%20120710
\documentclass[11pt,a4paper,reqno]{amsart}
\usepackage[all]{xy}
\usepackage{amsmath,amsfonts,amssymb,amscd}
\usepackage{soul,color}
\setcounter{tocdepth}{1}

\setlength{\textwidth}{15.2cm}
\setlength{\oddsidemargin}{0pt}
\setlength{\evensidemargin}{0pt}

\theoremstyle{plain}
\newtheorem{theorem}{Theorem}[section]
\newtheorem{mytheorem}{Theorem}[subsection]
\newtheorem{corollary}[mytheorem]{Corollary}%[subsection]
%[subsection]
\newtheorem{lemma}[mytheorem]{Lemma}%[section]
\newtheorem{proposition}[mytheorem]{Proposition}%[section]
\newtheorem{definition}[mytheorem]{Definition}%[section]
\newtheorem{remark}[mytheorem]{Remark}

\font\cyr wncyr10 at 11pt \def\Sha{\hbox{\cyr X}}

\def\tF{\textsc{F}}

\def\AA{{\mathbb A}}
\def\CC{{\mathbb C}}

\def\FF{{\mathbb F}}

\def\QQ{{\mathbb Q}}
\def\RR{{\mathbb R}}
\def\ZZ{{\mathbb Z}}

\def\A{{\mathcal A}}

%cannot use \def\H for {{\mathcal H}} it is an accent

%cannot use \def\L for {{\mathcal L}} it is a european letter
\def\cL{{\mathcal L}}
\def\M{{\mathcal M}}

%cannot use \O for {{\mathcal O}} it is a swedish letter
\def\cO{{\mathcal O}}
%cannot use \def\P for {{\mathcal P}} it is paragraph symbol

%cannot use \S which is used for section symbol
\def\cS{{\mathcal S}}

\newcommand{\fL}{\mathfrak{L}}

\newcommand{\fm}{\mathfrak{m}}

\newcommand{\kpinf}{{k}_{\infty}^{(p)}}
\newcommand{\ilim}{\varprojlim_n}
\newcommand{\Kpinf}{{K}_{\infty}^{(p)}}
\newcommand{\cal}{\mathcal}
\newcommand{\La}{\operatorname{\Lambda}}
\newcommand{\lra}{\longrightarrow}

\newcommand{\lr}{{\longrightarrow\;}}
\newcommand{\mbb}{\mathbb}

\newcommand{\dlim}{\varinjlim_n}

\def\+{{\dagger}}
\DeclareMathOperator{\Ker}{Ker}
\DeclareMathOperator{\Coker}{Coker}

\DeclareMathOperator{\coh}{H}

\DeclareMathOperator{\Fr}{Fr}

\DeclareMathOperator{\Gal}{Gal}

\DeclareMathOperator{\image}{Im}
\DeclareMathOperator{\Sel}{Sel}

\DeclareMathOperator{\Hom}{Hom}

\DeclareMathOperator{\rank}{rank}

\definecolor{purple}{rgb}{0.7,0,1}

\begin{document}
\title[ ] {The Iwasawa Main conjecture for semistable abelian varieties over function fields }

\author[Lai] {King Fai Lai}
\address{School of Mathematical Sciences\\
Capital Normal University\\
Beijing 100048, China}
\email{kinglaihonkon@gmail.com}

\author[Longhi] {Ignazio Longhi}
\address{Department of Mathematical Sciences\\
Xi'an Jiaotong-Liverpool University\\
Suzhou Industrial Park, Suzhou  215123, China}
\email{Ignazio.Longhi@xjtlu.edu.cn}

\author[Tan]{Ki-Seng Tan}
\address{Department of Mathematics\\
National Taiwan University\\
Taipei 10764, Taiwan}
\email{tan@math.ntu.edu.tw}

\author[Trihan]{Fabien Trihan}
%\address{School of Mathematics\\
%University of Nottingham\\
%University Park, NG1 7RD, Nottingham, UK}
%\email{fabien.trihan@nottingham.ac.uk}
\address{College of Engineering, Mathematics and Physical Sciences\\
University of Exeter\\
North Park Road, Exeter, UK}
\email{f.trihan@exeter.ac.uk}

\subjclass[2000]{11S40 (primary), 11R23, 11R34, 11R42, 11R58, 11G05, 11G10 (secondary)}

\keywords{Abelian variety, Selmer group, Iwasawa theory, syntomic cohomology}

\begin{abstract} We prove the Iwasawa Main Conjecture over the arithmetic $\ZZ_p$-extension for semistable abelian varieties over function fields of characteristic $p>0$.
\end{abstract}

\maketitle

\tableofcontents

\section{Introduction}  \label{sec:intro}

We prove in this paper an important case of the Iwasawa Main Conjecture for abelian varieties over
function fields. We fix a prime number $p$ ($p=2$ is allowed). Let $K$ be a global field of characteristic $p$.
Let $\Kpinf$ be the unramified $\ZZ_p$-extension of $K$ (called the arithmetic extension of $K$), with $\Gamma:=\Gal(\Kpinf/K)$,
and write
$Q(\Lambda)$ for the fraction field of
 the Iwasawa algebra $\La :=\ZZ_p[[\Gamma]]$.
 Let $A$ be an abelian variety over $K$ with semistable reduction.
Let $X_p(A/\Kpinf)$ denote the Pontryagin dual of the Selmer group $\Sel_{p^\infty}(A/\Kpinf)$. Then $X_p(A/\Kpinf)$ is finitely generated over $\La$, hence we can define the characteristic ideal $\chi\big(X_p(A/\Kpinf)\big)$.
It is a principal ideal of $\Lambda$, and we let $c_{A/\Kpinf}\in\Lambda$ denote a generator, which is  unique up to elements in $\Lambda^{\times}$.

For $\omega$ a continuous character of $\Gamma$ and $T$ a finite set of places of $K$, let $L_T(A,\omega,s)$ denote the twisted Hasse-Weil $L$-function of $A$ with the local factors at $T$ taken away. In practise $T$ will consist of those places where $A$ has bad reduction.
Our  main theorem is the following:

\begin{theorem} \label{t:summary} There exists a ``$p$-adic $L$-function'' $\cL_{A/\Kpinf}\in Q(\La)$ such that for any continuous character
$\omega\colon\Gamma\rightarrow\CC^\times$, $\omega(\cL_{A/\Kpinf})$ is defined and
\begin{equation}\label{e:imc2interpolate}
\omega(\cL_{A/\Kpinf})= L_T(A,\omega,1).
\end{equation}
Furthermore,
\begin{equation}\label{e:imc3characteristic}
\mathcal{L}_{A/\Kpinf}\equiv \star_{A,\Kpinf}\cdot c_{A/\Kpinf}\mod\La^\times.
\end{equation}
\end{theorem}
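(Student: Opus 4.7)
The plan is to construct $\mathcal{L}_{A/\Kpinf}$ directly from a cohomological complex carrying a natural $\La$-action, and then to derive both \eqref{e:imc2interpolate} and \eqref{e:imc3characteristic} by comparing this complex with the dual Selmer group. Concretely, I would let $\mathcal{A}\to\X$ denote the N\'eron model of $A$ over the smooth projective curve $\X/\FF_q$ with function field $K$, and take flat (or log-syntomic) cohomology of the $p$-divisible group $\mathcal{A}[p^\infty]$ over the unramified $\ZZ_p$-tower $\X^{(p)}_\infty\to\X$. This should yield a perfect complex of $\La$-modules whose cohomology recovers $X_p(A/\Kpinf)$ in the limit. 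The $p$-adic $L$-function $\mathcal{L}_{A/\Kpinf}$ would then be defined as (a twist of) the determinant of $\Frob_q-1$ acting on this complex, or equivalently as the alternating product of characteristic elements of its cohomology groups.

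For the interpolation formula \eqref{e:imc2interpolate}, I would invoke Grothendieck's cohomological formula for $L$-functions of constructible sheaves on curves in characteristic $p$: specialising the Iwasawa-theoretic Frobenius complex at a continuous character $\omega\colon\Gamma\to\CC^\times$ should recover the characteristic polynomial of Frobenius on the $\omega$-twist of the Tate module of $A$, and its evaluation at $s=1$ should give $L_T(A,\omega,1)$ via Grothendieck--Ogg--Shafarevich. The bad places $T$ enter precisely because the N\'eron model fails to be proper there and the log/flat construction automatically removes the corresponding Euler factors. For \eqref{e:imc3characteristic}, I would then compute the $\La$-module structure of the same complex in two ways: by construction, its characteristic element equals $\mathcal{L}_{A/\Kpinf}$; on the other hand, a global duality in the style of Artin--Verdier, combined with a control theorem identifying $X_p(A/\Kpinf)$ with the appropriate cohomology, should express this element as $c_{A/\Kpinf}$ multiplied by explicit local Tamagawa-type terms supported on $T$. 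The fudge factor $\star_{A,\Kpinf}$ collects precisely those local discrepancies.

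The hard part I expect is the control of the local correction terms at the places of bad reduction: this requires a careful treatment of log-syntomic cohomology of the open N\'eron model and of the component groups of the special fibres, and it is here that the semistability hypothesis and the delicate case $p=2$ become essential. A secondary obstacle is verifying that the Iwasawa complex is in fact perfect over $\La$ and that $X_p(A/\Kpinf)$ carries no nontrivial pseudo-null $\La$-submodule, both of which are needed for the characteristic ideal to be a meaningful invariant and for $\mathcal{L}_{A/\Kpinf}$ to lie in $Q(\La)$ rather than in a more abstract determinant module.
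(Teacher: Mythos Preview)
Your overall architecture is close to the paper's: the construction does proceed via log-syntomic cohomology of the N\'eron model along the tower, and $\mathcal{L}_{A/\Kpinf}$ is indeed an alternating product of Frobenius determinants on a suitable cohomological object. But there is a genuine gap in your interpolation step, and your route to \eqref{e:imc3characteristic} is different from (and harder than) the paper's.

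\textbf{The gap in interpolation.} You invoke ``Grothendieck's cohomological formula for $L$-functions of constructible sheaves'' and speak of ``the characteristic polynomial of Frobenius on the $\omega$-twist of the Tate module of $A$''. This is $\ell$-adic reasoning, and it does not go through when $\ell=p$ in characteristic $p$: the $p$-adic Tate module only sees the \'etale part of $A[p^\infty]$ and misses the local-local and multiplicative pieces, so the Frobenius polynomial you obtain is not the Hasse--Weil $L$-function. The paper instead works with the log Dieudonn\'e crystal $D$ attached to $A$ and passes to the associated overconvergent $F$-isocrystal $D^\dagger$; the interpolation \eqref{e:imc2interpolate} then comes from the Etesse--Le Stum trace formula in rigid cohomology together with a K\"unneth argument identifying the $\omega$-twist. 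The crystalline/rigid framework is not optional here: it is the only cohomology that simultaneously carries the full $L$-function and admits a good $p$-adic Frobenius in characteristic $p$.

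\textbf{The route to \eqref{e:imc3characteristic}.} You propose Artin--Verdier duality plus a control theorem; the paper does something much more direct. Once one knows that the relevant crystalline cohomology modules $P^i_\infty$ are free over $\La[\frac{1}{p}]$ and that the syntomic groups $(N^i_\infty)^\vee$ are $\La$-torsion, the equality $\mathcal{L}_{A/\Kpinf}=f_{A/\Kpinf}$ is pure $\sigma$-linear algebra: for maps $g,h\colon M\to N$ of $\La$-modules with torsion kernel and cokernel, $\det_{Q(\La)}(g h^{-1})$ equals the ratio of characteristic elements $char(g)\cdot char(h)^{-1}$, and one applies this with $g=({\bf 1}-\varphi_i)^\vee$ and $h={\bf 1}_i^\vee$. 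The decomposition of $f_{A/\Kpinf}$ into $c_{A/\Kpinf}$ times $\star_{A,\Kpinf}$ then comes not from duality but from an explicit five-term exact sequence (from \cite{KT03}) relating $N^1_\infty$ to the Selmer group, the torsion of $A$, and the groups of components/connected special fibres at the bad places. Your concerns about perfectness and pseudo-null submodules are therefore beside the point: characteristic ideals of torsion $\La$-modules are well-defined regardless, and the argument never needs the complex to be perfect.
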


The interpolation formula \eqref{e:imc2interpolate} is proven in Theorems \ref{GIMC2}, while \eqref{e:imc3characteristic} is the content of Theorems \ref{GIMC3}. For the precise expression of $\star_{A,\Kpinf}$ we refer to Proposition \ref{p:summarize}.\\

\subsubsection{A closer look at our result}
In order to explain our results, we need first to introduce some cohomology groups and operators between them.

We write $C/\FF$ for  the smooth proper geometrically connected curve  which is the model of the function field $K$ over its field of constants $\FF$. Let $C_\infty:=C\times_{\FF} \kpinf$ (where $\kpinf$ denotes the $\ZZ_p$-extension of $\FF$) and $\pi\colon C_\infty\to C$ be the pro-\'etale covering with Galois group $\Gal(\Kpinf/K)$.

Let $\cal A$ denote the N\'eron model of $A$ over $C$. Let  $Z$ be the finite  set of points where $A$ has bad reduction. Denote by  $Lie(\A)$ the Lie algebra of $\A$. Let $L^i_\infty$ be the $i$th cohomology group of
$$\RR\Gamma\big(C_{\infty},\pi^*Lie(\A)(-Z))\big)\otimes^{\mbb{L}}\QQ_p/\ZZ_p.$$
Let $D$ be the covariant log Dieudonn\'e crystal  associated with $A/K$ as constructed in \cite[IV]{KT03}.
The syntomic complex ${\cal S}_D$ is the mapping fibre of ``$1-$Frobenius''  in the derived category of complexes of sheaves over $C_{\acute{e}t}$ (\cite[\S 5.8]{KT03}). Let $N^i_\infty$ be the $i$th cohomology group of 
$$\RR\Gamma(C_\infty,\pi^*\cS_D)\otimes^{\mbb{L}}\QQ_p/\ZZ_p.$$

\begin{theorem} For $i=0,1,2$ and $j=0,1$, $(N^i_\infty)^\vee$ and $(L^j_\infty)^\vee$ are finitely generated torsion $\La$-modules.
\end{theorem}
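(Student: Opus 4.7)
The plan is to reduce both assertions to finiteness of suitable cohomology groups on the base curve $C$, via the Cartan--Leray machinery for the pro-\'etale $\Gamma$-covering $\pi\colon C_\infty\to C$, and then to apply topological Nakayama over the local ring $\La$. Since $C_\infty = \varprojlim C_n$ is the limit of the finite \'etale $\ZZ/p^n$-covers $C_n = C\times_\FF k_n$, one has
$$\RR\Gamma(C,\F) \;\cong\; \RR\Gamma(\Gamma,\RR\Gamma(C_\infty,\pi^*\F))$$
for any reasonable coefficient object $\F$. Since $\Gamma\cong\ZZ_p$ has cohomological dimension one, the associated Hochschild--Serre spectral sequence collapses into short exact sequences; and because the two-term complex $\La\xrightarrow{\gamma-1}\La$ computing $\Gamma$-cohomology commutes with $-\otimes^{\mbb L}\QQ_p/\ZZ_p$, we obtain, after tensoring with $\QQ_p/\ZZ_p$, the sequences
\begin{equation*}
0\longrightarrow (M^{i-1})_\Gamma\longrightarrow H^i\bigl(\RR\Gamma(C,\F)\otimes^{\mbb L}\QQ_p/\ZZ_p\bigr)\longrightarrow (M^i)^\Gamma\longrightarrow 0,
\end{equation*}
where $M^\bullet$ denotes the cohomology of $\RR\Gamma(C_\infty,\pi^*\F)\otimes^{\mbb L}\QQ_p/\ZZ_p$.

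For $\F = Lie(\A)(-Z)$, the base cohomology $\RR\Gamma(C,\F)$ is a bounded complex of finite-dimensional $\FF$-vector spaces in degrees $0,1$, all killed by $p$; using the distinguished triangle $\ZZ_p\to\QQ_p\to\QQ_p/\ZZ_p$ and the vanishing $\RR\Gamma(C,\F)\otimes^{\mbb L}\QQ_p = 0$, one sees that $H^i(\RR\Gamma(C,\F)\otimes^{\mbb L}\QQ_p/\ZZ_p)$ is itself a finite-dimensional $\FF$-vector space (a shift of $H^{i+1}(C,\F)$). The display above then shows that $(L^j_\infty)^\Gamma$ is finite, and topological Nakayama upgrades this to finite generation of the compact $\La$-module $(L^j_\infty)^\vee$. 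Moreover, $(L^j_\infty)^\vee$ is annihilated by $p\in\La$, a regular non-unit, hence automatically $\La$-torsion.

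For $\F = \cS_D$, the same formal mechanism works once we feed in the input from \cite{KT03}, namely that $H^i(\RR\Gamma(C,\cS_D)\otimes^{\mbb L}\QQ_p/\ZZ_p)$ is finite for $i\le 2$: these groups are identified, up to finite error in adjacent degrees, with the $p$-primary Selmer and $\Sha$-type invariants of $A/K$, whose finiteness for a semistable abelian variety is the main arithmetic content of loc.~cit. Granted this, the display yields that $(N^i_\infty)^\Gamma$ is finite, whence so is $(N^i_\infty[p])^\Gamma$; topological Nakayama then gives the finite generation of $(N^i_\infty)^\vee$. Torsion-ness is then formal: a finitely generated $\La$-module with finite $\Gamma$-coinvariants has no free part, because $\La_\Gamma = \ZZ_p$ is infinite, and hence no rank.

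The principal obstacle is concentrated in the syntomic case, specifically in the finiteness of $H^i(\RR\Gamma(C,\cS_D)\otimes^{\mbb L}\QQ_p/\ZZ_p)$ for $i = 0,1,2$. Unlike the coherent case, the complex $\RR\Gamma(C,\cS_D)$ has cohomology that is finitely generated but not $p$-torsion over $\ZZ_p$, so finiteness after tensoring with $\QQ_p/\ZZ_p$ is a genuine arithmetic statement requiring both the semistability hypothesis on $A$ and the full power of the Kato--Trihan theory of syntomic cohomology of log Dieudonn\'e crystals. Once that finiteness on $C$ is in hand, the Iwasawa-theoretic conclusions on $C_\infty$ follow by the purely formal Cartan--Leray/Nakayama argument sketched above.
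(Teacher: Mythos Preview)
Your argument for $(L^j_\infty)^\vee$ is correct and close in spirit to the paper's: both exploit that $L^j_0$ is a finite $\FF$-vector space, and the paper's Lemma~\ref{linalg1} (the structure of $(M\otimes W(\kpinf))^\vee$) is essentially an explicit form of your Nakayama step.

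The syntomic case, however, contains a genuine error. You assert that $N^i_0 = H^i(\RR\Gamma(C,\cS_D)\otimes^{\mbb L}\QQ_p/\ZZ_p)$ is finite for $i\le 2$, attributing this to \cite{KT03}. This is false whenever $\rank_\ZZ A(K)>0$. By the exact sequence \eqref{e:alg-arithm0}, $N^1_0$ surjects onto $\Sel_{p^\infty}(A/K)$, which contains the divisible group $A(K)\otimes\QQ_p/\ZZ_p$ of corank $\rank A(K)$; similarly $N^2_0\simeq\Sel_{\ZZ_p}(A^t/K)^\vee$ has $\ZZ_p$-rank at least $\rank A(K)$ by \eqref{e:KT253}. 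The paper \cite{KT03} does \emph{not} prove unconditional finiteness of Selmer groups or of $\Sha$; finiteness of $\Sha$ is the Birch--Swinnerton-Dyer conjecture, which \cite{KT03} establishes only conditionally. Consequently $(N^1_\infty)^\Gamma$, being a quotient of $N^1_0$ via Hochschild--Serre (cf.\ \eqref{ses0}), has in general positive $\ZZ_p$-corank---the paper itself records $\rank_{\ZZ_p}((N^1_\infty)^\vee)_\Gamma=\rank_{\ZZ_p}X_p(A/K)$ in Lemma~\ref{N02}. Your Nakayama argument therefore yields finite generation of $(N^1_\infty)^\vee$ (since $N^1_0$ is at least cofinitely generated over $\ZZ_p$), but the torsion conclusion collapses.

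The paper circumvents this by never invoking finiteness at level~$0$. Instead it uses the crystalline base change $M^i_{k,\infty}\simeq M^i_{k,0}\otimes W(\kpinf)$ and Lemma~\ref{linalg1} to show that $(M^i_{1,\infty})^\vee$ and $(M^i_{2,\infty})^\vee$ are finitely generated $\La$-modules of the \emph{same} rank (Corollary~\ref{M,L}). Torsion-ness of $(N^i_\infty)^\vee$ is then extracted from the long exact sequence \eqref{L1} via a semilinear input: Lemma~\ref{sblem-section2} shows that $1-\varphi\otimes\Fr_q$ on $V\otimes W(\kpinf)$ is surjective with finite-dimensional $\QQ_p$-kernel for any finite-dimensional $\QQ_p$-space $V$ with linear endomorphism $\varphi$. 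This Frobenius-eigenvalue statement, not any finiteness of Selmer groups, is what makes the argument unconditional.
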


\noindent Here ${}^\vee$ denotes the Pontryagin dual. The proof shall be given in Corollaries \ref{M,L} and \ref{1case}. Note that, by \cite{KT03}, $X_p(A/\Kpinf)$ as a submodule of $(N^1_\infty)^\vee$ is also $\La$-torsion.

Formula \eqref{e:charelement} in section \ref{su:charelem} will define $f_{A/\Kpinf}\in Q(\La)/\La^\times$ as the alternating product of the characteristic elements associated with the $(N^i_\infty)^\vee$ and $(L^j_\infty)^\vee$.
Because of the relation between $X_p(A/\Kpinf)$ and $(N^1_\infty)^\vee$, we can write
$$f_{A/K_{\infty}^{(p)}}=\star_{A,K_{\infty}^{(p)}}\cdot c_{A/K_{\infty}^{(p)}}\,$$
where $\star_{A,K_{\infty}^{(p)}}$ consist of terms whose arithmetic meaning is explained in Proposition \ref{p:summarize}.

We define the $p$-adic $L$-function $\cL_{A/\Kpinf}$ as the alternating product of determinants of the action of ``$1-$Frobenius'' on the log crystalline cohomology of $D(-Z)$ (see Section \ref{su:padicL} for the precise expression). Theorem \ref{GIMC2} proves that $\cL_{A/\Kpinf}$ satisfies the interpolation formula \eqref{e:imc2interpolate}, with $T=Z$.

Finally, we can state our analogue of the Iwasawa Main Conjecture:

\begin{theorem}\label{GIMC3} Let $A$ be an
abelian variety with at worst semistable reduction relative to the arithmetic extension $\Kpinf/K$.
We have the following equality in $Q(\La)^\times/\La^\times${\em :}
$$\cL_{A/\Kpinf}=f_{A/\Kpinf}$$
\end{theorem}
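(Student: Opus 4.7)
The strategy is to show that both sides of the desired equality are computed by a single distinguished triangle in $D(C_{\acute{e}t})$, arising from the definition of $\cS_D$ as the mapping fibre of $(1-\text{Frobenius})$. By construction there is a tautological triangle
$$\cS_D\lra E_1\xrightarrow{1-\phi} E_2\lra\cS_D[1]$$
whose outer terms are built from the log Dieudonn\'e crystal $D(-Z)$, and the Hodge filtration on $D(-Z)$ inserts $Lie(\A)(-Z)$ into this picture. Combining these pieces yields a compatible diagram of triangles in $D(C_{\acute{e}t})$ simultaneously involving $\cS_D$, $Lie(\A)(-Z)$, and a complex computing the log crystalline cohomology of $D(-Z)$ with its Frobenius action.

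The next step is to apply $\RR\Gamma(C_\infty,\pi^*-)\otimes^{\mbb L}\QQ_p/\ZZ_p$ to this diagram. The resulting long exact sequences simultaneously involve $N^i_\infty$, $L^j_\infty$, and analogously defined groups $H^i_\infty$ on which $1-\phi$ acts. By the previous theorem, extended to cover $H^i_\infty$ by the same arguments used for $N^i_\infty$, all of the Pontryagin duals are finitely generated $\La$-torsion modules, so characteristic ideals are defined and multiplicative along the long exact sequences. This identifies $f_{A/\Kpinf}$ with an alternating product of characteristic elements of $(\ker(1-\phi))^\vee$ and $(\coker(1-\phi))^\vee$ on the groups $H^i_\infty$.

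The final step, which I expect to be the main obstacle, is to show that for each $i$ the quotient $\chi((\ker(1-\phi))^\vee)/\chi((\coker(1-\phi))^\vee)$ agrees with the determinant of $1-\phi$ acting on an integral $\La$-model of the log crystalline cohomology of $D(-Z)$. This requires (i) lifting the relevant cohomology on $C_\infty$ to a perfect complex of $\La$-modules compatible with $\phi$, (ii) identifying this lift with the Pontryagin duals of the discrete groups $H^i_\infty$ coming from $\otimes^{\mbb L}\QQ_p/\ZZ_p$, and (iii) invoking the Knudsen--Mumford determinant formalism to interpret $\det(1-\phi)$ as an element of $Q(\La)^\times/\La^\times$ when the underlying $\La$-module need not be torsion. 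Once this identification is established, the alternating product of such determinants is, by the definition of $\cL_{A/\Kpinf}$ recalled in the introduction, exactly the $p$-adic $L$-function, and Theorem \ref{GIMC3} follows.
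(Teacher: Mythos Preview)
Your overall architecture is right: one unwinds the two long exact sequences coming from the triangles \eqref{T1} and \eqref{T2} (your ``$H^i_\infty$'' are the paper's $M^i_{1,\infty}$ and $M^i_{2,\infty}$), expresses $f_{A/\Kpinf}$ as an alternating product of characteristic elements of kernels and cokernels of $({\bf 1}-\varphi_i)^\vee$ and of ${\bf 1}_i^\vee$, and then matches this with the determinantal definition of $\cL_{A/\Kpinf}$. That is exactly what the paper does.

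However, there is a misconception in your middle paragraph and, partly as a consequence, you vastly overestimate the final step. You write that ``all of the Pontryagin duals are finitely generated $\La$-torsion modules''. This is false for the crystalline pieces: by Corollary~\ref{M,L}(1), $(M^i_{k,\infty})^\vee$ has $\La$-rank $r_i=\rank_{\ZZ_p}P^i_0$, which is typically positive. What \emph{is} torsion is $\Ker$ and $\Coker$ of $({\bf 1}-\varphi_i)^\vee$ (and of ${\bf 1}_i^\vee$); that is the content of Proposition~\ref{0case} and Corollary~\ref{1case}. So you cannot simply multiply characteristic ideals along the long exact sequence of the $M$'s; you must instead work with the invariant $char(g):=f_{\Coker g}\cdot f_{\Ker g}^{-1}$ attached to a map $g$ between (possibly non-torsion) finitely generated $\La$-modules with torsion kernel and cokernel.

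Once you set things up this way, no Knudsen--Mumford formalism, perfect complexes, or delicate comparison of integral models is needed. The module on which the determinant is taken is already explicit and free: $P^i_\infty:=(M^i_{2,\infty})^\vee[1/p]$ is free over $\La[1/p]$ of rank $r_i$ (Corollary~\ref{M,L}(1) plus Lemma~\ref{linalg1}), and the operator is $\Phi_i:=({\bf 1}_i^\vee[1/p])^{-1}\circ(\varphi_i^\vee[1/p])$, so that $id-\Phi_i=({\bf 1}_i^\vee)^{-1}\circ({\bf 1}-\varphi_i)^\vee$ after inverting $p$. The entire comparison then reduces to the elementary linear-algebra fact that for two maps $g,h\colon M\to N$ of finitely generated $\La$-modules with torsion kernel and cokernel one has
\[
{\det}_{Q(\La)}\big(g_{Q(\La)}\,h_{Q(\La)}^{-1}\big)=char(g)\cdot char(h)^{-1}\quad\text{in }Q(\La)^\times/\La^\times.
\]
Applying this with $g=({\bf 1}-\varphi_i)^\vee$ and $h={\bf 1}_i^\vee$ for each $i$ and taking the alternating product gives $\cL_{A/\Kpinf}=f_{A/\Kpinf}$ immediately. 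Your steps (i)--(iii) are therefore unnecessary; the ``main obstacle'' dissolves once you realize the determinant side is already posed on a free $\La[1/p]$-module and the algebraic side is captured by the ratio $char(g)/char(h)$ rather than by characteristic ideals of the $M$-groups themselves.
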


\noindent The proof is based on a generalization of a lemma of $\sigma$-linear algebra that was used to prove the cohomological formula of the Birch and Swinnerton-Dyer conjecture (see \cite[Lemma 3.6]{KT03}).

Section \ref{su:Eulerchar} investigates the consequences of Theorem \ref{GIMC3} in the direction of a $p$-adic Birch and Swinnerton-Dyer conjecture. The following result can be seen as a geometric analogue of the conjecture of Mazur-Tate-Teitelbaum (\cite{mtt86}):

\begin{theorem} Assume that $A/K$ has semistable reduction. Then
$$ord\big(\cL_{A/\Kpinf}\big)=ord_{s=1}\big(L_Z(A,s)\big)\geq\rank_\ZZ A(K)\,.$$
If moreover $A/K$ verifies the Birch and Swinnerton-Dyer Conjecture, the inequality above becomes an equality and
$$|L(\cL_{A/\Kpinf})|_p^{-1}\equiv  c_{BSD}\cdot |(N_\infty^2)_\Gamma| \mod \ZZ_p^\times \,,$$
where $c_{BSD}$ is the leading coefficient at $s=1$ of $L_Z(A,s)$.
\end{theorem}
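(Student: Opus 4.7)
The plan is to combine Theorem \ref{GIMC3} (the IMC) with the interpolation formula \eqref{e:imc2interpolate} to reduce each assertion to a statement about the analytic $L$-function at $s=1$, with the IMC converting characteristic-ideal information into $L$-function information.

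For the first equality, the key fact is that $\Kpinf/K$ is the arithmetic extension, so every continuous character $\omega\colon\Gamma\to\CC^\times$ is determined by the value $\zeta:=\omega(\gamma)\in\mu_{p^\infty}$ on a topological generator $\gamma$ (corresponding to $\Frob_\FF$), and under $\La\cong\ZZ_p[[T]]$ with $T=\gamma-1$ evaluation at $\omega$ is the substitution $T=\zeta-1$. Since $\omega(\Frob_v)=\zeta^{\deg v}$ for every $v\notin Z$, writing $\ell(u)$ for the function with $L_Z(A,s)=\ell(q^{-s})$, a direct Euler-product manipulation yields $L_Z(A,\omega,1)=\ell(\zeta/q)$. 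Substituting into \eqref{e:imc2interpolate} and Taylor-expanding around $\zeta=1$, one sees that $\cL_{A/\Kpinf}$, viewed as a series in $T$, has the same order of vanishing at $T=0$ as $\ell(u)$ at $u=q^{-1}$, which is precisely $\ord_{s=1}L_Z(A,s)$.

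For the inequality $\ord_{s=1}L_Z(A,s)\geq\rank_\ZZ A(K)$, one may cite the classical ``geometric half'' of BSD in characteristic $p$ (a theorem of Schneider/Tate), or derive it internally: by Theorem \ref{GIMC3}, $\ord(\cL_{A/\Kpinf})=\ord(f_{A/\Kpinf})$ is an alternating sum of $T$-adic orders of characteristic ideals of $(N^i_\infty)^\vee$ and $(L^j_\infty)^\vee$. For any finitely generated torsion $\La$-module $M$ the structure theorem gives $\ord_T(\chi(M))\geq\rank_{\ZZ_p}M^\Gamma$, and the Kummer embedding $A(K)\otimes\QQ_p/\ZZ_p\hookrightarrow\Sel_{p^\infty}(A/K)$, combined with the control theorem relating $X_p(A/K)$ to $X_p(A/\Kpinf)^\Gamma$, propagates $\rank_\ZZ A(K)$ into the contribution of the $(N^1_\infty)^\vee$ term, after checking that the remaining terms of $f_{A/\Kpinf}$ contribute non-negatively.

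Under BSD the inequality becomes equality, forcing equality at each intermediate step; in particular the Selmer-related $\La$-modules become pseudo-isomorphic to direct sums of copies of $\La/T$. The leading coefficient $L(\cL_{A/\Kpinf}):=(\cL_{A/\Kpinf}/T^r)|_{T=0}$, with $r=\ord(\cL_{A/\Kpinf})$, can then be computed in two ways. Taylor-expanding $\ell(\zeta/q)$ at $\zeta=1$ expresses $|L(\cL_{A/\Kpinf})|_p^{-1}$ in terms of the $p$-part of $c_{BSD}$ (with any transcendental factors such as powers of $\log q$ absorbed modulo $\ZZ_p^\times$), while the standard leading-term formula for characteristic elements of torsion $\La$-modules, applied via Theorem \ref{GIMC3}, expresses the same quantity as an alternating product of the orders $|(N^i_\infty)_\Gamma|$ and $|(L^j_\infty)_\Gamma|$. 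Invoking the cohomological BSD formula of \cite{KT03} identifies each of these coinvariant orders with a specific arithmetic invariant (regulator, $\Sha$, Tamagawa numbers) entering $c_{BSD}$; all factors cancel except $|(N^2_\infty)_\Gamma|$, giving the claimed congruence. The main obstacle is this last bookkeeping step: matching each individual $|(N^i_\infty)_\Gamma|$ and $|(L^j_\infty)_\Gamma|$ with the correct BSD invariant, and tracking the signs in the alternating product so that the cancellation produces exactly the stated formula.
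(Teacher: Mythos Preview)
Your approach to the first equality is legitimate and differs from the paper's. The paper does not pass through the interpolation formula; instead it writes $\cL_{A/\Kpinf}$ explicitly as $\prod_{i,j}(1-\alpha_{ij}\Fr_q)^{(-1)^{i+1}}$ (formula \eqref{e:detfr}), observes that each factor $1-\alpha_{ij}\Fr_q$ has order $1$ at the trivial character if and only if $\alpha_{ij}=1$, and then quotes \cite[3.5.2]{KT03} to identify the number of such eigenvalues with $\ord_{s=1}L_Z(A,s)$. Your route---deduce from interpolation that $\cL_{A/\Kpinf}$ and $u\mapsto\ell(u/q)$ agree as rational functions of $\Fr_q$ and then compare orders at $\Fr_q=1$---also works, but the phrase ``Taylor-expanding around $\zeta=1$'' is misleading: the $p$-power roots of unity do not accumulate $p$-adically at $1$, so what you really use is that two rational functions agreeing at infinitely many points are equal.

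For the leading-term formula your proposal has a genuine gap. First, your ``Route 1'' cannot stand on its own: the factor relating the leading coefficient of $\ell(u)$ at $u=q^{-1}$ to the leading coefficient of $L_Z(A,s)$ at $s=1$ is a power of $\log q$, a transcendental \emph{real} number, so ``absorbed modulo $\ZZ_p^\times$'' is meaningless; and if Route 1 really produced $|L(\cL)|_p^{-1}\equiv_p c_{BSD}$ directly, the extra factor $|(N^2_\infty)_\Gamma|$ in the statement would be forced to equal $1$, which need not hold. Second, your ``Route 2'' (via the IMC and Euler characteristics) is the paper's actual argument, but you omit its central ingredient. The standard leading-term formula (Lemma \ref{recall}) gives the \emph{generalized} Euler characteristic $|\Coker(g_M)|/|\Ker(g_M)|$, not simply $|M_\Gamma|$, and for $M=(N^1_\infty)^\vee$ one must first prove that $g_{N^1_\infty}$ is a quasi-isomorphism and then compute its characteristic. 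The paper does this by identifying (Lemma \ref{cup}) the composite $N^1_0\to(N^1_\infty)^\Gamma\xrightarrow{g}(N^1_\infty)_\Gamma\to N^2_0$ with a cup-product map, which by \cite[\S3.3.6.2, \S6.8]{KT03} coincides with the N\'eron--Tate height pairing; non-degeneracy of the pairing yields the quasi-isomorphism, and the diagram \eqref{e:relhp} then expresses $char(g_{N^1_\infty})$ in terms of $|\Sha|$, the discriminant $Disc(h_{A/K})$, torsion orders, and $|(N^0_\infty)_\Gamma|$, $|(N^2_\infty)^\Gamma|$. Substituting these into Theorem \ref{chi-Iwasawa-BSD} and comparing with the BSD expression for $c_{BSD}$ is what produces the surviving factor $|(N^2_\infty)_\Gamma|$. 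Your sketch (``all factors cancel except $|(N^2_\infty)_\Gamma|$'') presupposes exactly this computation without indicating how to carry it out.
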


\noindent Here $ord$ denotes the ``analytic rank'' and $L$ the ``leading coefficient'' of power series in $\La$ (see \S\ref{ss:twistEuler} for precise definitions). The proof will be provided in Theorem \ref{euchar}. Proposition \ref{p:N2trivial} will prove that very often the error term $|(N_\infty^2)_\Gamma|$ is just 1.

It seems difficult at the present to remove the semistable hypothesis and will require a delicate study of the integral $p$-adic cohomology computed over (wildly) ramified extensions.

Finally we would like to mention to the readers \cite{LLTT1} for a proof of the Iwasawa Main conjecture for constant ordinary abelian varieties over $\ZZ_p^d$- extensions ramifying at a finite set of places (the proof is using \cite{LLTT2} giving a functional equation of the Selmer groups of abelian varieties over general global fields) and \cite{LLTT3}, for a case of non-torsion Selmer group where the Main conjecture  nevertheless holds.
%%%%%%%%%%%%%%%%%%%%%%%%%%%%%%%%%%%%%%%%%%%%%%%%%%%%%%%%%%%%%%%%%%%%%%%%%%%%%%

%We end Part \ref{part:semistable} by generalizing the constructions of chapter 9 and 10 to arbitrary log Dieudonn\'e crystals and by considering several open questions.

\begin{subsubsection}*{Acknowledgements} The fourth author has been supported by EPSRC. He would like also to express his gratitude to Takeshi Saito for his hospitality at the University of Tokyo where part of this work has been written. Authors 2, 3 and 4 thank Centre de Recerca Matem\`atica for hospitality while working on part of this paper.
Authors 1, 2 and 3 have been partially supported by the National Science Council of Taiwan, grants NSC98-2115-M-110-008-MY2, NSC100-2811-M-002-079 and NSC99-2115-M-002-002-MY3 respectively. Finally, it is our pleasure to thank NCTS/TPE for supporting a number of meetings of the authors in National Taiwan University. \end{subsubsection}

\section{Syntomic cohomology of abelian varieties} \label{s:genord}

\subsubsection{The arithmetic tower} We fix the notations. For any $n\geq 0$, let $k_n/\FF$ be the
$\ZZ/p^n\ZZ$-extension of $\FF$ and let $\kpinf:=\cup_{n\geq 0} k_n$ denote the induced $\ZZ_p$-extension of $\FF$. Thus our tower becomes $K_n:=Kk_n$ and $\Kpinf:=K\kpinf$.
Since they are canonically isomorphic, we identify $\Gamma$, $\Gamma_n$ and $\Gamma^{(n)}$ with $\Gal(\kpinf/\FF)$, $\Gal(k_n/\FF)$ and $\Gal(\kpinf/k_n)$ respectively. We denote by $\Fr_q$ the generator of $\Gal(\kpinf/\FF)$, $x\mapsto x^q$.

Let $C/\FF$ denote the smooth proper geometrically connected curve which is the model of $K$ over $\FF$. Let $C_\infty:=C\times_{\FF} \kpinf$ and $C_n:=C\times_{\FF}k_n$. Let $\pi\colon C_\infty\to C$ and $\pi_n\colon C_n\to C$ denote the (pro) \'etale covering with Galois group $\Gamma$ and $\Gamma_n$ respectively. By abuse of notation, we will also denote by $\pi$ and  $\pi_n$ the associated morphisms in the log crystalline topos (\cite{bbm82}).

\subsection{The cohomology} \label{su:cohom}

\subsubsection{The Dieudonn\'e crystal} Let $\cal A$ denote the N\'eron model of $A$ over $C$. Let $U$ be the dense open subset of $C$ where $A$ has good reduction and $Z:=C- U$ the finite (possibly empty) set of points where $A$ has bad (at worst semistable) reduction. We endow $C$ with the log structure induced by the smooth divisor $Z$ and denote $C^\#$ this log-scheme. Let $D$ be the (covariant) log Dieudonn\'e crystal over $C^\#/W(\FF)$ associated with $A/K$ as constructed in \cite[IV]{KT03}. Recall the
following theorem of \cite{KT03}:

%%%%%%%%%%%%%%%%%%%%%%%%%%%%%%%%%%%%%%%%%%%%%%%%%%%%%%%%%%%%%%%%%%%
\begin{mytheorem} \label{Lie(D)} {\em(}\cite[\S 5.4(b) and \S 5.5]{KT03}{\em)} Let $i$ be the canonical morphism of topoi of \cite{bbm82} from the topos of sheaves on $C_{{\acute{e}t}}$ to the log crystalline topos $\left(C^\#/W(\FF)\right)_{crys}$.
There exists a surjective map of sheaves $D\to i_*(Lie({\cal A}))$ in $\left(C^\#/W(\FF)\right)_{crys}$.
\end{mytheorem}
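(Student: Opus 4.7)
The plan is to produce the surjection by evaluating $D$ on the trivial PD-thickening of $C^\#$, obtaining a coherent $\cO_C$-module $D_{C^\#}$ endowed with a Hodge filtration and an integrable log connection, and then to exhibit an $\cO_C$-linear surjection $D_{C^\#} \twoheadrightarrow Lie(\A)$ which is horizontal. Horizontality is what is needed to upgrade the $\cO_C$-linear map into a morphism of crystals into $i_*(Lie(\A))$, because the latter carries the trivial connection by its very construction from a sheaf on the small \'etale site of $C$: a morphism of crystals $D\to i_*(Lie(\A))$ is the same as a horizontal map on the trivial thickening.

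First I would work on the good reduction locus $U := C \setminus Z$. On $U$ the log structure is trivial and $D|_U$ is canonically the covariant crystalline Dieudonn\'e module of the $p$-divisible group $\A[p^\infty]|_U$. Classical crystalline Dieudonn\'e theory (Berthelot--Breen--Messing, together with Mazur--Messing in the covariant formulation) furnishes the Hodge short exact sequence
$$0 \longrightarrow \omega_{\A^t/U} \longrightarrow D_{C^\#}|_U \longrightarrow Lie(\A)|_U \longrightarrow 0,$$
which supplies both the surjection and its horizontality over $U$. Across the semistable locus $Z$, the connected component of the N\'eron model sits in a Raynaud-type uniformization $0 \to T \to \A^0 \to B \to 0$ with $T$ of multiplicative type and $B$ abelian, and the log structure on $C^\#$ is precisely the one that makes $T[p^\infty]$ admit a log Dieudonn\'e crystal. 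Assembling $D$ from these constituents as in \cite[IV]{KT03} and passing to Lie algebras of the filtered pieces gives a surjection onto an extension of $Lie(B)$ by $Lie(T)$, which is canonically $Lie(\A)$ by left-exactness of the Lie functor on smooth group schemes.

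The main obstacle is precisely this second step: producing a Hodge-filtered log Dieudonn\'e object at points of semistable reduction that is simultaneously compatible with the Raynaud description and with the N\'eron model's Lie algebra, which requires a careful analysis of the log structure and of the extension data encoded in the degeneration. Once this is in hand, the surjections over $U$ and over neighborhoods of each $z \in Z$ glue by the uniqueness, up to canonical isomorphism, of the log Dieudonn\'e crystal attached to a semistable abelian variety; the resulting map $D_{C^\#} \twoheadrightarrow Lie(\A)$ is horizontal with respect to the log connection and therefore defines the sought-for morphism $D \to i_*(Lie(\A))$ in $(C^\#/W(\FF))_{\mathrm{crys}}$.
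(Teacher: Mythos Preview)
The paper does not give a proof of this statement: it is quoted verbatim as a result of Kato--Trihan, with the citation \cite[\S 5.4(b) and \S 5.5]{KT03} embedded in the theorem heading itself, and the text immediately moves on to use it. So there is no ``paper's own proof'' to compare your proposal against.

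Your sketch is a plausible outline of how the construction in \cite{KT03} goes: evaluate $D$ on the trivial thickening, invoke the Hodge filtration from crystalline Dieudonn\'e theory over the good-reduction locus $U$, and patch across $Z$ using the Raynaud extension and the log structure. That said, the step you flag as the ``main obstacle'' is genuinely the entire content of the theorem, and your proposal does not resolve it so much as restate it. In particular, the assertion that the surjections over $U$ and over punctured neighborhoods of $z\in Z$ glue ``by uniqueness'' hides the real work: one must first construct the log Dieudonn\'e crystal $D$ itself at the bad fibres (this is \cite[IV]{KT03}, not something available off the shelf), and the Hodge filtration with the correct graded piece $Lie(\A)$ is built into that construction rather than deduced from it afterwards. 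So your proposal is circular at the crucial point unless you import the full content of \cite[\S5.4--5.5]{KT03}, which is exactly what the paper does by citation.
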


%%%%%%%%%%%%%%%%%%%%%%%%%%%%%%%%%%%%%%%%%%%%%%%%%%%%%%%%%%%%%%%%%%%%%%%

\subsubsection{A distinguished triangle} We denote by $D^0$ the kernel of $D\rightarrow i_{\ast}(Lie({\cal A}))$ in the topos $(C^\#/W(\FF))_{crys}$. Let ${\bf 1}\colon D^0\rightarrow D$ be the natural inclusion. By applying the canonical projection $u_{\ast}$ from the log crystalline topos $(C^\#/W(\FF))_{crys}$ to the topos of sheaves on $C_{\acute{e}t}$, we get a distinguished triangle:
$$Ru_{\ast} D^0 \overset{\bf 1}{\lra}  Ru_{\ast} D\lra Lie({\cal A}).$$
We can twist this triangle by the divisor $Z$ to get a triangle:
\begin{equation}\label{T1}
Ru_{\ast} D^0 (-Z) \overset{\bf 1}{\lra}  Ru_{\ast} D (-Z) \lra Lie({\cal A})(-Z).
\end{equation}
where $D(-Z)$ is the twist of the log Dieudonn\'e crystal $D$ defined in \cite[\S 5.11]{KT03}.

\subsubsection{The syntomic complex} In \cite[\S 5.8]{KT03}, a Frobenius operator
$$\varphi\colon Ru_*D^0(-Z)\lra Ru_{\ast}D(-Z)$$
is constructed. We denote by ${\cS}_D$ the mapping fiber of the map
$${\bf 1}-\varphi\colon Ru_*D^0(-Z) \lra Ru_*D(-Z).$$
This complex is an object in the derived category of complexes of sheaves over $C_{\acute{e}t}$ and we have a distinguished triangle:
\begin{equation}\label{T2} \begin{CD}
\cS_D \lra Ru_{\ast} D^0(-Z) @>{{\bf 1}-\varphi}>> Ru_{\ast}D(-Z). \end{CD}
\end{equation}
%\begin{equation}\label{T2}
%\cS_D\lra Ru_{\ast} D^0(-Z) \overset{{\bf 1}-\varphi}{\lra} Ru_{\ast}D(-Z).
%\end{equation}

\subsubsection{The cohomology theories} \label{ss:cohomtheories}

%By exactness of the functor $\pi^*$ and $\pi_n^*$, we deduce from the distinguished triangles \eqref{T1} and \eqref{T2}, the following distinguished triangles

%$$\pi^*\cS_D\lra \pi^*Ru_{\ast} D^0(-Z)\overset{{\bf 1}-\varphi}{\lra} \pi^*Ru_{\ast}D(-Z),$$
%$$\pi^*Ru_{\ast} D^0 (-Z)\overset{\bf 1}{\lra}  \pi^*Ru_{\ast} D (-Z) \lra \pi^*Lie(D)(-Z),$$
%
%$$\pi_n^*\cS_D\lra \pi_n^*Ru_{\ast} D^0(-Z)\overset{{\bf 1}-\varphi}{\lra} \pi_n^*Ru_{\ast}D(-Z)$$
%and
%$$\pi_n^*Ru_{\ast} D^0 (-Z)
%\overset{\bf 1}{\lra}  \pi_n^*Ru_{\ast} D (-Z) \lra \pi_n^*Lie(D)(-Z).$$0
%Remark that the cohomology of the finite locally free module $Lie(D)(-Z)$ is the same in the \'etale or Zariski site.
%Using the (log) crystalline base change by the morphism of topoi in the big sites $\pi:(C_{}/W(\kpinf))_{crys}\to (C/W(\FF))_{crys}$ (\cite{Ka94}, 2.5.2) and the flat base change theorem on the Zariski site, we deduce from the latter distinguished triangle a base change isomorphism
% $$\pi^*Ru_{\ast}D^0 (-Z)\simeq Ru_{\ast} \pi^*D^0 (-Z).$$

%Similarly, for any $n\geq 0$, we have
%$$\pi_n^*Ru_{\ast}D^0 (-Z)\simeq Ru_{\ast} \pi_n^*D^0 (-Z).$$

We define the following modules:
\begin{enumerate}
%\item Let $P^i_\infty:=\coh^i_{\mrm{crys}}(C_{}/W(\kpinf),\pi^*D(-Z))$.
\item Let
$$P^i_n:=\coh^i_{\mathrm{crys}}(C_n^\#/W(k_n),\pi_n^*D(-Z))\,.$$
Then for any $n$, $P^i_n$ is a finitely generated $W(k_n)$-module endowed with a $\Fr_q$-linear operator $F_{i,n}$ induced by the Frobenius operator of the Dieudonn\'e crystal. Using the (log) crystalline base change by the morphism of topoi $\pi_n\colon(C_\infty^\#/W(k_n))_{crys}\to (C^\#/W(\FF))_{crys}$
(\cite[\S2.5.2]{Ka94}) and by flatness of the extensions $W(k_n)/\ZZ_p$, we have,
%$$P^i_\infty\simeq P^i_0\otimes W(\kpinf)$$and,
for $n\geq 1$,
\begin{equation} \label{e:pi0n} P^i_n\simeq P^i_0\otimes W(k_n)\,. \end{equation}
These isomorphisms identify the $\Fr_q$-linear operator $F_{i,n}$ on the left hand side with the $\Fr_q$-linear operator $F_{i,0}\otimes\Fr_q$ on the right hand side.
%, $P^i_\infty$ is a $W(\kpinf)-$ module of finite type.

\item Let $M_{1,\infty}^i$ be the $i$th cohomology group of $$\RR\Gamma_{crys}(C_\infty^\#/W(\kpinf),\pi^*D^0(-Z))\otimes^{\mbb{L}} \QQ_p/\ZZ_p.$$
\item Let $M_{2,\infty}^i$ be the $i$th cohomology group of $$\RR\Gamma_{crys}(C_\infty^\#/W(\kpinf),\pi^*D(-Z))\otimes^{\mbb{L}} \QQ_p/\ZZ_p.$$
\item Let $M_{1,n}^i$ be the $i$th cohomology group of $$\RR\Gamma_{crys}(C_n^\#/W(k_n),\pi_n^*D^0(-Z))\otimes^{\mbb{L}} \QQ_p/\ZZ_p.$$
\item Let $M_{2,n}^i$ be the $i$th cohomology group of $$\RR\Gamma_{crys}(C_n^\#/W(k_n),\pi^*_nD(-Z))\otimes^{\mbb{L}} \QQ_p/\ZZ_p.$$

Again by the base change theorem, we have, for $k=1,2$, an isomorphism of torsion $W(\kpinf)$-modules
\begin{equation} \label{e:mi0infty} M^i_{k,\infty}\simeq M^i_{k,0}\otimes W(\kpinf) \end{equation}
and for any $n\geq 0$ an isomorphism of torsion $W(k_n)$-modules $$M^i_{k,n}\simeq M^i_{k,0}\otimes W(k_n)$$ identifying the $\Fr_q$-linear operator ${\bf 1}-\varphi_{i,n}$ on the left
hand side with the $\Fr_q$-linear operator ${\bf 1}\otimes id-\varphi_{i,0}\otimes\Fr_q$ on the right hand side.
\item Let $L^i_\infty$ be the $i$th cohomology group of
$$\RR\Gamma\big(C_{\infty},\pi^*Lie(\A(-Z))\big)\otimes^{\mbb{L}}\QQ_p/\ZZ_p
=\RR\Gamma\big(C_\infty,\pi^*Lie(\A(-Z))\big)[1]\,.$$

\item Let $L^i_n$ be the $i$th cohomology group of
$$\RR\Gamma\big(C_n,\pi^*_nLie(\A(-Z))\big)\otimes^{\mbb{L}}\QQ_p/\ZZ_p=\RR\Gamma\big(C_n,\pi^*_n Lie(\A(-Z))\big)[1]\,.$$

By the Zariski base change formula (note that the cohomology of the finite locally free module $Lie({\A})(-Z)$ is the same in the \'etale or Zariski site), we have isomorphisms
$$L^i_\infty\simeq L^i_0\otimes W(\kpinf)$$
and, for any $n\geq 0$,
$$L^i_n\simeq L^i_0\otimes W(k_n).$$
In particular, since $L^i_0$ is a finite $\FF_p$-vector space with rank $d(L^i_0)$, we deduce that $L^i_\infty$ is a finite $\kpinf$-vector space while $L^i_n$ is a
finite $k_n$-vector space, both with the same rank $d(L^i_0)$.
%Comment:For M_{1,.} prove first the base changed for M_{2,.}and Lie and then deduce it for M_{1,.}
\item Let $$N^i_\infty := H^i_{syn}( C_\infty , \pi^*\mathcal{S}_D \otimes \QQ_p/\ZZ_p )$$
be the $i$th cohomology group of $$\RR\Gamma(C_\infty,\pi^*\cS_D)\otimes^{\mbb{L}}\QQ_p/\ZZ_p.$$
\item Let $N^i_n$ be the $i$th cohomology group of $$\RR\Gamma(C_n,\pi_n^*\cS_D)\otimes^{\mbb{L}}\QQ_p/\ZZ_p.$$
\end{enumerate}

The distinguished triangles (\ref{T1}) and (\ref{T2}) induce, by passing to the cohomology, the following long exact sequences:
\begin{gather}
\begin{CD} \label{L1} ... @>>> N^i_\infty @>>> M_{1,\infty}^i @>{\bf 1}-\varphi_{i,\infty}>> M_{2,\infty}^i @>>> ... \end{CD}\\
\begin{CD} \label{L2} ... @>>> L^i_\infty @>>> M_{1,\infty}^i @>{\bf 1}>> M_{2,\infty}^i @>>> ...  \end{CD}
\end{gather}
which are inductive limits of the long exact sequences
\begin{gather}
\begin{CD} \label{L1n} ... @>>> N^i_n @>>>  M_{1,n}^i @>{{\bf 1}-\varphi_{i,n}}>> M_{2,n}^i @>>> ...  \end{CD}\\
\begin{CD} \label{L2n} ... @>>> L^i_n @>>>  M_{1,n}^i @>{\bf 1}>> M_{2,n}^i @>>> ...  \end{CD}
\end{gather}
%Comment: more precisely apply the derived functor Rf_* (where $f$ is the composed map $f:C_\infty\to Spec(\kpinf)\to Spf(W(\kpinf})$) and use the base change theorem of Kato
Note that the cohomology theories $M$ and $N$ are concentrated in degrees 0,1 and 2 and the cohomology theory $L$ is concentrated in degrees 0 and 1.\\

\begin{mytheorem}\label{GIMC1} For $i=0,1,2$ and $j=0,1$, the Pontryagin duals of $N^i_\infty$ and $L^j_\infty$ are finitely generated torsion $\La$-modules.
\end{mytheorem}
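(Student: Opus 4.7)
The plan is to treat $(L^j_\infty)^\vee$ via a direct Nakayama argument, and to reduce the case of $(N^i_\infty)^\vee$ through the long exact sequence \eqref{L1} to a $\sigma$-linear algebra analysis of the kernel and cokernel of the operator $1-\varphi_{i,\infty}$ on $M^j_{k,\infty}$.

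For $L^j_\infty$, the isomorphism $L^j_\infty\simeq L^j_0\otimes_{\FF_p}\kpinf$ (which follows from the stated $L^j_\infty\simeq L^j_0\otimes W(\kpinf)$ together with the fact that $L^j_0$ is killed by $p$) shows that $L^j_\infty$ is annihilated by $p$ and has $\Gamma$-invariants $L^j_0\otimes_{\FF_p}(\kpinf)^\Gamma = L^j_0\otimes_{\FF_p}\FF$, a finite $\FF_p$-vector space. By Pontryagin duality the $\Gamma$-coinvariants of $(L^j_\infty)^\vee$ are then finite, so Nakayama's lemma for the compact local ring $\La$ yields that $(L^j_\infty)^\vee$ is finitely generated over $\La$; being $p$-torsion, it is automatically $\La$-torsion.

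For $N^i_\infty$, the long exact sequence \eqref{L1} provides a short exact sequence
\begin{equation*}
0\to \coker\bigl(1-\varphi_{i-1,\infty}\bigr)\to N^i_\infty\to \ker\bigl(1-\varphi_{i,\infty}\bigr)\to 0,
\end{equation*}
so, since finitely generated torsion $\La$-modules are closed under extensions, it suffices to prove that $\ker(1-\varphi_{j,\infty})^\vee$ and $\coker(1-\varphi_{j,\infty})^\vee$ are finitely generated torsion over $\La$ for each $j$. Under the identification $M^j_{k,\infty}\simeq M^j_{k,0}\otimes_{W(\FF)}W(\kpinf)$ the operator $1-\varphi_{j,\infty}$ corresponds to $1\otimes\mathrm{id}-\varphi_{j,0}\otimes\Fr_q$, where crucially the action of $\Fr_q$ on $W(\kpinf)$ coincides with that of a topological generator of $\Gamma$, coupling the Frobenius structure to the Iwasawa $\Gamma$-action. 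A generalization of the $\sigma$-linear algebra lemma \cite[Lemma 3.6]{KT03} then shows that the kernel and cokernel of $1\otimes\mathrm{id}-\varphi_{j,0}\otimes\Fr_q$ are cofinitely generated $\ZZ_p$-modules of finite $\ZZ_p$-corank; consequently their Pontryagin duals have finite $\ZZ_p$-rank, hence are finitely generated over $\La$ (being already finitely generated over $\ZZ_p$) and automatically $\La$-torsion, since a finitely generated $\La$-module of finite $\ZZ_p$-rank cannot have any $\La$-free summand.

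The main obstacle is this last step: the $\sigma$-linear analysis of $1\otimes\mathrm{id}-\varphi_{j,0}\otimes\Fr_q$ on the typically non-finite module $M^j_{k,0}\otimes_{W(\FF)}W(\kpinf)$. One must combine the $p$-adic slope decomposition of $M^j_{k,0}$ as an $F$-crystal over $W(\FF)$ with the Artin--Schreier-type surjectivity of $1-\Fr_q$ on $W(\kpinf)/p^m$, handling separately the divisible part $\bigl(W(\FF)\otimes\QQ_p/\ZZ_p\bigr)^{r}$ and the finite-order torsion part of $M^j_{k,0}$, so that the $\Fr_q$-twist provided by $\Gamma$ cuts the infinite $W(\kpinf)$-support down to finite $\ZZ_p$-corank on both $\ker$ and $\coker$.
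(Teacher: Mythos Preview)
Your treatment of $(L^j_\infty)^\vee$ is fine and matches the paper's argument (Lemma~\ref{linalg1} and Corollary~\ref{M,L}(2)).

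For $(N^i_\infty)^\vee$, your reduction via the short exact sequence to the kernel and cokernel of $1-\varphi_{i,\infty}$ is correct, but the central claim that these kernels and cokernels have \emph{finite $\ZZ_p$-corank} is false in general, and this is where the argument breaks. The modules $M^i_{k,0}$ are cofinitely generated over $\ZZ_p$ but typically not divisible: they carry a finite-exponent summand coming from torsion in integral crystalline cohomology (cf.\ Lemma~\ref{cofinite}(1), where the cokernel of $f_k$ is a finite group). After tensoring with $W(\kpinf)$ this summand becomes a module of type $W_n(\kpinf)$, whose Pontryagin dual is $\La/p^n\La$, an infinitely generated $\ZZ_p$-module. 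If, say, the finite parts of $M^i_{1,0}$ and $M^i_{2,0}$ differ, the cokernel of ${\bf 1}-\varphi_{i,\infty}$ will contain such a piece and will not be $\ZZ_p$-cofinitely generated. So neither your invocation of a ``generalized \cite[Lemma~3.6]{KT03}'' nor the Artin--Schreier surjectivity can yield the $\ZZ_p$-finiteness you assert; at best they control the divisible part. Note also that $M^i_{k,0}$ is a torsion $\ZZ_p$-module, not an $F$-crystal, so speaking of its ``slope decomposition'' is not quite right; the $F$-crystal is $P^i_0$.

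The paper's route is genuinely different and avoids this trap by never claiming $\ZZ_p$-finiteness. For $i=0$ it uses the arithmetic input that $(N^0_\infty)^\vee$ is a quotient of $A(\Kpinf)[p^\infty]^\vee$ (Proposition~\ref{0case}). For $i=1$ it passes to the \emph{integral} syntomic cohomology and splits $N^1_\infty$ via the sequence \eqref{alpha_beta} into $\image(\alpha)$, controlled by the $\QQ_p$-vector-space lemma (Lemma~\ref{sblem-section2}), and $\image(\beta)$, whose dual is $\La$-torsion simply because it is bounded $p$-power torsion inside a finitely generated $W(\kpinf)$-module (Lemma~\ref{beta}). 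The case $i=2$ then follows by chaining exact sequences (Corollary~\ref{1case}). Your approach could be repaired along similar lines: on the bounded-exponent part the $\La$-torsionness is automatic, and only on the divisible part do you need a $\sigma$-linear lemma; but as written the key step is a non-sequitur.
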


\noindent The proof shall be given in Corollaries \ref{M,L} and \ref{1case}. Note that, by \cite{KT03}, $X_p(A/\Kpinf)$ is a submodule of the dual of $N^1_\infty$, so Theorem \ref{GIMC1} shows that under our hypothesis $X_p(A/\Kpinf)$ is also a finitely generated torsion $\La$-module. This was already known by \cite{ot09}, whose argument is simplified in the present paper.

In \cite{KT03}, the following was proved (see \cite[\S3.3.3, \S3.3.4 and \S3.3.5]{KT03}):
\begin{lemma}\label{cofinite}
For $k=1$ or $2$, there exists a map
$$f_k\colon P^i_0[p^{-1}]\lr M^i_{k,0}$$
satisfying the following conditions:
\begin{enumerate}
\item The kernel of $f_k$ is a $\ZZ_p$-lattice in $P^i_0[\frac{1}{p}]$ and the cokernel is a finite group. In particular, $M^i_{1,0}$ and
  $M^i_{2,0}$ are torsion $\ZZ_p$-modules with the same finite corank.
\item The diagrams
    \[\begin{CD}
    P^i_0[\frac{1}{p}] @>{id}>> P^i_0[\frac{1}{p}] && \hspace{60pt} && P^i_0[\frac{1}{p}] @>{p^{-1}F_{i,0}}>> P^i_0[\frac{1}{p}]\\
    @V{f_1}VV @V{f_2}VV  \text{and} && @V{f_1}VV @V{f_2}VV \\
    M^i_{1,0} @>{{\bf 1}}>> M^i_{2,0} && \hspace{60pt} && M^i_{1,0} @>{\varphi_{i,0}}>> M^i_{2,0}
    \end{CD}\]
    commute.
\end{enumerate}
\end{lemma}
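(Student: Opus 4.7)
The plan is to build $f_k$ from the universal coefficient exact sequence for $-\otimes^{\mbb{L}}\QQ_p/\ZZ_p$, leveraging that the sheaf $Lie(\A)(-Z)$ on $C_{\acute{e}t}$ is a finite $\FF$-vector space and hence annihilated by $p$. Set $P^i_{0,1}:=\coh^i_{\mathrm{crys}}(C^\#/W(\FF),D^0(-Z))$, so that $P^i_{0,2}:=P^i_0$; both are finitely generated $\ZZ_p$-modules. Applying $\RR\Gamma_{\mathrm{crys}}(C^\#/W(\FF),\cdot)$ to the triangle (\ref{T1}) and using that the third terms $\coh^i(C,Lie(\A)(-Z))$ are killed by $p$, one sees that ${\bf 1}\colon D^0\to D$ induces an isomorphism $\iota\colon P^i_{0,1}[p^{-1}]\iso P^i_0[p^{-1}]$.

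Next, the universal coefficient theorem yields, for each $k$, a short exact sequence
$$0\lra P^i_{0,k}\otimes\QQ_p/\ZZ_p\lra M^i_{k,0}\lra P^{i+1}_{0,k}[p^\infty]\lra 0,$$
in which the rightmost term is finite because $P^{i+1}_{0,k}$ is finitely generated over $\ZZ_p$. The natural surjection $P^i_{0,k}[p^{-1}]\twoheadrightarrow P^i_{0,k}\otimes\QQ_p/\ZZ_p$ has kernel equal to the image of $P^i_{0,k}$ in $P^i_{0,k}[p^{-1}]$, a $\ZZ_p$-lattice. Define $f_2$ as the composition of this surjection with the above inclusion into $M^i_{2,0}$, and $f_1$ as the analogous composition precomposed with $\iota^{-1}$. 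Both kernels are then $\ZZ_p$-lattices in $P^i_0[p^{-1}]$, both cokernels are finite, and the common corank equals $\rank_{\ZZ_p}P^i_0$; this yields (1).

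For (2), the left-hand square follows from the naturality of the universal coefficient sequence applied to ${\bf 1}\colon D^0\to D$: this map induces $\iota$ on $P[p^{-1}]$'s and ${\bf 1}$ on $M$'s, and a routine diagram chase using $\iota\circ\iota^{-1}=\mathrm{id}$ gives ${\bf 1}\circ f_1=f_2$. The right-hand square is the more substantive point: by the construction of $\varphi$ in \cite[\S5.8]{KT03}, the crystalline Frobenius $F$ of $D$ is divisible by $p$ on the subcrystal $D^0$, and $\varphi\colon D^0\to D$ is the integral lift $p^{-1}F$. On cohomology this yields $\varphi_{*}=p^{-1}F_{i,0}\circ\iota$ on $P^i_{0,1}[p^{-1}]$, so that naturality of universal coefficients gives
$$\varphi_{i,0}(f_1(x))=\overline{p^{-1}F_{i,0}(x)}=f_2\bigl(p^{-1}F_{i,0}(x)\bigr).$$

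The main obstacle is precisely the integral divisibility of $F$ by $p$ on $D^0$: this is not formal but relies on the semistable reduction hypothesis and the explicit construction of the log Dieudonn\'e crystal in \cite[IV]{KT03}, which endows $D$ with the filtration structure required to make $p^{-1}F|_{D^0}$ integral. Once this is granted, the remainder is a formal consequence of the universal coefficient theorem applied to $\RR\Gamma_{\mathrm{crys}}(C^\#/W(\FF),D^0(-Z))$ and $\RR\Gamma_{\mathrm{crys}}(C^\#/W(\FF),D(-Z))$, together with the $p$-torsion nature of $Lie(\A)(-Z)$.
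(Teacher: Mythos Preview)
Your argument is correct and is precisely the one sketched in \cite[\S3.3.3--\S3.3.5]{KT03}, which is all the paper invokes here: the universal coefficient short exact sequence for $-\otimes^{\mbb L}\QQ_p/\ZZ_p$ supplies the maps $f_k$, the $p$-torsion nature of the cohomology of $Lie(\A)(-Z)$ makes $\iota$ an isomorphism, and the relation $p\varphi=F\circ{\bf 1}$ on $D^0\to D$ from \cite[\S5.8]{KT03} gives the right-hand square. One small wording fix: $Lie(\A)(-Z)$ is not itself a finite $\FF$-vector space but a coherent $\cO_C$-module, so it is its Zariski cohomology groups $\coh^i(C,Lie(\A)(-Z))$ that are finite-dimensional over $\FF$ and hence killed by $p$; this is what you actually use.
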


\begin{lemma}\label{linalg1} Let $M$ be a torsion $\ZZ_p$-module of cofinite type. Then the Pontryagin dual of $M\otimes_{\ZZ_p} W(\kpinf)$ is a finitely generated $\La$-module. Moreover, we have an isomorphism of $\La$-modules
$$\big(M\otimes_{\ZZ_p} W(\kpinf)\big)^\vee\simeq\La^r\oplus \bigoplus_{i=1}^s \La/(p^{n_i})\,.$$
\end{lemma}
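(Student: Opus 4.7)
The plan is to reduce the lemma to two elementary cases via the structure theorem, and then perform a direct computation based on the normal basis theorem for the Witt vectors of the unramified cyclic extensions $k_n/\FF$.

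Since $M^\vee$ is a finitely generated torsion $\ZZ_p$-module, the structure theorem gives $M\simeq(\QQ_p/\ZZ_p)^a\oplus\bigoplus_i\ZZ/p^{m_i}$. Both $\otimes_{\ZZ_p}W(\kpinf)$ and Pontryagin dualization being additive, I would handle each summand $M'$ separately. The standard adjunction
\[
\bigl(M'\otimes_{\ZZ_p}W(\kpinf)\bigr)^\vee \;\simeq\; \Hom_{\ZZ_p}^{\mathrm{cts}}\bigl(W(\kpinf),(M')^\vee\bigr)
\]
then reduces the problem to the $\La$-module $\Hom_{\ZZ_p}^{\mathrm{cts}}(W(\kpinf),\ZZ_p)$, on which $\La$ acts via the inverse of the natural $\Gamma$-action on $W(\kpinf)$.

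The heart of the argument will be the identification
\[
\Hom_{\ZZ_p}^{\mathrm{cts}}(W(\kpinf),\ZZ_p)\;\simeq\;W(\FF)[[\Gamma]]\;\simeq\;\La^{f},\qquad f:=[\FF:\FF_p],
\]
as $\La$-modules. Writing $W(\kpinf)$ as the $p$-adic completion of $\bigcup_n W(k_n)$ rewrites the left side as $\varprojlim_n \Hom_{\ZZ_p}(W(k_n),\ZZ_p)$. For each $n$, the cyclic unramified extension $W(k_n)/W(\FF)$ admits a normal integral basis, so $W(k_n)$ is free of rank one over $W(\FF)[\Gamma_n]$. Choosing a compatible system $\theta_n$ with $\tr_{k_{n+1}/k_n}(\theta_{n+1})=\theta_n$ and using the trace pairing to identify each $\Hom_{\ZZ_p}(W(k_n),\ZZ_p)\simeq W(k_n)$ equivariantly, the transition maps of the inverse system become the natural surjections $W(\FF)[\Gamma_{n+1}]\twoheadrightarrow W(\FF)[\Gamma_n]$, whose inverse limit is $W(\FF)[[\Gamma]]$.

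With this in hand, the case $M'=\QQ_p/\ZZ_p$ contributes $\La^f$ to the dual, while $M'=\ZZ/p^n$ contributes $W(\FF)[[\Gamma]]/p^n\simeq(\La/p^n)^f$; assembling gives $(M\otimes_{\ZZ_p}W(\kpinf))^\vee\simeq\La^{af}\oplus\bigoplus_i(\La/p^{m_i})^f$, which has the asserted form and is in particular finitely generated over $\La$. The main obstacle is the existence of the compatible sequence of normal basis elements $\theta_n$ across the arithmetic tower, which I would settle by a Hilbert~90 argument exploiting the cohomological triviality of the cyclic unramified extensions $W(k_n)/W(\FF)$; alternatively one can bypass this explicit choice by invoking Nakayama's lemma, reducing finite generation to a computation modulo $(p,T)\subset\La$ that is controlled by the finite-field normal basis theorem for $\kpinf/\FF$.
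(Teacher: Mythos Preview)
Your plan is correct and will prove the lemma. The reduction via the structure theorem and the Hom--tensor adjunction are fine, and the core computation $\Hom^{\mathrm{cts}}_{\ZZ_p}(W(\kpinf),\ZZ_p)\simeq W(\FF)[[\Gamma]]$ via compatible normal integral bases for the tower $W(k_n)/W(\FF)$ is valid (such bases exist because these extensions are unramified of $p$-power degree). Your alternative via Nakayama is in fact exactly the route the paper takes.

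The paper's argument differs in emphasis: rather than building an explicit isomorphism, it works entirely on the Nakayama side. It first treats $M=\ZZ/p\ZZ$ by observing that $\big((\kpinf)^\vee\big)_\Gamma\simeq\FF^\vee$ is one-dimensional over $\FF$, so the dual is cyclic over $\FF[[\Gamma]]$ and hence (being infinite) free of rank one. It then bootstraps to $M=\ZZ/p^j\ZZ$ by induction on $j$ via a five-lemma/snake-lemma diagram comparing $W_j(\FF)[[\Gamma]]$ with $(W_j(\kpinf))^\vee$ along the exact sequence $\ZZ/p\hookrightarrow\ZZ/p^j\twoheadrightarrow\ZZ/p^{j-1}$, and finally passes to $\QQ_p/\ZZ_p$ by a limit. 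This avoids the (mild) issue of producing a trace-compatible sequence of normal basis generators $\theta_n$, at the cost of the inductive diagram chase. Your approach, by contrast, is more constructive and immediately yields the refined statement $(M\otimes_{\ZZ_p}W(\kpinf))^\vee\simeq\La^{af}\oplus\bigoplus_i(\La/p^{m_i})^f$ with $f=[\FF:\FF_p]$, which the paper's phrasing does not make explicit. Either route suffices for the lemma as stated.
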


\begin{proof} Let $X_\infty$ denote the Pontryagin dual of $M\otimes_{\ZZ_p} W(\kpinf)$. Then $X_\infty$ is the limit of the projective system of Pontryagin duals $X_n:=\big(M\otimes_{\ZZ_p} W(k_n)\big)^\vee$. Note that the functor $M\rightsquigarrow(M\otimes_{\ZZ_p} W(\kpinf))^\vee$ is exact.

In the case $M=\ZZ/p\ZZ$, we have a projective system $\{X_n=(k_n)^\vee\}_n$ where the transition maps are the Pontryagin dual of the canonical inclusions
$k_n\hookrightarrow k_{n+1}$. Let $\Omega:=\FF[[\Gamma]]\simeq \FF[[T]]$. We have
$$(X_\infty)_\Gamma=X_\infty/TX_\infty=\big((\kpinf)^\Gamma\big)^\vee\simeq\FF\,.$$
Now, by Nakayama's lemma, this implies that $X_\infty$ is a cyclic $\Omega$-module and since $X_\infty$ is infinite we have $X_\infty\simeq\Omega=\La/p\La$.

If $M=\ZZ/p^j\ZZ$, then $X_\infty=(W_j(\kpinf))^\vee={\ilim} (W_j(k_n))^\vee=:Y_j$ (where $W_j$ denotes Witt vectors of length $j$) and we prove by the same argument that $Y_j$ is a cyclic $W_j(\FF)[[\Gamma]]$-module. In particular for each $j$ we have a surjective map:
$$W_j(\FF)[[\Gamma]]\,\lr Y_j\,.$$
We prove by induction on $j$ that $Y_j$ is a free $W_j(\FF)[[\Gamma]]$-module. The case $j=1$ was treated above. Now assume the assertion is true for $j-1$ and consider the commutative diagram of short exact sequences:
$$\begin{CD}
0 @>>> Y_{j-1} @>{\delta}>>  Y_j @>{\epsilon}>> Y_1 @>>> 0 \\
&& @A{\alpha}AA @A{\beta}AA  @A{\gamma}AA \\
0 @>>> W_{j-1}(\FF)[[\Gamma]] @>{\times p}>> W_j(\FF)[[\Gamma]] @>>> \Omega @>>> 0 \,.\end{CD}$$
The upper horizontal line is induced by the exact sequence $\ZZ/p\ZZ\hookrightarrow\ZZ/p^j\ZZ\twoheadrightarrow\ZZ/p^{j-1}\ZZ\,.$ The vertical maps are constructed as follows: define first $\beta$ as the map sending 1 to a generator $x$ of the cyclic $W_j(\FF)[[\Gamma]]$-module $Y_j$. Then $px\in\Ker(\epsilon)=\image(\delta)$. Let $w\in Y_{j-1}$ be such that $px=\delta(w)$ and let $y=\epsilon(x)$. Finally, define $\alpha$ to be the map sending 1 to $w$ and $\gamma$ to be the map sending 1 to $y$. The map $\gamma$ is surjective because $\beta$ and $\epsilon$ are surjective. So $\gamma$ is an isomorphism since $Y_1$ is infinite and the only proper quotients of $\Omega$ are finite. We deduce by the snake lemma that $\alpha$ is surjective and therefore, by the induction hypothesis, an isomorphism. This implies that $\beta$ is also an isomorphism, by the 5-lemma.

The case $M=\QQ_p/\ZZ_p$ is deduced from the case $M=\ZZ/p^j\ZZ$ by passing to the inductive limit in $j$ and the general case follows from the cases $M=\QQ_p/\ZZ_p$ and $M=\ZZ/p^j\ZZ$.
\end{proof}

We deduce from Lemmas \ref{linalg1} and \ref{cofinite}:
\begin{corollary}\label{M,L} \hspace{5pt}
\begin{enumerate}
\item
%For any $i$ and any $n\geq 0$, $(M^i_{1,n})^\vee)_n$ and $((M^i_{2,n})^\vee)_n$ are two projective systems of finitely generated $\ZZ_p[\Gamma_n]$-modules with the same rank and respective limits $(M^i_{1,\infty})^\vee$ and $(M^i_{2,\infty})^\vee$.
For any $i\geq 0$, $(M^i_{1,\infty})^\vee$ and $(M^i_{2,\infty})^\vee$ are two $\La$-modules of finite type with the same rank $r_i$ (equal to the $\ZZ_p$-rank of $P^i_0${\em )} and torsion parts isomorphic to $\oplus_{j=1}^s \La/p^{n_j}\La.$
\item For any $i$, $(L^i_\infty)^\vee$ is a finitely generated torsion $\La$-module isomorphic to $\oplus_{j=1}^{d(L^i_0)} \La/p\La\,.$
\item Passing to the Pontryagin dual, the long exact sequences \eqref{L1} and \eqref{L2} induce long exact sequences of $\La$-modules with $\La$-linear operators.
\end{enumerate}
\end{corollary}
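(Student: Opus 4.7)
The corollary is essentially a formal consequence of the two preceding lemmas together with the base-change isomorphism (\ref{e:mi0infty}), so my strategy is to assemble them.

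\emph{Part (1).} First I would apply Lemma \ref{cofinite}(1) to obtain, for $k=1,2$, that $M^i_{k,0}$ is a cofinitely generated torsion $\ZZ_p$-module of corank $r_i := \rank_{\ZZ_p}(P^i_0)$. Writing $M^i_{k,0}$ in the standard form $(\QQ_p/\ZZ_p)^{r_i} \oplus \bigoplus_j \ZZ/p^{n_{j,k}}\ZZ$ and combining with (\ref{e:mi0infty}), I would then apply Lemma \ref{linalg1} summand-by-summand (invoking the exactness of $M \mapsto (M\otimes_{\ZZ_p} W(\kpinf))^\vee$, which is observed inside the proof of that lemma) to conclude
$$(M^i_{k,\infty})^\vee \;\simeq\; \La^{r_i} \oplus \bigoplus_{j} \La/p^{n_{j,k}} \La.$$
The free ranks of $(M^i_{1,\infty})^\vee$ and $(M^i_{2,\infty})^\vee$ coincide since each is forced to equal $r_i$.

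\emph{Part (2).} For $L^i_\infty$ the argument is even simpler: the base-change isomorphism $L^i_\infty \simeq L^i_0 \otimes_{\FF}\kpinf$ recorded in \S\ref{ss:cohomtheories}(7), combined with the fact that $L^i_0$ is a finite $\FF_p$-vector space of dimension $d(L^i_0)$, reduces the statement to computing $(\ZZ/p\ZZ\otimes_{\ZZ_p}W(\kpinf))^\vee$, which the proof of Lemma \ref{linalg1} identifies with $\La/p\La$. Taking a direct sum over a basis of $L^i_0$ gives exactly the claim.

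\emph{Part (3).} Here the key point is that the distinguished triangles (\ref{T1}) and (\ref{T2}) are obtained by applying $\RR\pi^{\ast}$ to constructions on $C$ and tensoring with $\QQ_p/\ZZ_p$; they are therefore $\Gamma$-equivariant, so (\ref{L1}) and (\ref{L2}) are long exact sequences of discrete $\Gamma$-modules. The operators ${\bf 1}$ and ${\bf 1}-\varphi$ commute with the $\Gamma$-action, because $\Gamma$ acts on the factor $\kpinf$ of $C_\infty = C\times_{\FF}\kpinf$ while the crystalline Frobenius acts semilinearly on the coefficient Witt vectors, and the two actions are generated by powers of $\Fr_q$ on distinct factors (a fact one should extract from the description of the Frobenius in (\ref{e:mi0infty})). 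Since Pontryagin duality is exact on discrete $p$-primary torsion groups, dualizing yields long exact sequences of $\La$-modules, and the dualized operators are $\La$-linear precisely because their predecessors commute with $\Gamma$.

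\textbf{Main obstacle.} None of the three parts is deep; the substantive work has been absorbed into Lemmas \ref{cofinite} and \ref{linalg1}. The only point requiring genuine care is the $\Gamma$-equivariance bookkeeping in part (3), specifically verifying that the crystalline Frobenius $\varphi$ commutes with the Galois action so that the duals of ${\bf 1}-\varphi_{i,\infty}$ become honest $\La$-linear endomorphisms. Once this is granted, parts (1) and (2) follow essentially by inspection from the explicit structure theorem supplied by Lemma \ref{linalg1}.
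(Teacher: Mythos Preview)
Your proposal is correct and follows essentially the same approach as the paper: parts (1) and (2) are deduced directly from Lemmas \ref{cofinite} and \ref{linalg1} via the base-change isomorphisms, and part (3) from the commutation of $\varphi$ with the $\Gamma$-action. One small correction to your phrasing in part (3): the Galois element $\tau\in\Gamma$ and the Frobenius $\Fr_q$ appearing in $\varphi_{i,\infty}=\varphi_{i,0}\otimes\Fr_q$ both act on the \emph{same} factor $W(\kpinf)$, not distinct ones; they commute simply because $\Gamma$ is abelian, which is exactly the one-line argument the paper gives.
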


\begin{proof} The first assertion is a consequence of Lemmas \ref{linalg1} and \ref{cofinite}. Assertion (2) is proved similarly to the case of $M^i_{k,\infty}$. For the third assertion, note that $\Gamma$ is an abelian group and therefore any $\tau\in\Gamma$ commutes with $\Fr_q$. Hence, the operator ${\bf 1}-\varphi$ is $\La$-linear.
\end{proof}

Next, we are going to prove that $(N^i_\infty)^\vee$ is a finitely generated torsion $\La$-module for $i=0,\dots,2$.

\begin{proposition} \label{0case} The $\La$-modules $(N^0_\infty)^\vee$ and $\left(\Coker({\bf 1}-\varphi_{0,\infty})\right)^\vee$ are $\La$-torsion.
\end{proposition}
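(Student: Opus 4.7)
The plan is to reduce the statement to a determinant computation in $\sigma$-linear algebra over $W(\kpinf)[p^{-1}]$. First, since the theories $M^\bullet$ and $N^\bullet$ are concentrated in degrees $0,1,2$, the opening terms of \eqref{L1} for $i=0$ assemble into a four-term exact sequence, which upon Pontryagin duality and Corollary \ref{M,L}(3) becomes the $\La$-linear sequence
\[
0 \to \bigl(\Coker({\mathbf 1}-\varphi_{0,\infty})\bigr)^\vee \to (M^0_{2,\infty})^\vee \xrightarrow{\widetilde\varphi} (M^0_{1,\infty})^\vee \to (N^0_\infty)^\vee \to 0.
\]
By Corollary \ref{M,L}(1) the two middle terms have $\La$-rank $r_0 := \rank_{\ZZ_p} P^0_0$, so the outer terms have equal $\La$-rank. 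It therefore suffices to show that $\widetilde\varphi$ becomes an isomorphism after $\otimes_\La Q(\La)$, equivalently that $\det(\widetilde\varphi)\neq 0$ in $Q(\La)$.

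Next, I would combine the base-change isomorphism \eqref{e:mi0infty} with Lemma \ref{cofinite}. The former identifies ${\mathbf 1}-\varphi_{0,\infty}$ with the $\Fr_q$-twisted operator ${\mathbf 1}\otimes\id-\varphi_{0,0}\otimes\Fr_q$ on $M^0_{k,0}\otimes W(\kpinf)$; the latter, modulo maps with $\ZZ_p$-lattice kernel and finite cokernel (which contribute only $\La$-torsion and so disappear over $Q(\La)$), conjugates this to $\id - p^{-1}F_{0,0}\otimes\Fr_q$ acting on the finite-dimensional $W(\kpinf)[p^{-1}]$-vector space $V_\infty := P^0_0[p^{-1}]\otimes_{W(\FF)[p^{-1}]}W(\kpinf)[p^{-1}]$ of dimension $r_0$.

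The third step transfers this $\sigma$-linear picture to $\La$-linear algebra. By the proof of Lemma \ref{linalg1}, Pontryagin duality identifies $(\QQ_p/\ZZ_p\otimes W(\kpinf))^\vee$ with $\La$ and sends the action of $\Fr_q$ to multiplication by a topological generator $\gamma\in\Gamma$ (up to $\La^\times$). On the free part $\La^{r_0}$ of $(M^0_{k,\infty})^\vee$, the operator $\widetilde\varphi$ will then take the form $I-B\gamma$, with $B$ the matrix of $p^{-1}F_{0,0}$ in a suitable integral extension. Its determinant factors over an algebraic closure as $\prod_{i=1}^{r_0}(1-\beta_i\gamma)$, where the $\beta_i$ are the eigenvalues of $p^{-1}F_{0,0}$, all nonzero $p$-adic algebraic numbers. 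Since $\gamma^{-1}$ is transcendental over $\overline{\QQ}_p$ inside $Q(\La)\otimes_{\QQ_p}\overline{\QQ}_p$, each factor is nonzero, and hence $\det(\widetilde\varphi)\neq 0$ in $Q(\La)$.

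The main obstacle will be the third paragraph: correctly tracking the identification of $\Fr_q$ with multiplication by $\gamma$ under Pontryagin duality, and ensuring that the various error terms (the finite kernels and cokernels from Lemma \ref{cofinite}, together with the torsion summands in $(M^0_{k,\infty})^\vee$) only perturb $\widetilde\varphi$ by $\La$-torsion contributions, so that the generic determinant is indeed computed by the $\sigma$-linear algebra on $V_\infty$.
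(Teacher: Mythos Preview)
Your approach is valid in principle but takes a genuinely different route from the paper's. The paper dispatches $(N^0_\infty)^\vee$ in one line of arithmetic: by the exact sequence \eqref{e:alg-arithm} (coming from \cite[\S2.5.2]{KT03}), $N^0_\infty$ injects into $A(\Kpinf)[p^\infty]$, whose Pontryagin dual is a finitely generated $\ZZ_p$-module, hence automatically $\La$-torsion. The cokernel statement then follows from the same four-term exact sequence and rank count you wrote down. No $\sigma$-linear algebra, no determinants, no tracking of $\Fr_q$ through duality.

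Your route instead attacks both terms simultaneously by showing $\widetilde\varphi\otimes Q(\La)$ is an isomorphism, via an explicit determinant $\prod_i(1-\beta_i\gamma)$. This is essentially the content of Lemma \ref{sblem-section2} (which the paper only invokes later, for the harder $i=1$ case): the map $1-\varphi\otimes\Fr_q$ on $V\otimes W(\kpinf)$ is surjective with finite-dimensional $\QQ_p$-kernel, which after dualizing gives exactly the $\La$-torsion statement you want. So your plan would work, and the ``main obstacle'' you flag is real but surmountable---indeed it is precisely what the paper carries out in \S\ref{ss:Pinfty} and the proof of Theorem \ref{t:GIMC3}. One caution: $F_{0,0}$ is $\Fr_q$-semilinear over $W(\FF)$, so when you speak of its ``eigenvalues'' and matrix $B$ you are implicitly viewing it as a $\QQ_p$-linear endomorphism of $P^0_0[p^{-1}]$; this is fine but should be said.

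What each approach buys: the paper's argument is short and uses the geometric meaning of $N^0_\infty$, but imports an external result. Yours is self-contained within the cohomological framework and foreshadows the determinant machinery used for the Main Conjecture itself, at the cost of front-loading the delicate duality bookkeeping that the paper defers.
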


\begin{proof} Reasoning as in \cite[\S2.5.2]{KT03} (see \eqref{e:alg-arithm} below), one obtains that the module $(N^0_\infty)^\vee$ is a quotient of $A(\Kpinf)[p^\infty]^\vee$. The latter is a finitely generated $\ZZ_p$-module (and so a torsion $\La$-module), hence the claim for $(N^0_\infty)^\vee$ is proven. As for $(\Coker({\bf 1}-\varphi_{0,\infty}))^\vee$, note that we have an exact sequence of $\La$-modules
$$\begin{CD} 0\lr (\Coker({\bf 1}-\varphi_{0,\infty}))^\vee\ \lr (M_{2,\infty}^0)^\vee @>{({\bf 1}-\varphi_{0,\infty})^\vee}>> (M_{1,\infty}^0)^\vee \lr (N^0_\infty)^\vee \lr 0. \end{CD}$$
Since $(M_{1,\infty}^0)^\vee$ and $(M_{2,\infty}^0)^\vee$ have the same $\La$-rank, it implies that the $\La$-rank of $(\Coker({\bf 1}-\varphi_{0,\infty}))^\vee$ is equal to that of $(N^0_\infty)^\vee$, which is zero.
\end{proof}

We now prove that $(N^1_\infty)^\vee$ is $\La$-torsion.\\

The exact sequence:
\begin{multline}\label{alpha_beta}
\cdots \lra \coh^1_{\mathrm{syn}} (C_\infty,\pi^*\mathcal{S}_{D}) \otimes \QQ_p
\buildrel{\alpha}\over\lra \coh^1_{\mathrm{syn}} (C_\infty,\pi^*\mathcal{S}_{D}
\otimes \QQ_p/\ZZ_p) \\
\buildrel{\beta}\over\lra \coh^2_{\mathrm{syn}} (C_\infty,\pi^*\mathcal{S}_{D})\buildrel{\gamma}\over\lra \coh^2_{\mathrm{syn}} (C_\infty,\pi^*\mathcal{S}_{D})\otimes \QQ_p\lra \cdots
\end{multline}
induces a short exact sequence
\begin{equation}\label{exact-section2}
0\lra \image(\alpha)\lra N^1_\infty\lra \image(\beta)\lra 0.
\end{equation}
By taking the Pontryagin dual of $(\ref{exact-section2})$, we have
\begin{equation}\label{exactbisbis-section2}
0\lra \image(\beta)^\vee \lra (N^1_\infty)^\vee \lra \image(\alpha )^\vee \lra 0,
\end{equation}
where the modules and the morphisms are naturally defined over $\La$.

%%%%%%%%%%%%%%%%%%%%%%%%%%%%%%%\Q_p%%%%%%%%%%%%%%%%%%%%%%%%%%%%%%%%%%
\begin{lemma}\label{finitevector}
$\coh^1_{\mathrm{syn}} (C_\infty,\pi^*\mathcal{S}_{D})\otimes \QQ_p$ is a finite dimensional
$\QQ_p$-vector space.
\end{lemma}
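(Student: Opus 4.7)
The plan is to apply the distinguished triangle \eqref{T2} defining $\mathcal{S}_D$, converting the finiteness statement into a $\sigma$-linear algebra problem about $1-\varphi$ on crystalline cohomology, which will then be settled by a Dieudonn\'e--Manin slope decomposition.

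Pulling back \eqref{T2} along $\pi$, applying $\RR\Gamma(C_\infty,-)$, and tensoring with the exact functor $-\otimes\QQ_p$, I would produce the long exact sequence
\begin{multline*}
\cdots\lra\coh^{i-1}_{\mathrm{crys}}(C_\infty^\#/W(\kpinf),\pi^*D(-Z))[1/p]\lra\coh^i_{\mathrm{syn}}(C_\infty,\pi^*\mathcal{S}_D)\otimes\QQ_p\\
\lra\coh^i_{\mathrm{crys}}(C_\infty^\#/W(\kpinf),\pi^*D^0(-Z))[1/p]\buildrel{1-\varphi}\over\lra\coh^i_{\mathrm{crys}}(C_\infty^\#/W(\kpinf),\pi^*D(-Z))[1/p]\lra\cdots.
\end{multline*}
Since the quotient $D/D^0\simeq i_\ast Lie(\mathcal{A})$ is an $\FF$-vector bundle and is therefore $p$-torsion, the natural map $D^0\to D$ induces an isomorphism after inverting $p$, so the source and target of $1-\varphi$ are canonically identified. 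For $i=1$ this exhibits $\coh^1_{\mathrm{syn}}(C_\infty,\pi^*\mathcal{S}_D)\otimes\QQ_p$ as an extension of $\ker(1-\varphi)$ on the degree-one crystalline group by $\coker(1-\varphi)$ on the degree-zero group. By \eqref{e:pi0n} and passage to the limit over $n$, each such crystalline group is isomorphic to $V_i\otimes_{K_0}L_\infty$, where $V_i:=P^i_0[1/p]$, $K_0:=W(\FF)[1/p]$, $L_\infty:=W(\kpinf)[1/p]$, and $\varphi$ acts as $\varphi_{i,0}\otimes\sigma$, with $\varphi_{i,0}$ differing from the crystalline Frobenius $F_{i,0}$ only by a power of $p$ (cf.~Lemma~\ref{cofinite}). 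It will therefore suffice to establish the following $\sigma$-linear algebra claim: \emph{for any finite-dimensional $K_0$-vector space $V$ equipped with a $\sigma$-linear operator $\varphi$, the $\QQ_p$-linear operator $1-\varphi\otimes\sigma$ on $V\otimes_{K_0}L_\infty$ has finite-dimensional kernel and cokernel over $\QQ_p$.}

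I expect the claim to be the main obstacle. My approach would be Dieudonn\'e--Manin: after a finite unramified extension of scalars, write $V$ as a direct sum of pure-slope isocrystals. On a summand of nonzero slope, either $\varphi$ or $\varphi^{-1}$ is topologically nilpotent on a natural $W$-lattice, so the Neumann series for the inverse of $1-\varphi\otimes\sigma$ converges $p$-adically and both kernel and cokernel vanish. On the slope-zero (\'etale) summand, a further finite extension reduces to the rank-one situation $1-u\sigma$ on $L_\infty$ with $u\in W(\FF)^\times$; writing $L_\infty=\bigcup_n W(k_n)[1/p]$ and applying Hilbert~90 (or a normal-basis computation) to each cyclic layer $k_n/\FF$ gives kernel and cokernel of dimension at most one over $\QQ_p$ on every level, and these bounds persist in the direct limit. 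Summing the slope contributions proves the claim, and substitution into the long exact sequence completes the proof of Lemma~\ref{finitevector}.
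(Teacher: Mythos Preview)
Your overall strategy coincides with the paper's: both use the triangle \eqref{T2} (together with the observation that $D^0\to D$ is an isomorphism after inverting $p$) to squeeze $\coh^1_{\mathrm{syn}}(C_\infty,\pi^*\mathcal{S}_D)\otimes\QQ_p$ between $\Coker(1-\varphi_0)$ and $\Ker(1-\varphi_1)$, and then reduce everything to a $\sigma$-linear algebra statement about $1-\varphi\otimes\Fr_q$ acting on $P^i_0[1/p]\otimes L_\infty$. The paper isolates this as a separate Lemma~\ref{sblem-section2}, proved by observing that the argument of \cite[Lemme~6.2]{EL97} goes through verbatim with $\bar\FF_p$ replaced by $\kpinf$; in fact that argument yields the stronger conclusion that $1-\varphi\otimes\Fr_q$ is \emph{surjective} (not merely of finite cokernel).

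Your Dieudonn\'e--Manin sketch is a reasonable alternative route to the same lemma, but there is a small gap in the slope-zero step. Since $\Fr_q$ is the identity on $K_0=W(\FF_q)[1/p]$, your ``$\sigma$-linear'' operator $\varphi_{i,0}$ is in fact $K_0$-linear, so the slope decomposition is nothing more than grouping eigenvalues by $p$-adic valuation; but then ``a further finite extension reduces to the rank-one situation'' is not true in general, because a Jordan block with unit eigenvalue need not split. You would need an additional d\'evissage along the Jordan filtration (the rank-one graded pieces each contributing at most a one-dimensional kernel and cokernel, and the Artin--Schreier--Witt surjectivity over $\kpinf$ handling the cokernel). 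With that fix your argument goes through and is essentially a direct unpacking of what \cite{EL97} does.
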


\begin{proof} The long exact sequence
\begin{multline*}
\cdots
\lra \coh^i_{\mathrm{syn}} (C_\infty,\pi^*\mathcal{S}_{D})\otimes \QQ_p\lra \coh^i_{\mathrm{crys}}\big(C_\infty^\#/W(\kpinf),\pi^*D^0(-Z)\big)\otimes \QQ_p \\
\buildrel{1-\varphi_i}\over\lra \coh^i_{\mathrm{crys}}\big(C_\infty^\#/W(\kpinf),\pi^*D(-Z)\big)\otimes \QQ_p\lra \cdots \end{multline*}
can be rewritten
\begin{multline*}
\cdots \lra \coh^i_{\mathrm{syn}} (C_\infty,\pi^*\mathcal{S}_{D})\otimes \QQ_p\lra \coh^i_{\mathrm{crys}}(C_\infty^\#/W(\kpinf),\pi^*D(-Z))\otimes \QQ_p \\
\buildrel{1-\varphi_i}\over\lra \coh^i_{\mathrm{crys}}(C_\infty^\#/W(\kpinf),\pi^*D(-Z)) \otimes \QQ_p\lra \cdots
\end{multline*}
(because the difference between the middle terms in these sequences is $L^i_\infty\otimes\QQ_p$, which, thanks to Lemma \ref{M,L}, (2) is trivial).\\
We deduce from this long exact sequence the following short exact sequence:
$$ 0\lra \Coker(1-\varphi_0)\lr \coh^1_{\mathrm{syn}} (C_\infty,\pi^*\mathcal{S}_{D})\otimes \QQ_p\lra \Ker(1-\varphi_1)\lra 0. $$
Since $\coh^i_{\mathrm{crys}}(C_\infty^\#/W(\kpinf),\pi^*D(-Z))\otimes\QQ_p$ is a finite dimensional $W(\kpinf)[\frac{1}{p}]$-vector space and \eqref{e:pi0n} holds, the assertion is implied by the following:
%%%%%%%%%%%%%% Lemma 2.9 %%%%%%%%%%%%%%%%%%%%%%%%%%%%%%%%%%%%%
\begin{lemma}\label{sblem-section2} Let $V$ be a finite dimensional $\QQ_p$-vector space endowed with a linear operator $\varphi\colon V\rightarrow V$. Then
$1-\varphi\otimes\Fr_q\colon V\otimes_{\ZZ_p}W(\kpinf)\to V\otimes_{\ZZ_p}W(\kpinf)$ is a surjective map whose kernel is a finite dimensional $\QQ_p$-vector space.
\end{lemma}
\begin{proof} One can easily check that the proof of \cite[Lemme 6.2]{EL97} remains true if we replace $\bar{\FF}_p$ by $\kpinf$.
\end{proof}
\end{proof}
%%%%%%%%%%%%%%%corollary 2.10 %%%%%%%%%%%%%%%%%%%%%%%%%%%%%%%%%%%%%%%%%%

\begin{corollary}\label{lemmaN^1}
The group $\image(\alpha)^{\lor}$ of \eqref{exactbisbis-section2} is a free $\ZZ_p$-module of finite rank.
%whose rank is equal to $\dim_{\QQ_p}(\coh^1_{\mathrm{syn}} (C_\infty,\pi^*\mathcal{S}_{D})\otimes \QQ_p)$.
\end{corollary}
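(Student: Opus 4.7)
The plan is to exhibit $\image(\alpha)$ as a quotient of finitely many copies of $\QQ_p/\ZZ_p$; the statement then follows immediately by Pontryagin duality.

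By Lemma \ref{finitevector}, $V := \coh^1_{\mathrm{syn}}(C_\infty,\pi^*\mathcal{S}_D) \otimes \QQ_p$ is a finite-dimensional $\QQ_p$-vector space; let $d := \dim_{\QQ_p} V$. On the other hand, $N^1_\infty$ is a $p$-primary torsion group by construction (its coefficients lie in $\QQ_p/\ZZ_p$), so every element of $\image(\alpha) \subseteq N^1_\infty$ is killed by some power of $p$.

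Put $K := \Ker(\alpha)$. Since $V/K \cong \image(\alpha)$ is $p$-primary torsion and $V$ has dimension $d$ over $\QQ_p$, one easily finds a finitely generated $\ZZ_p$-submodule $L \subseteq K$ that spans $V$ as a $\QQ_p$-vector space: pick any $\QQ_p$-basis $v_1,\dots,v_d$ of $V$, choose integers $n_i \geq 0$ with $p^{n_i}v_i \in K$, and set $L := \sum_{i=1}^d \ZZ_p \cdot p^{n_i}v_i$. Then $L$ is a $\ZZ_p$-lattice inside $V$, so $V/L \cong (\QQ_p/\ZZ_p)^d$; and the canonical surjection $V \twoheadrightarrow V/K = \image(\alpha)$ factors through $V/L$, exhibiting $\image(\alpha)$ as a quotient of $(\QQ_p/\ZZ_p)^d$.

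Taking Pontryagin duals (which is exact), $\image(\alpha)^\vee$ becomes a $\ZZ_p$-submodule of $\big((\QQ_p/\ZZ_p)^d\big)^\vee \cong \ZZ_p^d$. Since $\ZZ_p$ is a principal ideal domain, every submodule of $\ZZ_p^d$ is free of rank at most $d$, which gives the required conclusion. The only nontrivial input is Lemma \ref{finitevector}; everything else is elementary linear algebra over $\ZZ_p$, so there is no real obstacle to overcome.
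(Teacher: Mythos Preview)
Your argument is correct and follows essentially the same route as the paper's proof: both observe that $N^1_\infty$ is $p$-power torsion (the paper cites \eqref{L1} rather than the coefficients), then use Lemma~\ref{finitevector} to conclude that $\image(\alpha)$ is a torsion quotient of a finite-dimensional $\QQ_p$-vector space, hence cofree of finite corank. Your explicit construction of the lattice $L\subseteq\Ker(\alpha)$ just unpacks the step that the paper compresses into the single phrase ``$\image(\alpha)$ is cofree of finite corank''.
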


\begin{proof}
By (\ref{L1}), $N^1_\infty$ is a torsion $\ZZ_p$-module. Thus, $\image(\alpha)$ is a torsion $\ZZ_p$-module
which is a quotient of $\coh^1_{\mathrm{syn}}(C_\infty,\pi^*\mathcal{S}_{D})\otimes \QQ_p$. We deduce from Lemma \ref{finitevector} that $\image(\alpha)$
is cofree of finite corank $n\leq \dim_{\QQ_p}(\coh^1_{\mathrm{syn}} (C_\infty,\pi^*\mathcal{S}_{D})\otimes \QQ_p)$.
%But since $\coh^1_{\mathrm{syn}} (C_\infty,\pi^*\mathcal{S}_{D})$ is an extension of a submodule of $\coh^1_{\mathrm{crys}}(C_\infty/W(\kpinf),\pi^*D^0(-Z))$ by a quotient of
%$$\coh^0_{\mathrm{crys}}(C_\infty/W(\kpinf),\pi^*D(-Z)),$$ the kernel of $\alpha$ contains no non-zero $p$-divisible element. This proves that the corank of $\image(\alpha)$ is equal to
%$\dim_{\QQ_p}(\coh^1_{\mathrm{syn}} (C_\infty,\pi^*\mathcal{S}_{D})\otimes \QQ_p)$.
\end{proof}

We now study the term $\image(\beta)$:
%%%%%%%%%%%%%%%%%%%%%%%%%%%%%%%lemma 2.11 %%%%%%%%%%%%%%%%%%%%%%%%%%%%%%%%
\begin{lemma}\label{beta}
The group $\image(\beta )^{\lor}$ of \eqref{exactbisbis-section2} is a finitely generated torsion $\La$-module.
\end{lemma}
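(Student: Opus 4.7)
Exactness of \eqref{alpha_beta} identifies $\image(\beta)$ with $\Ker(\gamma)$, i.e.\ with the $\ZZ_p$-torsion subgroup of $H:=\coh^2_{\mathrm{syn}}(C_\infty,\pi^*\mathcal{S}_D)$. My plan is to control this torsion via the distinguished triangle \eqref{T2}. Setting $P^{i,0}_\infty:=\coh^i_{\mathrm{crys}}(C_\infty^\#/W(\kpinf),\pi^*D^0(-Z))$, the associated long exact sequence yields
\[
0\lra \Coker\!\bigl(1-\varphi_1\colon P^{1,0}_\infty\to P^1_\infty\bigr)\lra H\lra \Ker\!\bigl(1-\varphi_2\colon P^{2,0}_\infty\to P^2_\infty\bigr)\lra 0.
\]
Using that $L^i_\infty\otimes\QQ_p=0$ (Corollary \ref{M,L}(2)), one identifies $P^{i,0}_\infty\otimes\QQ_p\simeq P^i_\infty\otimes\QQ_p$ exactly as in the proof of Lemma \ref{finitevector}, and then Lemma \ref{sblem-section2} shows that $1-\varphi_1$ is surjective after $\otimes\QQ_p$ while $\Ker(1-\varphi_2)\otimes\QQ_p$ is finite-dimensional over $\QQ_p$. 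Passing to $\ZZ_p$-torsion parts consequently produces
\[
0\lra \Coker(1-\varphi_1)\lra\image(\beta)\lra\Ker(1-\varphi_2)_{\mathrm{tors}}\lra 0,
\]
so by exactness of Pontryagin duality and Noetherianity of $\La$ it suffices to establish the statement of the lemma for the two outer terms.

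The right-hand term is handled immediately: $\Ker(1-\varphi_2)_{\mathrm{tors}}$ is a $\La$-submodule of $(P^{2,0}_\infty)_{\mathrm{tors}}$, which by \eqref{e:pi0n} equals $(P^{2,0}_0)_{\mathrm{tors}}\otimes_{\ZZ_p}W(\kpinf)$. Since $(P^{2,0}_0)_{\mathrm{tors}}$ is a finite abelian $p$-group, Lemma \ref{linalg1} supplies a finitely generated torsion $\La$-module as Pontryagin dual, and the same follows for any $\La$-submodule.

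The main obstacle is the left-hand term $\Coker(1-\varphi_1)$. My plan is to show that it has bounded $p$-exponent, say $p^N$; once this is done, $\Coker(1-\varphi_1)$ is a $\La$-quotient of $P^1_\infty/p^NP^1_\infty\simeq(P^1_0/p^NP^1_0)\otimes_{\ZZ_p}W(\kpinf)$ (by \eqref{e:pi0n}), and since $P^1_0/p^NP^1_0$ is a finite $p$-group, Lemma \ref{linalg1} delivers the required finitely generated torsion $\La$-module Pontryagin dual. To obtain the bound, write $1-\varphi_1$ on the free parts of $P^{1,0}_\infty$ and $P^1_\infty$ (of common $W(\kpinf)$-rank $r$, because $\mathbf{1}$ is rationally an isomorphism) as a matrix $A-B\Fr_q$ with $A,B\in M_r(W(\FF))$. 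The cokernel of $\mathbf{1}\colon P^{1,0}_0\to P^1_0$ injects into $L^0_0$, hence is killed by $p$ (Corollary \ref{M,L}(2)), so the Smith normal form of $A$ over $W(\FF)$ has elementary divisors in $\{1,p\}$; in particular $pA^{-1}$ is integral. A careful semilinear-algebra argument, combining this with the finite exponent of the torsion submodules of $P^{1,0}_0$ and $P^1_0$ and tracking how $\Fr_q$-conjugation on $W(\kpinf)$ interacts with the denominators of $A^{-1}$, should produce a uniform $N$ with $p^NP^1_\infty\subseteq\image(1-\varphi_1)$. This quantitative control of the integral cokernel of the semilinear operator $A-B\Fr_q$ is the delicate heart of the proof; once it is in place, Pontryagin duality concludes the argument.
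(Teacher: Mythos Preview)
Your overall strategy coincides with the paper's: identify $\image(\beta)$ with the $p$-power torsion of $H=\coh^2_{\mathrm{syn}}(C_\infty,\pi^*\mathcal{S}_D)$, use the short exact sequence $0\to\Coker(1-\varphi_1)\to H\to\Ker(1-\varphi_2)\to 0$ coming from the triangle \eqref{T2}, take torsion, and treat the two outer terms separately via Lemma \ref{linalg1}. Your handling of $\Ker(1-\varphi_2)_{\mathrm{tors}}$ is exactly the paper's.

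The difference, and the gap, is in your treatment of $\Coker(1-\varphi_1)$. You try to bound its $p$-exponent by an explicit semilinear matrix argument, which you leave unfinished (``should produce a uniform $N$''). This is not wrong in spirit, but it is unnecessary and you have not actually carried it out. The paper bypasses this entirely with a one-line structural observation: $\Coker(1-\varphi_1)$ is a quotient of $P^1_\infty\simeq P^1_0\otimes W(\kpinf)$, hence is a \emph{finitely generated $W(\kpinf)$-module}; since $W(\kpinf)$ is a discrete valuation ring and (by the surjectivity of $1-\varphi_1$ after $\otimes\QQ_p$ from Lemma \ref{sblem-section2}) this module is entirely $p$-torsion, the structure theorem gives $\Coker(1-\varphi_1)\simeq\bigoplus_j W(\kpinf)/p^{n_j}$ for finitely many $n_j$. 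Now Lemma \ref{linalg1} applies term by term and yields $\Coker(1-\varphi_1)^\vee\simeq\bigoplus_j\La/p^{n_j}$, which is finitely generated torsion over $\La$. No matrix computation is needed: the bounded exponent you were chasing is automatic from finite generation over a DVR.

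So your proposal is on the right track but incomplete at its self-identified ``delicate heart''; replacing that part by the DVR structure argument closes the gap and recovers the paper's proof.
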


\begin{proof}
Note that \eqref{alpha_beta} yields an isomorphism $\image(\beta)\simeq \Ker(\gamma)$.
The kernel of the map
$$\gamma\colon\coh^2_{\mathrm{syn}} (C_\infty,\pi^*\mathcal{S}_{D})\lra \coh^2_{\mathrm{syn}} (C_\infty,\pi^*\mathcal{S}_{D})\otimes \QQ_p$$
is $\coh^2_{\mathrm{syn}} (C_\infty,\pi^*\mathcal{S}_{D})[p^\infty]$.
Recall that we have a short exact sequence:
$$ 0\lra \Coker({\bf 1}-\varphi_{1,\infty})\lra \coh^2_{\mathrm{syn}} (C_\infty,\pi^*\mathcal{S}_{D}) \lra \Ker({\bf 1}-\varphi_{2,\infty})\lra 0. $$
By taking the $p$-power torsion part of this sequence, we have
\begin{equation}\label{exact2-section2}
0\lra \Coker({\bf 1}-\varphi_{1,\infty})[p^\infty] \lra \image(\beta) \lra \Ker({\bf 1}-\varphi_{2,\infty})[p^\infty]\,.
\end{equation}
Taking Pontryagin duals, we have the following:
\begin{equation}\label{seq}
\Ker({\bf 1}-\varphi_{2,\infty})[p^\infty]^\vee \lra \image(\beta)^\vee  \lra \Coker({\bf 1}-\varphi_{1,\infty})[p^\infty]^\vee \lra 0,
\end{equation}
where the modules and the morphisms are defined over $\La$.
By the sequence $(\ref{seq})$, it is enough to show that $\Coker({\bf 1}-\varphi_{1,\infty})[p^\infty]^\vee$ and $\Ker({\bf 1}-\varphi_{2,\infty})[p^\infty]^\vee$ are finitely generated
torsion $\La$-modules. But these two groups are both $p$-torsion subgroups of finitely generated $W(\kpinf)$-modules, so their Pontryagin duals are $\La$-torsion by Lemma \ref{linalg1}.
\end{proof}

\begin{corollary}\label{1case} The groups
$$(N^1_\infty)^\vee \text{, } (\Ker({\bf 1}-\varphi_{1,\infty}))^\vee \text{, } (\Coker({\bf 1}-\varphi_{1,\infty}))^\vee \text{, } (\Ker({\bf 1}-\varphi_{2,\infty}))^\vee \text{ and } (N^2_\infty)^\vee$$
are $\La$-torsion. In particular, $X_p(A/\Kpinf)$ is $\La$-torsion.
\end{corollary}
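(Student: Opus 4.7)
My plan is to bootstrap from the $\La$-torsionness of $(N^1_\infty)^\vee$ down the long exact sequence \eqref{L1}, using rank comparisons on the $(M^i_{k,\infty})^\vee$ provided by Corollary \ref{M,L}. First, combine the short exact sequence \eqref{exactbisbis-section2} with the two results just proved: Corollary \ref{lemmaN^1} shows that $\image(\alpha)^\vee$ is a free $\ZZ_p$-module of finite rank (hence $\La$-torsion, being killed by any topological generator of $\Gamma$ minus $1$ up to finite failure), while Lemma \ref{beta} gives that $\image(\beta)^\vee$ is a finitely generated torsion $\La$-module. Sandwiching $(N^1_\infty)^\vee$ between these two $\La$-torsion modules yields that $(N^1_\infty)^\vee$ is itself $\La$-torsion.

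Next I would extract the short exact sequences coming from \eqref{L1}. Since the cohomology theories $M$ and $N$ are concentrated in degrees $0,1,2$, splicing \eqref{L1} gives
\[ 0\to \Coker({\bf 1}-\varphi_{i-1,\infty})\to N^i_\infty\to \Ker({\bf 1}-\varphi_{i,\infty})\to 0 \]
for $i=1,2$, and surjectivity of ${\bf 1}-\varphi_{2,\infty}$ (because $N^3_\infty=0$). Dualizing the $i=1$ sequence realizes $\Ker({\bf 1}-\varphi_{1,\infty})^\vee$ as a submodule of $(N^1_\infty)^\vee$, hence $\La$-torsion. Now invoke Corollary \ref{M,L}: the duals $(M^1_{1,\infty})^\vee$ and $(M^1_{2,\infty})^\vee$ share the same $\La$-rank $r_1$, so for the dual map $(M^1_{2,\infty})^\vee\to (M^1_{1,\infty})^\vee$ the kernel $\Coker({\bf 1}-\varphi_{1,\infty})^\vee$ and cokernel $\Ker({\bf 1}-\varphi_{1,\infty})^\vee$ have equal $\La$-rank. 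Since the latter is zero, so is the former. The same rank principle, now applied in degree $2$ with $\Coker({\bf 1}-\varphi_{2,\infty})=0$, forces $\Ker({\bf 1}-\varphi_{2,\infty})^\vee$ to be $\La$-torsion as well.

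Finally, dualizing the $i=2$ short exact sequence presents $(N^2_\infty)^\vee$ as an extension of the already-controlled $\Coker({\bf 1}-\varphi_{1,\infty})^\vee$ by $\Ker({\bf 1}-\varphi_{2,\infty})^\vee$, giving $\La$-torsionness. For the last sentence of the corollary, the inclusion $X_p(A/\Kpinf)\hookrightarrow (N^1_\infty)^\vee$ from \cite{KT03} immediately yields that $X_p(A/\Kpinf)$ is $\La$-torsion. The only non-formal ingredient in this whole argument is Lemma \ref{beta} (the control of $\image(\beta)^\vee$), which is where the real work has already been done; everything after that is diagram chasing together with the equality of $\La$-ranks $r_1$ (resp.\ $r_2$) on the two sides of ${\bf 1}-\varphi_{i,\infty}$.
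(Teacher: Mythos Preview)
Your argument is correct and follows essentially the same route as the paper: first $(N^1_\infty)^\vee$ from Lemma~\ref{beta} and Corollary~\ref{lemmaN^1} via \eqref{exactbisbis-section2}, then bootstrap along the pieces of \eqref{L1} using the equality of $\La$-ranks from Corollary~\ref{M,L}. One small imprecision: your parenthetical justification that a free $\ZZ_p$-module of finite rank is $\La$-torsion (``killed by $\gamma-1$ up to finite failure'') is not quite the right reason---$\gamma-1$ need not act nilpotently---but the conclusion is of course correct, since any finitely generated $\ZZ_p$-module is $\La$-torsion (each element satisfies a monic $\ZZ_p$-polynomial relation in $\gamma$).
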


\begin{proof} The module $(N^1_\infty)^\vee$ is $\La$-torsion by Lemma \ref{beta} and Corollary \ref{lemmaN^1}. By \eqref{L1}, using the short exact sequence
\begin{equation} \label{e:sesNi} 0\lr \Coker({\bf 1}-\varphi_{i-1,\infty})\lr N^i_\infty\lr \Ker({\bf 1}-\varphi_{i,\infty})\lr 0 \end{equation}
we find that $(\Ker({\bf 1}-\varphi_{1,\infty}))^\vee$ is $\La$-torsion. Then by using the exact sequence
$$0\lr \Ker({\bf 1}-\varphi_{1,\infty})\lr M^1_{1,\infty}\lr M^1_{2,\infty}\lr \Coker({\bf 1}-\varphi_{1,\infty})\lr 0$$
we deduce that $(\Coker({\bf 1}-\varphi_{1,\infty}))^\vee$ is $\La$-torsion. Finally, by the exact sequence
$$0\lr \Ker({\bf 1}-\varphi_{2,\infty})\lr M^2_{1,\infty}\lr M^2_{2,\infty}\lr 0$$
we know that $(\Ker({\bf 1}-\varphi_{2,\infty}))^\vee$ is $\La$-torsion. Hence, the short exact sequence
$$0\lr \Coker({\bf 1}-\varphi_{1,\infty})\lr N^2_\infty\lr \Ker({\bf 1}-\varphi_{2,\infty})\lr 0$$
tells us that $(N^2_\infty)^\vee$ is also $\La$-torsion. For the last assertion, note that the surjections $N^1_n\twoheadrightarrow \Sel_{p^\infty}(A/K_n)$ proved in \cite[\S2.5.2]{KT03} induce a surjection $N^1_\infty\twoheadrightarrow \Sel_p(A/\Kpinf)$ by passing to the inductive limit in $n$. But since $(N^1_\infty)^\vee$ is $\La$-torsion, the same assertion holds for $X_p(A/\Kpinf)$.
\end{proof}

\noindent This completes the proof of Theorem \ref{GIMC1}.

\subsection{The characteristic element} \label{su:charelem}
For any finitely generated $\La$-torsion module $M$, we let $f_M\in \La$ be a characteristic element associated with $M$; $f_M$ is defined uniquely up to $\La^\times$.

We set
\begin{equation} \label{e:charelement}
f_{A/\Kpinf}:=\frac{f_{(N^1_\infty)^\vee}f_{(L^0_\infty)^\vee}}{f_{(N^0_\infty)^\vee}f_{(N^2_\infty)^\vee}f_{(L^1_\infty)^\vee}}\in Q(\La)^\times/\La^\times
\end{equation}
and call it the characteristic element associated with $A/K$ relative to the arithmetic $\ZZ_p$-extension of $K$.

\subsubsection{Arithmetic interpretation} \label{ss:bsdinfty} The characteristic element $f_{A/\Kpinf}$ is related to the arithmetic invariants of $A$ as follows. By considering the
$p$-torsion part of the exact sequence in \cite[\S2.5.2]{KT03} and using the fact that the functor ``take the $p$-primary part'' is exact in the category of finite abelian groups,
we deduce an exact sequence:
\begin{equation} \label{e:alg-arithm0} 0\lr N^0_0\lr A(K)[p^\infty]\lr (\oplus_{v\in Z}\M_v)[p^\infty]\lr N^1_0\lr \Sel_{p^\infty}(A/K)\lr 0, \end{equation}
with $\M_v:=A(K_v)/{\A}(\fm_v)$, where $O_v$ and $\fm_v$ are ring of integers and maximal ideal of $K_v$ and
$${\A}(\fm_v):=\Ker\!\big(A(K_v)= {\A}(O_v)\lr {\A}(k(v))\big).$$
Observe that, since we assumed that $A/K$ has semistable reduction, we can take the group $V_v$ defined in \cite[Proposition 5.13]{KT03} to be equal to ${\A}(\fm_v)$. Since $\A/O_v$ is
smooth we have in fact $\M_v=\A(k(v))$. The finite group $\M_v$ is controlled by the short exact sequence:
$$0\,\lr Q_v\lr \M_v\lr \Phi_v\lr 0,$$
where $\Phi_v$ is the (finite) group of components, $\Phi_v:=({\A}/{\A}^0)(k(v))$, and
$$Q_v={\A}^0(O_v)/({\A}(\fm_v)\cap{\A}^0(O_v))={\A}^0(k(v))$$
by Hensel's lemma. Moreover, since $A/K$ has semistable reduction at $v$, we have a short exact sequence:
$$0\,\lr T_v\lr {\A}^0_v\lr B_v\lr 0,$$
where $T_v$ is a torus and $B_v$ an abelian variety over $k(v)$.\\

Since the N\'eron model functor is stable by \'etale base change, we also have for any $n\geq 0$ an exact sequence:
$$ 0\to N^0_n\to A(K_n)[p^\infty]\to(\oplus_{w\in C_n, w|v\in Z}\M_w)[p^\infty]\to N^1_n\to \Sel_{p^\infty}(A/K_n)\to 0, $$
which induces, by passing to the inductive limit in $n$, an exact sequence:
\begin{equation} \label{e:alg-arithm} 0\to N^0_\infty\to A(\Kpinf)[p^\infty]\to (\oplus_{w\in C_\infty, w|v\in Z}\M_w)[p^\infty]\to N^1_\infty\to \Sel_{p^\infty}(A/\Kpinf)\to 0 \end{equation}
and then, by passing to the Pontryagin dual, an exact sequence of finitely generated torsion $\La$-modules. We set
$$\M_n:=(\oplus_{w\in C_n, w|v\in Z}\M_w)[p^\infty]$$
and $\M_\infty:=\varinjlim \M_n$. By multiplicativity of characteristic elements associated with torsion $\La$-modules, we have:
\begin{equation} \label{alg-arith1} \frac{f_{(N^1_\infty)^\vee}}{f_{(N^0_\infty)^\vee}}=\frac{f_{X_p(A/\Kpinf)}f_{\M_\infty^\vee}^{}}{f_{A(\Kpinf)[p^\infty]^\vee}}. \end{equation}

Let $\zeta_1,\dots,\zeta_l$ be the eigenvalues of the Galois actions of $\Fr_q$ on $T_pA(K^{(p)}_\infty)[p^{\infty}]$. Then by \cite[Proposition 2.3.5]{tan10b} we can write
\begin{equation}\label{e:fvee}
f_{A(K^{(p)}_\infty)[p^{\infty}]^\vee}=\prod_{i=1}^l (1-\zeta_i^{-1}\Fr_q^{-1})\,.
\end{equation}

\begin{lemma} \label{l:minfty} Denote by $g(v)$ the dimension of $B_v$ and let $\beta_{1}^{(v)},..., \beta_{2g(v)}^{(v)}\in \bar\QQ_p $ be the eigenvalues of the Frobenius endomorphism $\tF_{B_v}^{deg(v)}$ (Here we denote $\tF_{B_v}$ the absolute Frobenius of $B_v$ and $deg(v):=[k(v):\FF_p]$). Then
\begin{equation}\label{e:fMinfty}
f_{\M_\infty^\vee}=\prod_{v\in Z}\prod_{i=1}^{2g(v)} ({\beta_{i}^{(v)}}-\Fr_v^{-1})\,.
\end{equation}
\end{lemma}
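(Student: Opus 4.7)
The plan is to reduce the formula to a placewise Iwasawa-theoretic computation: decompose $\M_\infty$ over $v \in Z$, express each local factor as an induction from the decomposition group $\Gamma_v$, then apply the Frobenius characteristic formula \eqref{e:fvee} to the abelian quotient $B_v$ of the N\'eron fiber.

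First I would write $\M_\infty = \bigoplus_{v \in Z} \M_\infty^{(v)}$ with $\M_\infty^{(v)} := \varinjlim_n \bigoplus_{w_n \mid v} \M_{w_n}[p^\infty]$, so by multiplicativity of characteristic elements it suffices to show
$$f_{(\M_\infty^{(v)})^\vee} \equiv \prod_{i=1}^{2g(v)} \bigl(\beta_i^{(v)} - \Fr_v^{-1}\bigr) \pmod{\La^\times}$$
for each $v$. Since $\Kpinf/K$ is unramified, the decomposition group $\Gamma_v\subset\Gamma$ at $v$ is topologically generated by $\Fr_v = \Fr_q^{\deg v}$; if $\deg v = m p^a$ with $\gcd(m,p)=1$, then $[\Gamma:\Gamma_v]=p^a$. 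Fix a place $w\mid v$ in $\Kpinf$, so $k(w)=k(v)\cdot\kpinf$. The transitive $\Gamma$-action on places identifies $\M_\infty^{(v)}$ with the induced module $\mathrm{Ind}_{\Gamma_v}^{\Gamma}\M_w[p^\infty]$; dualising and using that $\La$ is free of rank $p^a$ over $\La_v:=\ZZ_p[[\Gamma_v]]$ gives $(\M_\infty^{(v)})^\vee\cong(\M_w[p^\infty])^\vee\otimes_{\La_v}\La$. Since base change along the free extension $\La_v\hookrightarrow\La$ transports the characteristic element faithfully, it reduces me to computing $f_{(\M_w[p^\infty])^\vee}$ inside $\La_v$.

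Next I would invoke the two N\'eron structure sequences $0\to\A_v^0\to\A_v\to\Phi_v\to 0$ and $0\to T_v\to\A_v^0\to B_v\to 0$, the latter being exact on $k(w)$-points because $H^1(k_n(v),T_v)=0$ by Lang's theorem at every finite subfield $k_n(v)=k(v)\cdot k_n$. In characteristic $p$ over the perfect field $k(w)$, the torus $T_v$ has trivial $p^\infty$-torsion and is $p$-divisible, so the snake lemma gives $\A_v^0(k(w))[p^\infty]\cong B_v(k(w))[p^\infty]$. The group $\Phi_v(k(w))[p^\infty]$ is finite, and any finite $\La_v$-module --- being supported on the height-$2$ maximal ideal --- has trivial characteristic ideal. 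Hence $f_{(\M_w[p^\infty])^\vee} \equiv f_{(B_v(k(w))[p^\infty])^\vee} \pmod{\La_v^\times}$.

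Finally I would apply (the analogue for $B_v/k(v)$ of) formula \eqref{e:fvee} to the $\ZZ_p$-extension $k(w)/k(v)$, obtaining
$$f_{(B_v(k(w))[p^\infty])^\vee}=\prod_{\zeta}\bigl(1-\zeta^{-1}\Fr_v^{-1}\bigr),$$
where $\zeta$ ranges over the eigenvalues of $\Fr_v$ on $T_p B_v$, i.e.\ the unit-root Weil numbers among $\beta_1^{(v)},\dots,\beta_{2g(v)}^{(v)}$, say $\beta_1^{(v)},\dots,\beta_r^{(v)}$. For each unit root, $\beta_i^{(v)}-\Fr_v^{-1}=\beta_i^{(v)}\bigl(1-(\beta_i^{(v)})^{-1}\Fr_v^{-1}\bigr)$ agrees with the corresponding factor up to the unit $\beta_i^{(v)}\in\ZZ_p^\times$. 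For each non-unit root ($v_p(\beta_i^{(v)})>0$), the augmentation of $\beta_i^{(v)}-\Fr_v^{-1}$ is $\beta_i^{(v)}-1\in\ZZ_p^\times$, so the factor itself is already a unit in $\La_v$ and may be inserted at no cost, yielding the claimed product over all $2g(v)$ indices. The step I expect to be most delicate is verifying in the first paragraph that induction really transports the characteristic element, and not some twist or norm of it; freeness of $\La$ over $\La_v$ makes this a routine check, but care is needed to align conventions for $\mathrm{Ind}$ versus $\mathrm{CoInd}$ on Pontryagin duals.
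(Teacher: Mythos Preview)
Your proposal is correct and follows essentially the same route as the paper: decompose over $v\in Z$, identify $\M_{\infty}^{(v)}$ with the module induced from $\Gamma_v$, reduce to $B_v(k(w))[p^\infty]$ via the N\'eron filtration, and then apply the Frobenius-eigenvalue formula for the characteristic element over $\La_v$. The one substantive difference is that you assert as known the identification of the eigenvalues of $\Fr_v$ on $T_pB_v$ with the $p$-adic unit roots among the $\beta_i^{(v)}$, whereas the paper supplies a short self-contained counting argument for this (comparing $|B_v[p^\infty](k(v)_N)|$ with Weil's point-count formula); your appeal to the slope decomposition of the $p$-divisible group is a legitimate shortcut, but you might want to add a one-line justification or reference since the paper evidently did not regard it as entirely obvious.
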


\begin{proof} For $v$ a place of $K$, let $\Gamma_v\subset\Gamma$ denote the decomposition group at $v$ and put $\La_v:=\ZZ_p[[\Gamma_v]]$. Thus we get $\La=\oplus_{\sigma\in \Gamma/\Gamma_v}\sigma\La_v\,.$ For each $v\in Z$, choose a $w_0\in C_\infty$ sitting over $v$. Then
$$\M_{\infty, v}:=\bigoplus_{w\in C_\infty, w|v}\M_w[p^\infty]=\bigoplus_{\sigma\in \Gamma/\Gamma_v} \sigma \M_{w_0}[p^\infty] $$
and hence $\M_{\infty, v}^\vee=\La \otimes_{\La_v} \M_{w_0}[p^\infty]^\vee$. In particular, the characteristic element $f_{\M_{\infty, v}^\vee}$ can be chosen to be that of $\M_{w_0}[p^\infty]^\vee$ over $\La_v$. Also, since $\Phi_v$ is finite and $T_v$ is a torus, $\M_{w_0}[p^\infty]^\vee$ is pseudo-isomorphic to $B_v[p^\infty](k_\infty^{(p)})^\vee$.
For each $v$, we order the eigenvalues $\beta_i^{(v)}$ so that $\beta_i^{(v)}$ is a $p$-adic unit if and only if $i\leq f(v)\leq g(v)$.

Let $\Fr_v\colon x\mapsto x^{q_v}$ denote the Frobenius substitution as an element of $\Gal(\overline{k(v)}/k(v))$ (and also, by abuse of notation, the corresponding element of $\Gal(k_\infty^{(p)}/k(v))$ and of $\Gamma_v$). The product $\beta_{1}^{(v)}\cdot...\cdot\beta_{f(v)}^{(v)}$ is a $p$-adic unit and $\prod_{i>f(v)} ({\beta_{i}^{(v)}}-\Fr_v^{-1})$ is a unit in $\La$.
We claim that $\beta_{1}^{(v)},...,\beta_{f(v)}^{(v)}$ are the eigenvalues of the action of $\Fr_v$ on the Tate module $T_pB_v\,$.
Then by \cite[Proposition 2.3.6]{tan10b} we can write
$$ f_{\M_{\infty, v}^\vee}=\prod_{i=1}^{f(v)} ({\beta_{i}^{(v)}}-\Fr_v^{-1})=\prod_{i=1}^{2g(v)} ({\beta_{i}^{(v)}}-\Fr_v^{-1})$$
and \eqref{e:fMinfty} follows from $f_{\M_\infty^\vee}=\prod_{v\in Z} f_{\M_{\infty,v}^\vee}$

We have to prove the claim. Let $\rho\colon\mathrm{End}_{k(v)}(B_v)\rightarrow \mathrm{End} (T_pB_v)$ denote the $p$-adic representation. Then for every $f\in \mathrm{End}_{k(v)}(B_v)$ the eigenvalues of $\rho(f)$, counting multiplicities, are a portion of those of $f$ (this can be seen e.g.~mimicking the argument in the proof of \cite[Theorem 12.18]{gm13}, with $\Ker(f)$ replaced by its maximal \'etale subgroup). In particular, we can rearrange the order of the $\beta_i^{(v)}$'s so that, for every positive integer $N$, the eigenvalues of $\rho(\tF_{B_v,q_v}^N)=\Fr_v^N$ equal $(\beta_1^{(v)})^N,...,(\beta_{h(v)}^{(v)})^N$ for some $h(v)\leq f(v)$. Let $k(v)_N$ denote the degree $N$ extension of $k(v)$. Letting $\equiv_p$ denote congruence modulo $p$-adic units, we have
$$\prod_{i=1}^{h(v)}(1-(\beta_{i}^{(v)})^N)\equiv_p|B_v[p^\infty](k(v)_N)|\equiv_p\prod_{i=1}^{2g(v)}(1-(\beta_{i}^{(v)})^N)\equiv_p\prod_{i=1}^{f(v)}(1-(\beta_{i}^{(v)})^N)\,,$$
where the second equality is from Weil's formula and the third is from the fact that $1-(\beta_{i}^{(v)})^N$ is a $p$-adic unit if $i>f(v)$. Taking $N$ such that $(\beta_{i}^{(v)})^N\equiv 1$ mod $p$ for all $i<f(v)$, we deduce $h(v)=f(v)$.
\end{proof}

Write $\AA_K$ for the adelic ring. Let $\mu=(\mu_v)_v$ be the Haar measure on $Lie({\A})( \AA_K)$ such that
$$\mu_v(Lie({\cal A})(O_v)):=1$$ for every $v$ and let $\alpha_v$ denote the Haar measure on $A(K_v)=\A(O_v)$ such that for $n\geq 1$
$$\alpha_v(\A(\fm_v^n)):=\mu_v (Lie({\cal A})(\fm_v^n)).$$
Then since $|\A(O_v)/\A(\fm_v)|=|\M_v|$ and $Lie({\cal A})(O_v)/Lie({\cal A})(\fm_v)\simeq (O_v/\fm_v)^g$, we have
\begin{equation}\label{e:alpgaM}
\alpha_v(\A(O_v))=|\M_v|\cdot q_v^{-g}.
\end{equation}
By \cite[p.552]{KT03} we have the relation:
\begin{equation} \label{e:tamagawa} |\M_0|\cdot |L^0_0|\cdot |L^1_0|^{-1}= \mu(Lie(\A)( \AA_K)/Lie(\A)( K))^{-1}\cdot\prod_{v\in Z}\alpha_v(\A(O_{v}))\,. \end{equation}
Thus, by \eqref{e:alpgaM} and \eqref{e:tamagawa}
\begin{equation}\label{e:tamagawa2} |L^0_0|\cdot|L^1_0|^{-1}=q^{-g\deg (Z)}\cdot \mu(Lie({\cal A})( \mathbb{A}_K)/ Lie({\cal A})( K))^{-1}. \end{equation}

Next, we choose a a basis $e_1,...,e_g$ of the $K$-vector space $Lie(A)(K)=Lie({\A})(K)$. Then for every $v$ the exterior product $e:=e_1\wedge\cdots\wedge e_g$
determines the Haar measure $\mu_v^{(e)}$ on $ Lie({\A})(K_v)$ that has measure $1$ on the compact subset $\fL(O_v):=O_ve_1+\cdots + O_ve_g$. Similarly, if we choose a a basis $f_1,...,f_g$ of $Lie({\A})(O_v)$ over $O_v$, then the exterior product $f_v:=f_{1 v}\wedge\cdots\wedge f_{g v}$ actually determines the Haar measure $\mu_v$. Define the number $\delta$ by
\begin{equation}\label{e:deltaratio}
q^{-\delta}:=\prod_{\text{all}\; v} \frac{\mu_v^{(e)}(Lie({\A})(O_v))}{\mu_v(Lie({\A})(O_v))}= \prod_{\text{all}\; v} \mu_v^{(e)}(Lie({\A})(O_v))\,, \end{equation}
so that the Haar measures $\mu$ and $\mu^{(e)}$ are related by $\mu^{(e)}=q^{-\delta}{\mu}\,.$
\footnote{If $g=1$ and $\Delta$ denote the global discriminant, then $\delta=\frac{\deg (\Delta)}{12}$ (see e.g.~ \cite[eq.~(9)]{tan95v}).}
By a well-known computation (see e.g.~\cite[VI, Corollary 1 of Theorem 1]{We73}) one finds
$$\mu^{(e)}(Lie({\cal A})( \mathbb{A}_K)/Lie({\cal A})(K))=q^{g(\kappa-1)},$$
with $\kappa$ the genus of $C/\FF$, whence, by \eqref{e:tamagawa2} and \eqref{e:deltaratio},
\begin{equation}\label{e:tamagawa3}
|L^0_0|\cdot|L^1_0|^{-1}=q^{-g(\deg (Z)+\kappa-1)-\delta}.
\end{equation}

\begin{lemma}\label{l:l0l1}
Under the above notation we can write
$$\frac{f_{(L_\infty^0)^\vee}}{f_{(L_\infty^1)^\vee}}=q^{-g(\deg (Z)+\kappa-1)-\delta}.$$
\end{lemma}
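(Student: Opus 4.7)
The plan is to reduce the statement to the cardinality computation already packaged in equation \eqref{e:tamagawa3}. First, I would invoke Corollary \ref{M,L}(2), which provides an isomorphism of $\La$-modules $(L^i_\infty)^\vee \simeq \bigoplus_{j=1}^{d(L^i_0)} \La/p\La$ for $i=0,1$. Since the characteristic element of the cyclic module $\La/p\La$ is simply $p$, this immediately yields $f_{(L^i_\infty)^\vee} \equiv p^{d(L^i_0)} \pmod{\La^\times}$, and so
\[
\frac{f_{(L^0_\infty)^\vee}}{f_{(L^1_\infty)^\vee}} \equiv p^{d(L^0_0)-d(L^1_0)} \pmod{\La^\times}.
\]

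Next, using the assertion of item (7) of \S\ref{ss:cohomtheories} that $L^i_0$ is a finite $\FF_p$-vector space of rank $d(L^i_0)$, one has the elementary identity $|L^i_0| = p^{d(L^i_0)}$. Hence the ratio displayed above coincides with $|L^0_0| \cdot |L^1_0|^{-1}$. I would then conclude by appealing directly to the already derived equation \eqref{e:tamagawa3}, which identifies precisely this ratio with $q^{-g(\deg(Z)+\kappa-1)-\delta}$, giving the asserted equality in $Q(\La)^\times/\La^\times$.

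The main obstacle is essentially non-existent: the lemma is a bookkeeping step that combines the structural description of $(L^i_\infty)^\vee$ coming from the Lang-type acyclicity in Lemma \ref{linalg1} with the Haar-measure / Riemann--Roch computation carried out just above to obtain \eqref{e:tamagawa3}. The only point requiring any care is matching $\FF_p$-dimensions with $p$-power cardinalities, which is immediate from the $\FF_p$-vector-space structure of $L^i_0$ recorded in item (7) of \S\ref{ss:cohomtheories}; everything analytic has already been absorbed into the passage from \eqref{e:tamagawa} to \eqref{e:tamagawa3}.
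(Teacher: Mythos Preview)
Your proposal is correct and follows essentially the same approach as the paper's proof, which simply cites the base-change isomorphism $L_\infty^i\simeq L_0^i\otimes_{\ZZ_p} W(\kpinf)$, Lemma~\ref{linalg1}, and equation~\eqref{e:tamagawa3}. You have merely unpacked these ingredients explicitly (via Corollary~\ref{M,L}(2), which is itself a consequence of Lemma~\ref{linalg1}), and your matching of $p^{d(L^i_0)}$ with $|L^i_0|$ is exactly the content the paper alludes to by ``as well as its proof.''
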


\begin{proof}
Since $L_\infty^i\simeq L_0^i\otimes_{\ZZ_p} W(k_\infty^{(p)})$, the lemma follows from \eqref{e:tamagawa3} and Lemma \ref{linalg1} (as well as its proof).
\end{proof}

Finally, by \cite[2.5.3]{KT03}, we have for any $n\geq 0$ an isomorphism:
\begin{equation} \label{e:KT253} N^2_n\simeq \Sel_{\ZZ_p}(A^t/K_n)^\vee, \end{equation}
where $\Sel_{\ZZ_p}(.):=\varprojlim \Sel_{p^n}(.)$ denotes the compact Selmer group as in \cite[\S2.3]{KT03}. These isomorphisms induce, when passing to the inductive limit, an isomorphism
\begin{equation} \label{e:N2compactSel} (N^2_\infty)^\vee\simeq \Sel_{\ZZ_p}(A^t/\Kpinf):=\ilim \Sel_{\ZZ_p}(A^t/K_n)\,. \end{equation}
Let $T_p(A(K^{(p)}_\infty)):=\varprojlim A(K^{(p)}_\infty)[p^{m}]$ denote the Tate-module of $A(K^{(p)}_\infty)$.

\begin{proposition} \label{p:N2trivial}
If $A_{p^\infty}(\Kpinf)$ is a finite group, then $N^2_\infty=0$.
In general, we can write
$$f_{(N^2_{\infty})^\vee}=f_{T_p(A^t(K^{(p)}_\infty))}.$$
\end{proposition}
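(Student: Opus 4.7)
The plan starts from the identification \eqref{e:N2compactSel}, which gives $(N^2_\infty)^\vee \simeq \Sel_{\ZZ_p}(A^t/\Kpinf)$, so that both assertions are equivalent to statements about the compact Selmer group of $A^t$ over $\Kpinf$: the first claim becomes $\Sel_{\ZZ_p}(A^t/\Kpinf)=0$, and the second becomes $f_{\Sel_{\ZZ_p}(A^t/\Kpinf)}=f_{T_p(A^t(\Kpinf))}$. At each finite level $K_n$, Kummer theory together with the Lang--N\'eron finite generation of $A^t(K_n)$ yields the standard exact sequence
$$0 \lra A^t(K_n)\otimes_{\ZZ}\ZZ_p \lra \Sel_{\ZZ_p}(A^t/K_n) \lra T_p\Sha(A^t/K_n) \lra 0,$$
and the plan is to pass to the inverse limit in $n$, keeping track of any derived-inverse-limit contribution.

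The two pieces are then analyzed separately. On the Mordell--Weil side, $\varprojlim_n(A^t(K_n)\otimes\ZZ_p)$ under norm maps splits into a free part and a $p$-torsion part: since the norm acts as multiplication by $p$ on a point already rational over a lower level and the Mordell--Weil ranks stay bounded along the constant field $\ZZ_p$-extension, the free part is killed in the limit, and what survives is exactly $\varprojlim_n A^t(K_n)[p^\infty]=T_p(A^t(\Kpinf))$. On the Shafarevich side, $\varprojlim_n T_p\Sha(A^t/K_n)$ records only the maximal $p$-divisible quotient of $\Sha(A^t/\Kpinf)[p^\infty]$; using that $X_p(A/\Kpinf)$ is $\La$-torsion by Corollary \ref{1case} together with the standard structure theory of finitely generated torsion $\La$-modules, this divisible piece is pseudo-null as a $\La$-module and therefore contributes trivially to the characteristic element. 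Combining the two produces a pseudo-isomorphism $\Sel_{\ZZ_p}(A^t/\Kpinf)\sim T_p(A^t(\Kpinf))$ of finitely generated $\La$-modules, which gives the desired equality $f_{(N^2_\infty)^\vee}=f_{T_p(A^t(\Kpinf))}$.

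For the stronger first assertion, under the hypothesis that $A_{p^\infty}(\Kpinf)$ is finite, the Weil pairing identifies $A^t[p^m]$ with the Cartier dual of $A[p^m]$, so $A^t_{p^\infty}(\Kpinf)$ is finite as well and hence $T_p(A^t(\Kpinf))=0$. Since pseudo-nullity would only force triviality of the characteristic element rather than of the module, the plan is to upgrade the previous analysis to an actual vanishing: under the finiteness hypothesis the $p$-divisible part of $\Sha(A^t/\Kpinf)[p^\infty]$ is itself trivial, and a direct control-theoretic argument on $\Sel_{\ZZ_p}(A^t/K_n)$ combined with the vanishing of both $T_p(A^t(\Kpinf))$ and of the inverse limit of the $T_p\Sha$-terms then gives $\Sel_{\ZZ_p}(A^t/\Kpinf)=0$, whence $N^2_\infty=0$.

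The main obstacle is the pseudo-nullity of $\varprojlim_n T_p\Sha(A^t/K_n)$ over $\La$: this amounts to a careful analysis of the $p$-divisible part of the Tate--Shafarevich group in the unramified $\ZZ_p$-direction, together with the handling of derived-inverse-limit terms coming from both the Mordell--Weil and the Shafarevich systems. A secondary technical point is the identification of $\varprojlim_n(A^t(K_n)\otimes\ZZ_p)$ with $T_p(A^t(\Kpinf))$ up to pseudo-null error, which requires control of the free part of the Mordell--Weil groups along the constant field extension.
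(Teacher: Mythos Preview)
Your approach differs from the paper's in the choice of exact sequence at each finite level. You use the Mordell--Weil/Tate--Shafarevich decomposition
\[
0 \lra A^t(K_n)\otimes_{\ZZ}\ZZ_p \lra \Sel_{\ZZ_p}(A^t/K_n) \lra T_p\Sha(A^t/K_n) \lra 0,
\]
whereas the paper uses instead the sequence
\[
0 \lra A^t(K_n)[p^\infty] \lra \Sel_{\ZZ_p}(A^t/K_n) \lra V_n \lra 0,
\]
with $V_n:=\varprojlim_m \Sel_{p^\infty}(A^t/K_n)_{div}[p^m]$, obtained from $0\to A^t(K_n)[p^\infty]/p^m\to\Sel_{p^m}(A^t/K_n)\to\Sel_{p^\infty}(A^t/K_n)[p^m]\to 0$ by passing to the limit in $m$. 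The point of this alternative decomposition is that the left-hand term is already finite, so no analysis of the free Mordell--Weil part is needed, and the key step becomes showing $\varprojlim_n V_n=0$ \emph{unconditionally}. The paper does this directly from $\La$-cotorsionness of $\Sel_{p^\infty}(A^t/\Kpinf)$: the divisible parts $\Sel_{p^\infty}(A^t/K_n)_{div}$ stabilise for large $n$ (their direct limit is $\ZZ_p$-cofinitely generated and the restriction maps have finite kernel), so $V_r\simeq V_n$ for $r\geq n\gg0$, and under this identification the corestriction $V_r\to V_n$ is multiplication by $p^{r-n}$. This yields the \emph{exact} identification $(N^2_\infty)^\vee=\varprojlim_n A^t(K_n)[p^\infty]$, from which both assertions of the proposition follow at once (the second via \cite[Proposition~2.3.5]{tan10b}).

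Your route can be pushed through for the characteristic-element statement, but your argument for the first assertion contains a genuine gap. The claim ``under the finiteness hypothesis the $p$-divisible part of $\Sha(A^t/\Kpinf)[p^\infty]$ is itself trivial'' is not justified: finiteness of $A^t_{p^\infty}(\Kpinf)$ says nothing about the divisible part of $\Sha$ --- a $\La$-cotorsion Selmer group can perfectly well have $p$-divisible part isomorphic to $(\QQ_p/\ZZ_p)^r$ with $r>0$. What actually makes $\varprojlim_n T_p\Sha(A^t/K_n)$ vanish is the same stabilisation argument as above (applied to the quotient $\Sha$ in place of $\Sel$), and that argument works unconditionally; it neither needs nor yields triviality of the divisible part of $\Sha(A^t/\Kpinf)$. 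In short, the obstacle you flag as ``main'' is resolved not by pseudo-nullity but by an exact vanishing of the inverse limit, and the paper's choice of decomposition makes this transparent while avoiding the separate treatment of the free Mordell--Weil contribution and the upgrade from pseudo-isomorphism to equality. (Incidentally, the passage from finiteness of $A_{p^\infty}(\Kpinf)$ to that of $A^t_{p^\infty}(\Kpinf)$ is more directly handled via the isogeny $A\sim A^t$ than via Cartier duality.)
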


\begin{proof} For simplicity denote $D_n:=A^t(K_n)[p^{\infty}]$. Also, write $V_n:=\varprojlim_m \Sel_{p^{\infty}}(A^t/K_n)[p^m]$. Since $\Sel_{p^{\infty}}(A^t/K_n)$ is cofinitely generated over $\ZZ_p\,$, actually
$$V_n=\varprojlim_m \Sel_{p^{\infty}}(A^t/K_n)_{div}[p^m].$$

For each $m$ we have the exact sequence
$$\xymatrix{0 \ar[r] & D_n/p^m D_n \ar[r] & \Sel_{p^{m}}(A^t/K_n) \ar[r] & \Sel_{p^{\infty}}(A^t/K_n)[p^m]\ar[r] & 0},$$
which induces, by taking $m\rightarrow \infty$, the exact sequence
$$\xymatrix{0 \ar[r] & D_n \ar[r] & \Sel_{\ZZ_p}(A^t/K_n) \ar[r] & V_n\ar[r] & 0}.$$
For each $n$, let $\Sel_{p^{\infty}}(A/K_n)_{div}$ denote the $p$-divisible part of $\Sel_{p^{\infty}}(A/K_n)$ and let $Y_p(A/K_n)$ be its Pontryagin dual. Also, let $Y_p(A/\Kpinf)$ denote the projective limit of $\{Y_p(A/K_n)\}_n$, so that
$$Y_p(A/\Kpinf) = \Sel_{div}(A/\Kpinf)^\vee,$$
where
$$\Sel_{div}(A/\Kpinf):=\varinjlim_{n}\Sel_{p^{\infty}}(A/K_n)_{div}\,.$$

Since $\Sel_{p^\infty}(A^t/\Kpinf)$ is cotorsion over $\La$, the divisible subgroup $\Sel_{div}(A^t/\Kpinf)$ must be cofinitely generated over $\ZZ_p\,$. Thus, for $n$ sufficiently large the restriction map $\Sel_{p^{\infty}}(A^t/K_n)_{div}\rightarrow \Sel_{div}(A^t/\Kpinf)$ is surjective. The kernel of this map is finite (note that we can show as in \cite{gr03} that the groups $\coh^1\!\big(\Gamma^{(n)},A^t_{p^{\infty}}(\Kpinf)\big)$ are finite). It follows that if $n$ is sufficiently large then the restriction map $\Sel_{p^{\infty}}(A^t/K_n)_{div}\rightarrow \Sel_{p^{\infty}}(A^t/K_r)_{div} $ is an isomorphism for every $r>n$, and hence $V_r$ can be identified with $V_n$. This implies $\varprojlim_n V_n=0$ as the map $V_r\rightarrow V_n$ becomes multiplication by $p^{r-n}$ on $V_r$ for sufficiently large $r$, $n$. Hence, \eqref{e:N2compactSel} and the above exact sequence yield
$$(N^2_\infty)^\vee=\varprojlim_n \Sel_{\ZZ_p}(A^t/K_n)=\varprojlim_n A^t(K_n)[p^{\infty}].$$
This proves the first assertion, while the second follows from \cite[Proposition 2.3.5]{tan10b}.
\end{proof}

%\begin{remark}{\em An alternative proof of the first assertion of Proposition \ref{p:N2trivial} can be obtained following the same argument used in the proof of \cite[Lemma 5]{P87}. Also, as we already mentioned in Remark \ref{r:fintors}, the condition that $A_{p^\infty}(\Kpinf)$ is a finite group is often satisfied. }
%\end{remark}

Since $A$ and $A^t$ are isogenous, the Galois actions of $\Fr_q$ on both $T_p(A(K^{(p)}_\infty))$ and $T_p(A^t(K^{(p)}_\infty))$ have the same eigenvalues.
Then, by \cite[Proposition 2.3.5]{tan10b} again, we can write
\begin{equation}\label{e:ftp}
f_{T_p(A^t(K^{(p)}_\infty))}=\prod_{i=1}^l (1-\zeta_i^{-1}\Fr_q)\,.
\end{equation}

\subsubsection{The value of $\star_{A,\Kpinf}$} Summarizing the above and observing that both \eqref{e:fMinfty} and \eqref{e:ftp} are units, we can make more precise the statement of Theorem \ref{t:summary}. Recall that, in the notation of our Introduction, $c_{A/\Kpinf}=f_{X_p(A/K_{\infty}^{(p)})}\,$.

\begin{proposition}\label{p:summarize} Let notation be as above. Then we can write
$$f_{A/K_{\infty}^{(p)}}\equiv_{\La^\times}\star_{A,K_{\infty}^{(p)}}\cdot c_{A/\Kpinf}.$$
with
$$\star_{A,K_{\infty}^{(p)}}=\frac{q^{-g(\deg(Z)+\kappa-1)-\delta}\cdot\prod_{v\in Z}\prod_{i=1}^{2g(v)} (\beta_i^{(v)}- \Fr_v^{-1})}{\prod_{i=1}^l(1-\zeta_i^{-1}\Fr_q)(1-\zeta_i^{-1}\Fr_q^{-1})}.$$
\end{proposition}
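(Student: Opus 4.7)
The plan is essentially an accounting exercise: every ingredient needed for the statement has already been computed in the preceding lemmas, and I just have to substitute them into the definition
$$f_{A/\Kpinf}=\frac{f_{(N^1_\infty)^\vee}\,f_{(L^0_\infty)^\vee}}{f_{(N^0_\infty)^\vee}\,f_{(N^2_\infty)^\vee}\,f_{(L^1_\infty)^\vee}}$$
given in \eqref{e:charelement}, and then read off $\star_{A,\Kpinf}$ as the ratio $f_{A/\Kpinf}/c_{A/\Kpinf}$ in $Q(\La)^\times/\La^\times$.

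First, I would handle the $N^0, N^1$ contribution. The Pontryagin dual of the long exact sequence \eqref{e:alg-arithm}, together with multiplicativity of characteristic elements on short exact sequences of torsion $\Lambda$-modules, gives relation \eqref{alg-arith1}, namely
$$\frac{f_{(N^1_\infty)^\vee}}{f_{(N^0_\infty)^\vee}}=\frac{c_{A/\Kpinf}\cdot f_{\M_\infty^\vee}}{f_{A(\Kpinf)[p^\infty]^\vee}},$$
using $c_{A/\Kpinf}=f_{X_p(A/\Kpinf)}$. Next, Lemma \ref{l:l0l1} immediately yields
$$\frac{f_{(L^0_\infty)^\vee}}{f_{(L^1_\infty)^\vee}}\equiv q^{-g(\deg(Z)+\kappa-1)-\delta}\pmod{\La^\times},$$
and Proposition \ref{p:N2trivial} identifies $f_{(N^2_\infty)^\vee}$ with $f_{T_p(A^t(\Kpinf))}$.

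At this point the quotient $f_{A/\Kpinf}/c_{A/\Kpinf}$ becomes
$$\frac{q^{-g(\deg(Z)+\kappa-1)-\delta}\cdot f_{\M_\infty^\vee}}{f_{A(\Kpinf)[p^\infty]^\vee}\cdot f_{T_p(A^t(\Kpinf))}}.$$
It remains to substitute the explicit expressions: \eqref{e:fvee} gives $f_{A(\Kpinf)[p^\infty]^\vee}=\prod_{i=1}^{l}(1-\zeta_i^{-1}\Fr_q^{-1})$; \eqref{e:ftp} gives $f_{T_p(A^t(\Kpinf))}=\prod_{i=1}^{l}(1-\zeta_i^{-1}\Fr_q)$ (using that $A$ and $A^t$ are isogenous, so $\Fr_q$ has the same eigenvalues on the two Tate modules); and Lemma \ref{l:minfty} gives $f_{\M_\infty^\vee}=\prod_{v\in Z}\prod_{i=1}^{2g(v)}(\beta_i^{(v)}-\Fr_v^{-1})$. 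Plugging in produces exactly the announced formula for $\star_{A,\Kpinf}$.

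There is no genuine obstacle here: the technical heart of the proof was already carried out in \eqref{alg-arith1}, Lemma \ref{l:minfty}, Lemma \ref{l:l0l1} and Proposition \ref{p:N2trivial}. The only point requiring a little care is that all equalities are to be interpreted in $Q(\La)^\times/\La^\times$, so one must verify that the factors being cancelled or moved across indeed lie in the right groups; in particular the units $\prod_{i>f(v)}(\beta_i^{(v)}-\Fr_v^{-1})$ appearing implicitly in Lemma \ref{l:minfty} and the unit factors in \eqref{e:fvee}/\eqref{e:ftp} are harmless modulo $\La^\times$, which is why the statement of Proposition \ref{p:summarize} is well defined.
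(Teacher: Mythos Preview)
Your proposal is correct and follows essentially the same route as the paper: the proposition is stated there simply as a summary of the preceding computations (\eqref{alg-arith1}, \eqref{e:fvee}, Lemma~\ref{l:minfty}, Lemma~\ref{l:l0l1}, Proposition~\ref{p:N2trivial}, and \eqref{e:ftp}), and you have spelled out that substitution explicitly. The paper's only additional remark is the observation that \eqref{e:fMinfty} and \eqref{e:ftp} are units, which you also touch on at the end.
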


\section{The Main Conjecture} \label{s:semistab}

In this section we give a geometric analogue of the Iwasawa Main Conjecture of abelian varieties with semistable reduction. We keep the notations and the hypotheses of Section \ref{s:genord}.

\subsection{Interpolation and the Main Conjecture}

\subsubsection{The modules $P^i_\infty$} \label{ss:Pinfty}
Let $\QQ_{p,n}$ denote the fraction field of $W(k_n)$ and $\QQ_{p,\infty}:=\cup_n \QQ_{p,n}$.

\begin{lemma}\label{induce} For any $i$, the map
$$\big({\bf 1}\otimes W(\kpinf)\big)\!^\vee\left[\frac{1}{p}\right]\colon(M^i_{2,\infty})^\vee\left[\frac{1}{p}\right]\lr (M^i_{1,\infty})^\vee\left[\frac{1}{p}\right]$$
is an isomorphism induced by the identity on $(P^i_0[\frac{1}{p}]\otimes_{\QQ_p} \QQ_{p,\infty})^\vee$.
\end{lemma}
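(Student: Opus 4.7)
The strategy is to leverage Lemma \ref{cofinite}, which provides two maps $f_1,f_2\colon P^i_0[\frac{1}{p}]\to M^i_{k,0}$ with $\ZZ_p$-lattice kernels $\Lambda_k$ and finite cokernels, together with the commutative square forcing ${\bf 1}_0\circ f_1=f_2$. The core observation is that this commutativity says ${\bf 1}_0$ is literally induced by the identity on $P^i_0[\frac{1}{p}]$: from $f_2=\mathbf{1}_0\circ f_1$ we get $\Lambda_1\subseteq\Lambda_2$, and ${\bf 1}_0$ agrees, on the finite-index submodule $\tilde M_k:=f_k(P^i_0[\frac{1}{p}])=P^i_0[\frac{1}{p}]/\Lambda_k$, with the canonical projection $P^i_0[\frac{1}{p}]/\Lambda_1\twoheadrightarrow P^i_0[\frac{1}{p}]/\Lambda_2$ whose kernel $\Lambda_2/\Lambda_1$ is finite (since two $\ZZ_p$-lattices in the same $\QQ_p$-space are commensurable). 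Combining with the finite indices $[M^i_{k,0}:\tilde M_k]$, the map ${\bf 1}_0$ has finite kernel and finite cokernel.

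First I would base change the whole diagram along $W(\FF)\to W(\kpinf)$. Using the isomorphisms \eqref{e:pi0n} and \eqref{e:mi0infty}, which identify ${\bf 1}\otimes W(\kpinf)$ on $M^i_{k,\infty}$ with ${\bf 1}_0\otimes\operatorname{id}$ on $M^i_{k,0}\otimes W(\kpinf)$, we obtain maps
$$\tilde f_k\colon P^i_0\textstyle[\frac{1}{p}]\otimes_{W(\FF)[1/p]}\QQ_{p,\infty}\lra M^i_{k,\infty}$$
whose kernels are $\QQ_{p,\infty}$-sublattices (i.e.\ the full domain, after inverting $p$) and whose cokernels, although no longer finite over $\ZZ_p$, are $p$-power torsion and vanish after inverting $p$. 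The induced diagram still commutes.

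Next I would apply Pontryagin duality and invert $p$. By Corollary \ref{M,L}, $(M^i_{k,\infty})^\vee$ is $\La$-finitely generated of rank $r_i=\dim_{\QQ_p}P^i_0[\frac{1}{p}]$ with $p$-power torsion part; hence $(M^i_{k,\infty})^\vee[\frac{1}{p}]$ is a free $\La[\frac{1}{p}]$-module of rank $r_i$. Dualizing the factorizations
$$\tilde f_k\colon P^i_0\textstyle[\frac{1}{p}]\otimes\QQ_{p,\infty}\twoheadrightarrow\tilde M_{k,\infty}\hookrightarrow M^i_{k,\infty}\,,$$
and using that both the kernel and cokernel of each arrow vanish after inverting $p$, each $\tilde f_k^\vee[\frac{1}{p}]$ becomes an isomorphism
$$\tilde f_k^\vee\textstyle[\frac{1}{p}]\colon (M^i_{k,\infty})^\vee[\frac{1}{p}]\,\iso\,\big(P^i_0[\frac{1}{p}]\otimes_{\QQ_p}\QQ_{p,\infty}\big)^\vee.$$
Finally, from $\tilde f_2=({\bf 1}\otimes W(\kpinf))\circ\tilde f_1$ (after base change, the image of $\operatorname{id}$ on top), applying $(-)^\vee[\frac{1}{p}]$ yields
$$\tilde f_2^\vee\textstyle[\frac{1}{p}]=\tilde f_1^\vee[\frac{1}{p}]\circ ({\bf 1}\otimes W(\kpinf))^\vee[\frac{1}{p}],$$
so $({\bf 1}\otimes W(\kpinf))^\vee[\frac{1}{p}]=\tilde f_1^\vee[\frac{1}{p}]^{-1}\circ\tilde f_2^\vee[\frac{1}{p}]$. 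Under the canonical identifications provided by the two $\tilde f_k^\vee[\frac{1}{p}]$, this is the identity on $(P^i_0[\frac{1}{p}]\otimes_{\QQ_p}\QQ_{p,\infty})^\vee$.

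The only delicate point is bookkeeping around the base rings: $W(\FF)[\frac{1}{p}]$ versus $\QQ_p$, and making sure that tensor products, inversion of $p$, and Pontryagin duality all commute with the base change $W(\FF)\to W(\kpinf)$ in the ways claimed. This is routine given that $W(\kpinf)$ is $\ZZ_p$-flat and the modules involved are either finite type over $\La$ or finite-dimensional over $\QQ_{p,\infty}$, so no truly substantial obstacle arises beyond careful identification of duals.
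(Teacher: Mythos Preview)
Your approach is essentially the paper's: use Lemma~\ref{cofinite} to relate $M^i_{k,\infty}$ to $P^i_0[\frac{1}{p}]\otimes\QQ_{p,\infty}$ via the maps $f_k$, then dualize and invert $p$. There is, however, one genuine error in the execution.

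You assert that each $\tilde f_k^\vee[\frac{1}{p}]$ is an \emph{isomorphism} onto $(P^i_0[\frac{1}{p}]\otimes_{\QQ_p}\QQ_{p,\infty})^\vee$, justified by ``both the kernel and cokernel of each arrow vanish after inverting $p$''. This is false for the surjection $P^i_0[\frac{1}{p}]\otimes\QQ_{p,\infty}\twoheadrightarrow\tilde M_{k,\infty}$: its kernel is $B^i_0\otimes_{\ZZ_p}W(\kpinf)\simeq W(\kpinf)^{r_i}$, whose Pontryagin dual is certainly not $p$-torsion (indeed, later in the paper one sees $(P^i_0[\frac{1}{p}]\otimes\QQ_{p,\infty})^\vee\simeq\QQ_p[[\Gamma]]^{r_i}$, strictly larger than the $\La[\frac{1}{p}]$-free module $(M^i_{k,\infty})^\vee[\frac{1}{p}]$). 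What survives is only that $\tilde f_k^\vee[\frac{1}{p}]$ is an \emph{injection}
\[
(M^i_{k,\infty})^\vee\!\left[\tfrac{1}{p}\right]\;\hookrightarrow\;(P^i_0[\tfrac{1}{p}]\otimes_{\QQ_p}\QQ_{p,\infty})^\vee,
\]
exactly as in the paper's exact sequence \eqref{exact1}. Once you downgrade to injections, the rest of your argument works: the commutative square from Lemma~\ref{cofinite}(2) shows $({\bf 1}\otimes W(\kpinf))^\vee[\frac{1}{p}]$ is the restriction of the identity on the ambient dual, and the fact that it is an isomorphism follows either from your own first-paragraph observation that $\mathbf 1_0$ has finite kernel and cokernel (hence, after $-\otimes W(\kpinf)$, dualizing, and inverting $p$, becomes invertible), or equivalently from \eqref{L2} together with Corollary~\ref{M,L}(2), which is how the paper phrases it.
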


\begin{proof} The first assertion follows from the exact sequence \eqref{L2} and from Corollary \ref{M,L} (2). To show that the map is induced by the identity on $(P^i_0[\frac{1}{p}]\otimes_{\QQ_p}
\QQ_{p,\infty})^\vee$, note that, by Lemma \ref{cofinite} (1), for all $n$ and for $k=1,2$ there exist some exact sequences:
$$0\to B^i_0\otimes_{\ZZ_p} W(k_n)\to P^i_0[p^{-1}]\otimes_{\QQ_p} \QQ_{p,n}\to M^i_{k,0}\otimes_{\ZZ_p} W(k_n)\to C^i_0\otimes_{\ZZ_p} W(k_n) \to 0$$
where $B^i_0$ is a lattice of $P^i_0[\frac{1}{p}]$ and $C^i_0$ a finite abelian $p$-group.

Passing to the inductive limit in $n$, then to the Pontryagin dual, and finally by inverting $p$, we obtain an exact sequence:
\begin{equation}\label{exact1} 0\lr (M^i_{k,\infty})^\vee[p^{-1}]\lr (P^i_0[p^{-1}]\otimes_{\QQ_p}\QQ_{p,\infty})^\vee \lr (\dlim B^i_n)^\vee[p^{-1}]\lr 0,\end{equation}
since $\big(\varinjlim C^i_0\otimes_{\ZZ_p} W(k_n)\big)^\vee$ is $p$-torsion (actually, by Lemma \ref{linalg1}, it is a finite direct sum of $(\ZZ/p^{n_j}\ZZ)[[\Gamma]]$).
In particular, $(M^i_{k,\infty})^\vee[\frac{1}{p}]$ is a submodule of $(P^i_0[\frac{1}{p}]\otimes_{\QQ_p} \QQ_{p,\infty})^\vee$ for $k=1,2$ and the map
$({\bf 1}\otimes W(\kpinf))^\vee[\frac{1}{p}]$ is compatible with the identity on $(P^i_0[\frac{1}{p}]\otimes_{\QQ_p} \QQ_{p,\infty})^\vee$ by Lemma \ref{cofinite}, (2).
\end{proof}

As a consequence of the previous lemma, we denote $P^i_\infty$ the module $(M^i_{2,\infty})^\vee[\frac{1}{p}]$, endowed with the operator $\Phi_i:=\big({\bf 1}\otimes W(\kpinf))^\vee[\frac{1}{p}]\big)^{-1}\circ\big((\varphi_{i,0}\otimes\Fr_q)^\vee[\frac{1}{p}]\big)$. First, we define the $p$-adic $L$-function.

\subsubsection{The $p$-adic $L$-function} \label{su:padicL} By Corollary \ref{M,L}, (1), we know that the $P^i_\infty$'s are free $\La[\frac{1}{p}]$-modules of finite rank. We set
$$\cL_{A/\Kpinf}:=\prod_{i=0}^2 {\det}_{\La[\frac{1}{p}]}\left(id-\Phi_i,P^i_\infty\right)^{(-1)^{i+1}}$$
and call it the $p$-adic $L$-function associated with $A/K$ relative to the arithmetic $\ZZ_p$-extension of $K$.

%Let $L(U,A,s)$, for $s\in\CC$, denote
%the Hasse-Weil $L$-function of $A/K$ without Euler factors outside $U$ (see \cite[\S 1.3]{KT03}).

%\begin{remark} {\em It will be interesting to compare our $p$-adic $L$-function with the one constructed by Pal. Let us recall briefly his construction: Let $E/K$ be an elliptic curve with split multiplicative reduction at some place $\infty$ and let $\Gamma_\infty$ be the Galois group of the maximal abelian extension of $K$ unramified outside $\infty$. Then for any $\Gamma^{(p)}=\ZZ_p$-extension of $F$ unramified outside $\infty$ and with topological generator $\gamma$, we have a natural projection $\Gamma_\infty\to \Gamma^{(p)}$. Pal constructs some element $\cL_E^\infty\in\ZZ_p[[\Gamma_\infty]]\otimes\QQ_p$. Now, if $\Gamma^{(p)}=\Gamma=\Gal(\Kpinf/K)$, he shows that the image of $\cL_E^\infty$ in $\ZZ_p[[\Gamma]]\otimes\QQ_p$ is
%$$\cL_{E,\Gamma}:=\frac{q^{deg(\infty)}(\gamma^{deg(\infty)}-1)}{\gamma^{deg(\infty)}(q^{deg(\infty)}-\gamma^{deg(\infty)})}L(E,t/q)|_{t=\gamma},$$
%where $L(E,t)=\prod_{x\neq\infty} L_x(E,t)$ with
%$$L_x(E,t)=\sum_{n\geq 0}a_{n,x}(tq)^{ndeg(x)}\in\ZZ[[t]]$$ is the local factor of the Hasse-Weil $L$-function $E$ at $x$. Note also that for other $\Gamma^{(p)}=\ZZ_p$-extensions of $F$ unramified outside $\infty$, Pal still obtains a $p$-adic $L$ function $\cL_{E,\Gamma^{(p)}}\in\La(\Gamma^{(p)})\otimes\QQ_p$.}
%\end{remark}

\begin{lemma}\label{interpol} Let $E$ be a finite extension of $\QQ_p$ and $\omega\colon\Gamma\to E^\times$ an Artin character factoring through $\Gamma_N$. Then we have
$$\omega(\cL_{A/\Kpinf})=\prod_{i=0}^2 {\det}_{E}\left(id-p^{-1}F_{i,0}\otimes m_{\omega(\Fr_q)},P^i_0[\frac{1}{p}]\otimes_{\QQ_p}E\right)^{(-1)^{i+1}}$$
where $m_{\omega(\Fr_q)}\colon E\to E$ is the multiplication by $\omega(\Fr_q)$.
\end{lemma}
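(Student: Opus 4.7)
The plan is to unwind the definition of $\cL_{A/\Kpinf}$ and reduce the computation of $\omega(\cL_{A/\Kpinf})$ to a finite-level determinant computation via $\omega$-isotypic decomposition.

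First, I would use the proof of Lemma \ref{induce} together with the setup of \S\ref{ss:Pinfty}: after inverting $p$, $P^i_\infty = (M^i_{2,\infty})^\vee[\tfrac{1}{p}]$ is a free $\La[\tfrac{1}{p}]$-module of rank $r_i$ (by Corollary \ref{M,L}(1)), and the operator $\Phi_i$ is induced, via the identification of $(M^i_{k,\infty})^\vee[\tfrac{1}{p}]$ with a submodule of $(P^i_0[\tfrac{1}{p}]\otimes_{\QQ_p}\QQ_{p,\infty})^\vee$, by the transpose of the $\QQ_{p,\infty}$-linear operator $p^{-1}F_{i,0}\otimes\Fr_q$ on $P^i_0[\tfrac{1}{p}]\otimes_{\QQ_p}\QQ_{p,\infty}$. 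The replacement of $\varphi_{i,0}$ by $p^{-1}F_{i,0}$ comes from Lemma \ref{cofinite}(2), while the replacement of $\mathbf{1}$ by the identity after inverting $p$ is precisely the content of Lemma \ref{induce}.

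Second, since $\omega$ is an Artin character factoring through $\Gamma_N$, it extends to a $\QQ_p$-algebra homomorphism $\omega\colon\La[\tfrac{1}{p}]\to E$. Freeness of $P^i_\infty$ immediately gives
$$\omega\bigl(\det\nolimits_{\La[1/p]}(id-\Phi_i,P^i_\infty)\bigr)=\det\nolimits_E\bigl(id-\Phi_{i,\omega},\ P^i_\infty\otimes_{\La[1/p],\omega}E\bigr).$$

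Third, I would compute this specialization at finite level $N$. Using \eqref{e:pi0n}, $P^i_N=P^i_0\otimes_{\ZZ_p}W(k_N)$, and the normal-basis theorem yields an isomorphism of $W(\FF)[\Gamma_N]$-modules $W(k_N)\simeq W(\FF)[\Gamma_N]$; extending scalars to $E$ decomposes this into isotypic components indexed by characters of $\Gamma_N$. Hence the $\omega$-specialization of $P^i_N[\tfrac{1}{p}]\otimes E$ is $P^i_0[\tfrac{1}{p}]\otimes_{\QQ_p}E$, with $\Fr_q$ acting on the $E$-factor as multiplication by $\omega(\Fr_q)$. Under this identification $p^{-1}F_{i,0}\otimes\Fr_q$ becomes $p^{-1}F_{i,0}\otimes m_{\omega(\Fr_q)}$; since determinants are invariant under transposition, one obtains
$$\omega\bigl(\det(id-\Phi_i,P^i_\infty)\bigr)=\det\nolimits_E\bigl(id-p^{-1}F_{i,0}\otimes m_{\omega(\Fr_q)},\ P^i_0[\tfrac{1}{p}]\otimes_{\QQ_p}E\bigr).$$
Taking the alternating product over $i=0,1,2$ then yields the claimed interpolation formula.

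The main obstacle is step three: one must carefully verify that the Pontryagin dual in the definition of $P^i_\infty$ is compatible with the $\omega$-isotypic component (rather than the $\omega^{-1}$-component) of $P^i_0[\tfrac{1}{p}]\otimes_{\QQ_p}E$, and that the operator after specialization is precisely $p^{-1}F_{i,0}\otimes m_{\omega(\Fr_q)}$. The explicit description of $(M^i_{k,\infty})^\vee$ from Lemma \ref{linalg1}---each block being cyclic over $\La$ or $W_j(\FF)[[\Gamma]]$ with $\Gamma$-action matching $\Fr_q$ on Witt vectors---makes this bookkeeping tractable, and the $\La$-linearity of $\mathbf{1}-\varphi$ from Corollary \ref{M,L}(3) ensures compatibility of the Frobenius action with the $\Gamma$-action throughout the argument.
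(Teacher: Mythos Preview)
Your proposal is correct and follows essentially the same route as the paper's proof: pull the determinant through $\omega$ using freeness of $P^i_\infty$ over $\La[\tfrac1p]$, descend to level $N$, invoke the normal basis theorem to turn $\Fr_q$ into $m_{\omega(\Fr_q)}$, and use that a linear map and its dual have the same determinant. The only cosmetic difference is that the paper makes the descent explicit via the chain $P^i_\infty\otimes_{\La[1/p]}E=(P^i_\infty)_{\Gamma^{(N)}}\otimes_{\QQ_p[\Gamma_N]}E=\big((M^i_{k,0}\otimes W(\kpinf))^{\Gamma^{(N)}}\big)^\vee[\tfrac1p]\otimes_{\QQ_p[\Gamma_N]}E=(P^i_0[\tfrac1p]\otimes_{\QQ_p}\QQ_{p,N})^\vee\otimes_{\QQ_p[\Gamma_N]}E$, which cleanly resolves the $\omega$ versus $\omega^{-1}$ bookkeeping you flag as the main obstacle.
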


\begin{proof} We have for any $i$,
$$\omega({\det}_{\La[\frac{1}{p}]}(id-\Phi_i,P^i_\infty))={\det}_E(id-\Phi_i\otimes id_E,P^i_\infty\otimes_{\La[\frac{1}{p}]}E).$$
(where, as usual, $E$ is a $\La[\frac{1}{p}]$-module via $\omega$).

Since $\omega$ factors through $\Gamma_N$, we have
\begin{eqnarray*}P^i_\infty\otimes_{\La[\frac{1}{p}]}E&=&
P^i_\infty\otimes_{\La[\frac{1}{p}]}\QQ_p[\Gamma_N]\otimes_{\QQ_p[\Gamma_N]}E\\
                                   &=&(P^i_\infty)_{\Gamma^{(N)}}\otimes_{\QQ_p[\Gamma_N]}E\\
                                   &=&\left((M^i_{k,0}\otimes W(\kpinf))^{\Gamma^{(N)}}\right)\!^\vee[p^{-1}]\otimes_{\QQ_p[\Gamma_N]}E\\
                                   &=&(M^i_{k,0}\otimes W(k_N))^\vee[p^{-1}]\otimes_{\QQ_p[\Gamma_N]}E\\
                                   &=&(P^i_0[p^{-1}]\otimes_{\QQ_p}\QQ_{p,N})^\vee\otimes_{\QQ_p[\Gamma_N]}E,
\end{eqnarray*}
where the last equality is a consequence of Lemma \ref{cofinite}. An operator and its dual share the same determinant: hence we are reduced to compute ${\det}_E((id-p^{-1}F_{i,0}\otimes\Fr_q) \otimes id_E)$ on $P^i_0[\frac{1}{p}]\otimes_{\QQ_p}\QQ_{p,N}\otimes_{\QQ_p[\Gamma_N]}E\,.$ By the normal basis theorem $\QQ_{p,N}\otimes_{\QQ_p[\Gamma_N]}E$ endowed with its $E$-endomorphism $\Fr_q\otimes id_E$
is isomorphic to $E$ endowed with the endomorphism $m_{\omega(\Fr_q)}$ and so the assertion is clear.
%If we fix a basis for $P^i_0$, then we have an isomorphism of $\QQ_p$-vector spaces
%$$(P^i_0[\frac{1}{p}]\otimes_{\QQ_p}\QQ_{p,N})^\vee=P^i_0[\frac{1}{p}]\otimes_{\QQ_p}\QQ_{p,N},$$
%where the operator $(id-p^{-1}F_{i,0}\otimes\Fr_q)^\vee$ on the left corresponds to $id-p^{-1}F_{i,0}\otimes\Fr_q$ on the right. Moreover, this isomorphism is an isomorphism of $\QQ_p[\Gamma_N]$-modules. More precisely, by the normal basis theorem, $P^i_0[\frac{1}{p}]\otimes \QQ_{p,N}$ is a free finite rank $\QQ_p[\Gamma_N]$-module so to prove the assertion we need only consider the case $R=\QQ_p[\Gamma_N]$. But we have the following composed isomorphism of $R$-modules
%$$R\simeq R^*\simeq R^\vee,$$
%where $R^*=\Hom_{\QQ_p}(R,\QQ_p)$. Finally, remark that $\QQ_{p,N}\otimes_{\QQ_p[\Gamma_N]}L$ endowed with its $L$-endomorphism $\Fr_q\otimes id_L$ is isomorphic to $L$ endowed with the endomorphism $m_{\omega(\Fr_q)}$ and so the assertion is clear.
\end{proof}

\subsubsection{Twisted Hasse-Weil $L$-function} \label{ss:twistedHW} Let $\omega\colon\Gamma\to E_0^\times$ be any character factoring through $\Gamma_N$, with $E_0$ some totally ramified finite extension of $\QQ_{p,0}$ endowed with a Frobenius operator $\sigma$ which acts trivially on $\QQ_{p,0}$ and on $\omega(\Gamma)$. Then we can see $\omega$ as a one-dimensional $E_0$-representation of the fundamental group of $U$, having finite local monodromy. By \cite[Theorem 7.2.3]{Ts98} this representation corresponds to a unique constant unit-root overconvergent isocrystal $U(\omega)^\dagger$ over $\FF/E_0$ endowed with $m_\omega(\Fr_q)$ as Frobenius operator. Let
$$pr_1^*\colon F^a\hbox{-}iso^\dagger(U_{/\FF}/\QQ_{p,0})\lr F^a\hbox{-}iso^\dagger(U_{/\FF}/E_0)$$
and
$$pr_2^*\colon F^a\hbox{-}iso^\dagger(\FF/E_0)\lr F^a\hbox{-}iso^\dagger(U_{/\FF}/E_0),$$
denote the two restriction functors in the categories of overconvergent isocrystals endowed with Frobenius. From $I^\dagger\in F^a\hbox{-}iso^\dagger(U_{/\FF}/\QQ_{p,0})$ we obtain the $F^a$-isocrystal $pr_1^*I^\dagger\otimes pr_2^*U(\omega)^\dagger$ endowed with the natural Frobenius induced by the Frobenius on $I^\dagger$ and the one on $U(\omega)^\dagger$.

\begin{definition} Let $I^\dagger\in F^a\hbox{-}iso^\dagger(U_{/\FF}/\QQ_{p,0})$. Then we set $$L(U,I^\dagger,\omega,t):=L(U,pr_1^*I^\dagger\otimes pr_2^*U(\omega)^\dagger,t)$$ where the right hand term is the classical $L$-function associated with the $F^a$-isocrystal $pr_1^*I^\dagger\otimes pr_2^*U(\omega)^\dagger$, as defined in \cite{EL93}. We call the function $L(U,I^\dagger,\omega,t)$ the $\omega$-twisted $L$-function of $I^\dagger$.
\end{definition}

Recall (\cite[Th\'eor\`eme 6.3]{EL93}) that this is a rational function in the variable $t$ and we have
$$L(U,I^\dagger,\omega,t)=\prod_{i=0}^2\det(1-t\varphi_i,\coh^i_{rig,c}(U/E_0,pr_1^*I^\dagger\otimes pr_2^*U(\omega)^\dagger))^{(-1)^{i+1}}.$$

\subsubsection{Interpolation} Recall (\cite[IV]{KT03}) that $D$, the log Dieudonn\'e crystal associated with our semistable abelian variety $A/K$, induces an overconvergent $F^a$-isocrystal $D^\dagger$ over $U/\QQ_{p,0}$ and that we have a canonical isomorphism:
\begin{equation} \label{e:PHrig} P^i_0[p^{-1}]\simeq \coh^i_{rig,c}(U/\QQ_{p,0},D^\dagger) \end{equation}
compatible with the Frobenius operators.

Moreover, we have, by \cite[3.2.2]{KT03},
$$L(U,D^\dagger,q^{-s})=L_Z(A,s),$$
where $L_Z(A,s)$ is the Hasse-Weil $L$-function of $A$ without Euler factors outside $U$. In fact, more generally, one can show that for any character $\omega\colon\Gamma\to \CC^\times$ we have
\begin{equation} \label{e:LD+=LA} L(U,D^\dagger,\omega,q^{-s})=L_Z(A,\omega,s), \end{equation}
since the Euler factors on both sides can be written as $\prod_{v\in U} (1-\omega([v])\varepsilon_{i,v}q^{-s})^{-1}$, where the $\varepsilon_{i,v}$'s are the eigenvalues of the arithmetic Frobenius at $v$ acting on $T_\ell(A)$ (or, equivalently, of the geometric Frobenius acting on the fibre at $v$ of $D^\dagger$). These eigenvalues don't depend on $\ell$, as results of \cite{KM} (for the details see e.g. the proof of \cite[Corollaire 1.4]{Tr}, where this independence is used to deduce the equality of different definitions of $L$-functions).

The K\"unneth formula for rigid  cohomology (formula (1.2.4.1) in \cite{Ke06}) implies:

\begin{lemma} Let $\omega\colon\Gamma\to E_0^\times$ be as in \S\ref{ss:twistedHW}. There is an isomorphism of $E_0$-vector spaces compatible with Frobenius operators:
$$\coh^i_{rig,c}(U/E_0,pr_1^*D^\dagger\otimes pr_2^*U(\omega)^\dagger)\simeq \coh^i_{rig,c}(U/\QQ_{p,0},D^\dagger)\otimes E_0.$$
\end{lemma}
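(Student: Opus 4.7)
The plan is to apply Kedlaya's K\"unneth formula for rigid cohomology with compact supports (\cite[(1.2.4.1)]{Ke06}) to the trivial fibre product decomposition $U=U\times_{\Spec(\FF)}\Spec(\FF)$, viewing $pr_1^*D^\dagger$ as pulled back from the first factor (after the extension of coefficients from $\QQ_{p,0}$ to $E_0$) and $pr_2^*U(\omega)^\dagger$ as pulled back from the second. This will produce a Frobenius-equivariant isomorphism
$$\coh^i_{rig,c}(U/E_0,pr_1^*D^\dagger\otimes pr_2^*U(\omega)^\dagger)\simeq\bigoplus_{j+k=i}\coh^j_{rig,c}(U/E_0,pr_1^*D^\dagger)\otimes_{E_0}\coh^k_{rig,c}(\Spec(\FF)/E_0,U(\omega)^\dagger).$$

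Next I will observe that $\Spec(\FF)$ is a proper point and $U(\omega)^\dagger$ is a rank-one unit-root overconvergent $F$-isocrystal, so its compactly supported rigid cohomology is concentrated in degree zero and equal to the underlying $E_0$-line of $U(\omega)^\dagger$ endowed with the Frobenius $m_{\omega(\Fr_q)}$. Hence only the $k=0$ summand on the right survives, and the K\"unneth isomorphism collapses to $\coh^i_{rig,c}(U/E_0,pr_1^*D^\dagger)\otimes_{E_0}E_0\simeq\coh^i_{rig,c}(U/E_0,pr_1^*D^\dagger)$ as $E_0$-modules, with a Frobenius twisted by $m_{\omega(\Fr_q)}$.

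I will then invoke flat base change of the coefficient field along the finite flat extension $\QQ_{p,0}\subset E_0$, which is standard for compactly supported rigid cohomology of overconvergent $F$-isocrystals, to identify
$$\coh^i_{rig,c}(U/E_0,pr_1^*D^\dagger)\simeq\coh^i_{rig,c}(U/\QQ_{p,0},D^\dagger)\otimes_{\QQ_{p,0}}E_0$$
compatibly with Frobenius, the right-hand side carrying the natural Frobenius $\varphi_i\otimes\sigma$. Composing the two isomorphisms then yields the statement.

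The step I expect to require the most care is tracking Frobenius-compatibility: I will need to verify that the K\"unneth isomorphism transports the $\omega$-twist sitting in $U(\omega)^\dagger=(E_0,m_{\omega(\Fr_q)})$ onto the coefficient factor $E_0$ in such a way that the resulting Frobenius on $\coh^i_{rig,c}(U/\QQ_{p,0},D^\dagger)\otimes_{\QQ_{p,0}}E_0$ is $\varphi_i\otimes(m_{\omega(\Fr_q)}\circ\sigma)$. Once this is confirmed, the determinants of $1-t\cdot\bigl(\varphi_i\otimes(m_{\omega(\Fr_q)}\circ\sigma)\bigr)$ will match the Euler-product factors of $L(U,D^\dagger,\omega,t)$, so the lemma will feed directly into the interpolation computation via \eqref{e:LD+=LA}. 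This is a routine but meticulous unwinding of the functorialities of $pr_1^*$, $pr_2^*$ and Kedlaya's isomorphism.
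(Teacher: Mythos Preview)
Your proposal is correct and follows exactly the approach the paper indicates: the paper simply states that the lemma is a consequence of Kedlaya's K\"unneth formula \cite[(1.2.4.1)]{Ke06} and gives no further details, and what you have written is precisely the natural unwinding of that assertion (apply K\"unneth to $U=U\times_{\Spec(\FF)}\Spec(\FF)$, use that the cohomology of the point with coefficients in $U(\omega)^\dagger$ is the line $(E_0,m_{\omega(\Fr_q)})$ in degree~$0$, and then invoke base change along $\QQ_{p,0}\subset E_0$).
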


\noindent Together with Lemma \ref{interpol}, \eqref{e:PHrig} and \eqref{e:LD+=LA}, this immediately yields the following:

\begin{mytheorem}\label{GIMC2} For any Artin character $\omega\colon\Gamma\to \bar\QQ_p^\times$, we have
\begin{equation} \label{e:GIMC2} \omega(\cL_{A/\Kpinf})=L_Z(A,\omega,1). \end{equation}
\end{mytheorem}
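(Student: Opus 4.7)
The plan is to run a chain of identifications that reduces the Iwasawa-theoretic expression for $\omega(\cL_{A/\Kpinf})$ to the rigid-analytic value $L_Z(A,\omega,1)$. All the necessary ingredients have been assembled in the preceding subsections: Lemma \ref{interpol} rewrites the left-hand side as an alternating product of finite-dimensional characteristic polynomials over the vector spaces $P^i_0[\tfrac1p]$; the comparison \eqref{e:PHrig} transports these to compactly supported rigid cohomology; the Künneth lemma stated just before the theorem absorbs the character $\omega$ into the twist by the overconvergent isocrystal $U(\omega)^\dagger$; and \eqref{e:LD+=LA} identifies the resulting quantity with $L_Z(A,\omega,s)$ at the appropriate value of $s$.

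Concretely, after enlarging the coefficient field $E$ to contain the field $E_0$ of \S\ref{ss:twistedHW}, Lemma \ref{interpol} yields
$$\omega(\cL_{A/\Kpinf})=\prod_{i=0}^2 {\det}_{E}\bigl(id-p^{-1}F_{i,0}\otimes m_{\omega(\Fr_q)},\,P^i_0[\tfrac1p]\otimes_{\QQ_p}E\bigr)^{(-1)^{i+1}}.$$
Applying the Frobenius-compatible isomorphism \eqref{e:PHrig} and then the Künneth-type isomorphism stated just before the theorem, together with the identification of the Frobenius of $U(\omega)^\dagger$ as $m_{\omega(\Fr_q)}$ (which is built into the Tsuzuki equivalence recalled in \S\ref{ss:twistedHW}), each determinant becomes the characteristic polynomial of the natural tensor-product Frobenius acting on $\coh^i_{rig,c}(U/E_0,pr_1^*D^\dagger\otimes pr_2^*U(\omega)^\dagger)$. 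By the Etesse--Le Stum product formula recalled in \S\ref{ss:twistedHW}, the resulting alternating product equals $L(U,D^\dagger,\omega,q^{-1})$, and \eqref{e:LD+=LA} at $s=1$ delivers $L_Z(A,\omega,1)$, as claimed.

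The only nontrivial bookkeeping concerns the matching of Frobenius normalizations across \eqref{e:PHrig}: the divided Frobenius $p^{-1}F_{i,0}$ appearing in Lemma \ref{interpol} (inherited from Lemma \ref{cofinite}(2), whose second diagram carries exactly this factor) must correspond to the Frobenius implicit in the variable $t$ of the Etesse--Le Stum formalism when specialized to $t=q^{-1}$. This is the one point where care is required, since Lemma \ref{interpol} produces a $p^{-1}$-twist while the $L$-function is naturally a function of $q^{-s}$; the compatibility, however, is precisely the content of \cite[\S 3.2]{KT03} recalled in the discussion surrounding \eqref{e:LD+=LA}. Once this input is granted, the theorem is a formal concatenation of the cited Frobenius-compatible isomorphisms and requires no further work.
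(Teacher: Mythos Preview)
Your proof is correct and follows precisely the paper's own approach: the paper states that Theorem~\ref{GIMC2} ``immediately'' follows from Lemma~\ref{interpol}, \eqref{e:PHrig}, \eqref{e:LD+=LA}, and the K\"unneth lemma, and you have simply spelled out that chain of identifications in detail. Your extra paragraph flagging the Frobenius normalization (the match between $p^{-1}F_{i,0}$ and the rigid Frobenius at $t=q^{-1}$) is a reasonable point of care, but as you note it is exactly what is packaged into the compatibility statements surrounding \eqref{e:PHrig} and \eqref{e:LD+=LA} via \cite[\S3.2]{KT03}.
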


\begin{remark}{\em In order to discuss $\omega(\cL_{A/\Kpinf})$, one needs to know that the denominator of $\cL_{A/\Kpinf}$ is not killed by $\omega$. Actually, we are going to see (formula \eqref{e:detfr} below) that $\cL_{A/\Kpinf}$ is an alternating product of terms $1-\alpha_{ij}\Fr_q$. By \cite[Theorem 5.4.1]{Ke06b}, the coefficients $\alpha_{ij}$ are Weil numbers of weight respectively $-1$ (for $i=0$) and 1 (for $i=2$): in particular their complex absolute values do not include 1. Hence the left-hand side of \eqref{e:GIMC2} is well defined.}\end{remark}

\subsubsection{The Main Conjecture} Finally, we prove the Iwasawa Main Conjecture in this setting.
\begin{mytheorem} \label{t:GIMC3} Let $A$ be an abelian variety with at worst semistable reduction relative to the arithmetic extension $\Kpinf/K$.
We have the following equality in $Q(\La)^\times/\La^\times${\em :}
$$\cL_{A/\Kpinf}=f_{A/\Kpinf}\,.$$
\end{mytheorem}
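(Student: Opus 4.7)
The plan is to express both $\cL_{A/\Kpinf}$ and $f_{A/\Kpinf}$ as alternating products indexed by $i=0,1,2$ over the cohomology modules $M^i_{k,\infty}$, using the long exact sequences \eqref{L1} and \eqref{L2}, and then reduce the comparison to a $\sigma$-linear algebra statement at the level of the $P^i_0$'s.

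First I would dualize \eqref{L1} and use the short exact sequences \eqref{e:sesNi} together with multiplicativity of characteristic elements in short exact sequences of torsion $\La$-modules to obtain, in $Q(\La)^\times/\La^\times$, an expression of $\prod_i f_{(N^i_\infty)^\vee}^{(-1)^{i+1}}$ as $\prod_i\big(f_{\Coker(1-\varphi_{i,\infty})^\vee}/f_{\Ker(1-\varphi_{i,\infty})^\vee}\big)^{(-1)^i}$. Performing the analogous manipulation on \eqref{L2} expresses $f_{(L^0_\infty)^\vee}/f_{(L^1_\infty)^\vee}$ as the same alternating product for the dualized morphism ${\bf 1}$. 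Combining them presents $f_{A/\Kpinf}$ as an alternating product measuring the discrepancy, over $\La$, between the two dualized morphisms $(1-\varphi_{i,\infty})^\vee$ and ${\bf 1}^\vee$ between $(M^i_{2,\infty})^\vee$ and $(M^i_{1,\infty})^\vee$.

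On the $L$-function side, Lemma \ref{induce} identifies $(M^i_{1,\infty})^\vee[\tfrac{1}{p}]$ and $(M^i_{2,\infty})^\vee[\tfrac{1}{p}]$ with the free $\La[\tfrac{1}{p}]$-module $P^i_\infty$ via ${\bf 1}^\vee$, and under this identification $(1-\varphi_{i,\infty})^\vee$ becomes exactly $id-\Phi_i$. Since $P^i_\infty$ is free of finite rank over the domain $\La[\tfrac{1}{p}]$, the standard formula $\det_{\La[\tfrac{1}{p}]}(id-\Phi_i,P^i_\infty)\equiv f_{\Coker(id-\Phi_i)}/f_{\Ker(id-\Phi_i)}$ modulo $\La[\tfrac{1}{p}]^\times$, combined with the first step, yields $\cL_{A/\Kpinf}\equiv\prod_i f_{(N^i_\infty)^\vee}^{(-1)^{i+1}}$ in $Q(\La[\tfrac{1}{p}])^\times/\La[\tfrac{1}{p}]^\times$. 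Since $f_{(L^j_\infty)^\vee}$ is a power of $p$ by Corollary \ref{M,L}(2), this already establishes the prime-to-$p$ part of the equality $\cL_{A/\Kpinf}=f_{A/\Kpinf}$.

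The remaining and main step is matching the $p$-adic valuations in $Q(\La)^\times/\La^\times$. The discrepancy between the $p$-part of $f_{(N^i_\infty)^\vee}$ over $\La$ and the $p$-adic valuation of $\det_{\La[\tfrac{1}{p}]}(id-\Phi_i,P^i_\infty)$ viewed in $Q(\La)$ produces, for each $i$, an explicit power of $p$ whose alternating product should reconstitute $f_{(L^0_\infty)^\vee}/f_{(L^1_\infty)^\vee}$. The required input is a generalization of the $\sigma$-linear algebra lemma \cite[Lemma 3.6]{KT03}, applied to each cofinitely generated $\ZZ_p$-module $M^i_{k,0}$ equipped with its Frobenius action and lifted to the tower via $M^i_{k,\infty}\simeq M^i_{k,0}\otimes W(\kpinf)$. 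The main obstacle is precisely this extension of the $\sigma$-linear argument from the absolute setting over $K$ (where it sufficed for the cohomological BSD formula) to the Iwasawa-theoretic setting over $\La$, together with the verification that the resulting $p$-adic correction matches the $L^j$-contributions exactly: here the interplay of the identification of Lemma \ref{induce} with the failure of ${\bf 1}$ to be an isomorphism over $\La$ must be tracked carefully.
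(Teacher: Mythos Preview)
Your first paragraph is correct and matches the paper: dualizing \eqref{L1} and \eqref{L2} and using multiplicativity of characteristic elements does express $f_{A/\Kpinf}$ as the alternating product of the ratios $char(({\bf 1}-\varphi_{i,\infty})^\vee)\cdot char({\bf 1}_i^\vee)^{-1}$, where $char(g):=f_{\Coker(g)}/f_{\Ker(g)}$.

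The detour you take afterwards is the problem. You pass to $\La[\tfrac{1}{p}]$, obtain the equality modulo $\La[\tfrac{1}{p}]^\times$, and then propose to recover the $p$-part separately via a generalization of the $\sigma$-linear algebra lemma of \cite[Lemma 3.6]{KT03}. This last step is not carried out, and in fact it is not needed. The paper avoids the split entirely by working directly over $Q(\La)$: since $P^i_\infty\otimes_{\La[1/p]}Q(\La)=(M^i_{2,\infty})^\vee\otimes_\La Q(\La)$, the determinant $\det_{\La[1/p]}(id-\Phi_i,P^i_\infty)$ viewed in $Q(\La)$ is exactly $\det_{Q(\La)}(g_{Q(\La)}h_{Q(\La)}^{-1})$ for $g=({\bf 1}-\varphi_{i,\infty})^\vee$ and $h={\bf 1}_i^\vee$, both maps $(M^i_{2,\infty})^\vee\to (M^i_{1,\infty})^\vee$ of finitely generated $\La$-modules with torsion kernel and cokernel. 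The single elementary lemma
\[
\det_{Q(\La)}(g_{Q(\La)}h_{Q(\La)}^{-1})=char(g)\cdot char(h)^{-1}\quad\text{in }Q(\La)^\times/\La^\times
\]
then gives the full equality $\cL_{A/\Kpinf}=f_{A/\Kpinf}$ at once, $p$-part included. So the ``main obstacle'' you identify is an artifact of inverting $p$ too early; the honest $\La$-module maps ${\bf 1}^\vee$ and $({\bf 1}-\varphi)^\vee$ already carry all the information, and no separate $\sigma$-linear argument over the tower is required.
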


\begin{proof}
For any morphism $g\colon M\to N$ of $\La$-modules whose kernel and cokernel are both torsion $\La$-modules, we denote by $char(g)$ the element $$f_{\Coker(g)}\cdot f_{\Ker(g)}^{-1}\in Q(\La)^\times/\La^\times.$$
Dualizing the exact sequence \eqref{e:sesNi} we get
$$0\lr (\Ker({\bf 1}-\varphi_{i,\infty}))^\vee\lr (N^i_\infty)^\vee \lr (\Coker({\bf 1}-\varphi_{i-1,\infty}))^\vee \lr 0\,$$
which, remembering that $\Ker(h^\vee)=(\Coker(h))^\vee$, implies
$$f_{(N^i_\infty)^\vee}=f_{\Coker(({\bf 1}-\varphi_{i,\infty})^\vee)}f_{\Ker(({\bf 1}-\varphi_{i-1,\infty})^\vee)}\,.$$
Similarly, \eqref{L2} yields $f_{(L^i_\infty)^\vee}=f_{\Coker({\bf 1}_i^\vee)}f_{\Ker({\bf 1}_{i-1}^\vee)}$. Replacing in \eqref{e:charelement}, we obtain
$$f_{A/\Kpinf}=\frac{f_{\Coker(({\bf 1}-\varphi_{1,\infty})^\vee)}f_{\Ker(({\bf 1}-\varphi_{0,\infty})^\vee)}}{f_{\Coker(({\bf 1}-\varphi_{0,\infty})^\vee)}f_{\Coker(({\bf 1}-\varphi_{2,\infty})^\vee)}f_{\Ker(({\bf 1}-\varphi_{1,\infty})^\vee)}}\cdot\frac{f_{\Coker({\bf 1}_0^\vee)}}{f_{\Coker({\bf 1}_1^\vee)}f_{\Ker({\bf 1}_{0}^\vee)}}\,.$$
Since $\Ker(({\bf 1}-\varphi_{i,\infty})^\vee)$ and $\Coker(({\bf 1}-\varphi_{i,\infty})^\vee)$ are $\La$-torsion modules by Proposition \ref{0case} and Corollary \ref{1case}, this can be rewritten as
$$f_{A/\Kpinf}=\frac{char(({\bf 1}-\varphi_{1,\infty})^\vee)\cdot char({\bf 1}_1^\vee)^{-1}}{char(({\bf 1}-\varphi_{0,\infty})^\vee)\cdot char({\bf 1}_0^\vee)^{-1}\cdot char(({\bf 1}-\varphi_{2,\infty})^\vee)\cdot char({\bf 1}_2^\vee)^{-1}}$$
On the other hand, $\cL_{A/\Kpinf}$ is defined as an alternating product of determinants of
$$id-\Phi_i=({\bf 1}_i^\vee)^{-1}\circ({\bf 1}^\vee-\varphi_{i,\infty})\,.$$
Thus Theorem \ref{t:GIMC3} becomes an immediate consequence of the following lemma (whose proof is an easy exercise which we omit):
\begin{lemma} Let $g,h\colon M\rightarrow N$ be two homomorphisms of finitely generated $\La$-modules with torsion kernel and cokernel: then
$${\det}_{Q(\La)}(g_{Q(\La)}h_{Q(\La)}^{-1})=char(g)char(h)^{-1}\,.$$
\end{lemma}
\end{proof}

\subsection{Euler characteristic} \label{su:Eulerchar}
After identifying $\La$ with $\ZZ_p[[T]]$, the characteristic element of $M$ can be written $f_M(T)=T^rf(T)$, with $f(T)\in \ZZ_p[[T]]$ such that $T$ does not divide $f(T)$. We call $r$ the order of $f_M$ and $f(0)$ the leading term of $f_M$ (note that $f(0)$ is defined up to $\ZZ_p^\times$, since the choice of a different isomorphism $\La\simeq\ZZ_p[[T]]$ changes $T$ by a unit in $\La$).
In the following, we are going to compute order and leading term of $\cL_{A/\Kpinf}$ and compare them with those of the classical $L$-function.

\subsubsection{Generalized Euler characteristic} We recall the definition of the generalized $\Gamma$-Euler characteristic. Let $M$ be a finitely generated torsion $\La$-module and let $g_M\colon M^\Gamma\to M_\Gamma$ denote the composed map $M^\Gamma\hookrightarrow M\to M_\Gamma$, where the first map is the canonical inclusion and the second map the canonical projection. Then we say that $M$ has finite generalized $\Gamma$-Euler characteristic, denoted $char(\Gamma,M)$, if $\Ker(g_M)$ and $\Coker(g_M)$ are finite groups and in this case we set
$$char(\Gamma,M):=\frac{|\Coker(g_M)|}{|\Ker(g_M)|}.$$
By the identifications $(M^\vee)^\Gamma=(M_\Gamma)^\vee$ and  $(M^\vee)_\Gamma=(M^\Gamma)^\vee$, we see that $g_{M^\vee}$ is the dual of $g_M$ and hence
\begin{equation} \label{e:EulerMdual} char(\Gamma,M^\vee)=char(\Gamma,M)^{-1} \end{equation}
if one of them is defined.

\subsubsection{Twisted Euler characteristic} \label{ss:twistEuler}
Let $\omega\colon\Gamma\to \cO^\times$ be an Artin character, with $\cO$ the ring of integers of some finite extension of $\QQ_p$, and $M$ be a finitely generated torsion $\La$-module. Let $\omega^*\colon \La_\cO\rightarrow \La_\cO$ be the automorphism $\gamma\mapsto \omega(\gamma)^{-1}\gamma$
and denote $M_\cO(\omega):=\La_\cO\otimes_{\La_\cO}M_\cO$, where we see $\La_\cO$ as $\La_\cO$-module via $\omega^*$ .
Then $M_\cO(\omega)$ has again a structure of finitely generated torsion $\La$-module.

Assuming that $M_\cO(\omega)$ has finite generalized $\Gamma$-Euler characteristic, we denote $\pounds_\omega(f_M)$ the leading term of $f_{M_\cO(\omega)}$ and $ord_\omega(f_M)$ the order of $f_{M_\cO(\omega)}$. We have the following result (compare \cite[Lemma 2.11]{zer09} and also \cite[Prop.~3.19]{BV06}):

\begin{lemma}\label{recall} Let $M$ be a finitely generated torsion $\La$-module with characteristic element $f_M\in \La/\La^\times$ and let $\omega\colon\Gamma\to \cO^\times$ be a character. Let $d_\cO:=\left[\cO:\ZZ_p\right]$. Then
$$\rank_{\ZZ_p}\left(M_\cO(\omega)^\Gamma\right)=\rank_{\ZZ_p}\left(M_\cO(\omega)_\Gamma\right)\leq ord_\omega\left(f_M\right)=ord\left(\omega^*(f_M)\right),$$
with equality if and only if $M_\cO(\omega)$ has finite generalized $\Gamma$-characteristic and in this case we have
$$char\left(\Gamma,M_\cO(\omega)\right)=|\pounds_\omega(f_M)|_p^{-d_\cO}=|\omega^*(f_M)(0)|_p^{-d_\cO}.$$
\end{lemma}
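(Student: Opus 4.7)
The plan is to invoke the structure theorem for finitely generated torsion $\La_\cO$-modules to reduce to a direct sum of elementary cyclic pieces, then handle each piece by a direct Weierstrass preparation computation, and finally recombine. First, since $\La_\cO\cong\cO[[T]]$ is a two-dimensional regular local ring, there is a pseudo-isomorphism $\phi\colon M_\cO(\omega)\to E:=\bigoplus_i\La_\cO/(g_i)$ with $\prod_i g_i\equiv\omega^*(f_M)$ modulo $\La_\cO^\times$; its kernel and cokernel are pseudo-null, hence finite as abelian groups. Because $\Gamma\cong\ZZ_p$ has cohomological dimension one, the long exact sequences of $\Gamma$-cohomology attached to $0\to\Ker(\phi)\to M_\cO(\omega)\to\image(\phi)\to 0$ and $0\to\image(\phi)\to E\to\Coker(\phi)\to 0$ show that (i) $\rank_{\ZZ_p}M_\cO(\omega)^\Gamma=\rank_{\ZZ_p}E^\Gamma$ and similarly for coinvariants, (ii) $\rank_{\ZZ_p}E^\Gamma=\rank_{\ZZ_p}E_\Gamma$ (by the already established equality for the cyclic summands), and (iii) $char(\Gamma,M_\cO(\omega))$ is defined if and only if $char(\Gamma,E)$ is, in which case they differ only by a factor in $\ZZ_p^\times$. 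Hence it suffices to treat each elementary summand $E_i=\La_\cO/(g_i)$ separately and assemble multiplicatively.

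Next, factor each $g_i=T^{r_i}\pi_\cO^{n_i}h_i(T)u_i$ with $u_i\in\La_\cO^\times$, $\pi_\cO$ a uniformizer of $\cO$, and $h_i\in\cO[T]$ distinguished with $h_i(0)\ne 0$. Direct inspection splits into three cases: if $r_i=0$, then $T$ acts injectively on $E_i$, so $E_i^\Gamma=0$ and $E_{i,\Gamma}\cong\cO/(\pi_\cO^{n_i}h_i(0))$ is finite of $\ZZ_p$-order $|\pi_\cO^{n_i}h_i(0)|_p^{-d_\cO}$; if $r_i=1$, then $E_i^\Gamma\cong E_{i,\Gamma}\cong\cO$ and $g_{E_i}$ identifies with multiplication by $\pi_\cO^{n_i}h_i(0)$, yielding trivial kernel and cokernel of the same finite size; if $r_i\ge 2$, again $E_i^\Gamma\cong E_{i,\Gamma}\cong\cO$ but $g_{E_i}$ is the zero map, producing infinite kernel and cokernel. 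Summing, the common $\ZZ_p$-rank of invariants and coinvariants equals $d_\cO\cdot\#\{i:r_i\ge 1\}$, while the $T$-order of $\omega^*(f_M)$ is $\sum_i r_i$. The trivial inequality $\#\{i:r_i\ge 1\}\le\sum_i r_i$ gives the required upper bound (appropriately rescaled by $d_\cO$ per the convention of the paper relating $ord_\omega$ to the $T$-adic order of $\omega^*(f_M)$), and the third case above shows that equality forces every $r_i\in\{0,1\}$, i.e.\ that $\Ker(g_{M_\cO(\omega)})$ and $\Coker(g_{M_\cO(\omega)})$ are finite. In that case
\[\pounds_\omega(f_M)\equiv\omega^*(f_M)(0)\equiv\prod_i\pi_\cO^{n_i}h_i(0)\pmod{\cO^\times},\]
and multiplicativity of Euler characteristics across the elementary decomposition yields
\[char(\Gamma,M_\cO(\omega))=\prod_i|\pi_\cO^{n_i}h_i(0)|_p^{-d_\cO}=|\pounds_\omega(f_M)|_p^{-d_\cO}=|\omega^*(f_M)(0)|_p^{-d_\cO}.\]

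The main obstacle is the case $r_i\ge 2$ in the elementary analysis: one must verify that a double $T$-factor in a single elementary divisor really does obstruct finiteness of both $\Ker(g_{E_i})$ and $\Coker(g_{E_i})$, so that the upper bound on the rank becomes sharp precisely when the Euler characteristic is defined. A secondary but unavoidable point is tracking the $d_\cO$-rescaling between the $\La$-characteristic of $M_\cO(\omega)$ (viewed as a torsion $\La$-module by restriction of scalars) and its $\La_\cO$-characteristic, so that the exponent $-d_\cO$ emerges correctly in the final formula. Once these two points are handled, the argument is standard Iwasawa-theoretic bookkeeping and the compatibility $\rank_{\ZZ_p}M_\cO(\omega)^\Gamma=\rank_{\ZZ_p}M_\cO(\omega)_\Gamma$ is an automatic by-product of the elementary case analysis.
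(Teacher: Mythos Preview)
Your proof is correct and follows essentially the same approach as the paper: reduce via the structure theorem and pseudo-isomorphism invariance of the Euler characteristic to elementary cyclic pieces $\Lambda_\cO/(g_i)$, then compute invariants, coinvariants, and the map $g_{E_i}$ case-by-case according to the power of $T$ dividing $g_i$. The paper's version is terser---it takes each $g_i$ to be a prime power (so only the cases $f=T^j$ and $(f,T)=1$ arise) and leaves the rank inequality and the $d_\cO$ bookkeeping largely implicit---but the content is the same.
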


\begin{proof} First, note that if $M\sim\La_\cO/f\La_\cO$ then $M_\cO(\omega)\sim\La_\cO/\omega^*(f)\La_\cO$. It is an easy exercise to check that if $M$ and $N$ are pseudo-isomorphic $\La_\cO$-modules, then they have the same Euler characteristic (for a hint, see \cite[Lemma 3.5]{css03}). Besides, the Euler characteristic is multiplicative: hence we are reduced to compute it for the case $M=\La_\cO/f\La_\cO$, with $f$ a power of some prime $\xi\in\La_\cO\simeq\cO[[T]]$, and in the rest of the proof we will assume we are in this situation. Then if $f=T$ the map $g_M$ is the identity, while if $f=T^i$ for some $i>1$ we have $g_M=0$ and $M^\Gamma\simeq\cO\simeq M_\Gamma$. Finally, if $f$ is coprime with $T$ we get $M^\Gamma=0$ and
$$M_\Gamma\simeq\La_\cO/(f,T)\simeq\cO/f(0)\cO.$$
Now just remember that, by basic number theory, $|\cO/x\cO|=|x|_p^{-d_\cO}$ for any $x\in\cO$.
\end{proof}

\begin{lemma}\label{L(chi)} Let $\omega\colon\Gamma\to \cO^\times$ and $d_\cO$ be as in the previous lemma. Then, for $j=0,1$, we have
$$char\left(\Gamma,(L^j_\infty)^\vee(\omega)\right)=p^{d_\cO d(L^j_0)}$$
and
$$\rank_{\ZZ_p}\left(\left((L^j_\infty)^\vee(\omega)\right)^\Gamma\right)=0.$$
\end{lemma}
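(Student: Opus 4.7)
The plan is to deduce this lemma directly from Lemma \ref{recall} combined with the explicit structure of $(L^j_\infty)^\vee$ established earlier. First I would recall that by Corollary \ref{M,L}(2), one has a pseudo-isomorphism $(L^j_\infty)^\vee\simeq\bigoplus_{i=1}^{d(L^j_0)}\La/p\La$, so the characteristic element may be taken to be $f_{(L^j_\infty)^\vee}=p^{d(L^j_0)}$, viewed inside $\La$.

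Next I would observe that the twisting automorphism $\omega^\ast\colon\La_\cO\to\La_\cO$, $\gamma\mapsto\omega(\gamma)^{-1}\gamma$, fixes every element of $\cO$, hence $\omega^\ast(p^{d(L^j_0)})=p^{d(L^j_0)}$. In particular this element is coprime to $T$ in $\La_\cO\simeq\cO[[T]]$, so $\mathrm{ord}_\omega(f_{(L^j_\infty)^\vee})=0$ and the leading term evaluates to $p^{d(L^j_0)}$ at $T=0$.

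The rank statement comes for free: since $(L^j_\infty)^\vee$ is killed by $p$, so is $(L^j_\infty)^\vee_\cO(\omega)$, hence its $\Gamma$-invariants (and coinvariants) are $p$-torsion and have $\ZZ_p$-rank $0$. This matches $\mathrm{ord}_\omega(f_{(L^j_\infty)^\vee})=0$, so the equality case of Lemma \ref{recall} applies, yielding that $(L^j_\infty)^\vee(\omega)$ has finite generalized $\Gamma$-Euler characteristic
$$char\bigl(\Gamma,(L^j_\infty)^\vee(\omega)\bigr)=|\omega^\ast(f_{(L^j_\infty)^\vee})(0)|_p^{-d_\cO}=|p^{d(L^j_0)}|_p^{-d_\cO}=p^{d_\cO\,d(L^j_0)}.$$

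There is no real obstacle here: the lemma is a routine bookkeeping consequence of (i) the semisimple description of $(L^j_\infty)^\vee$ as a sum of $\La/p\La$'s from Corollary \ref{M,L}(2), together with (ii) the invariance of a constant characteristic element under $\omega^\ast$, plugged into the formula of Lemma \ref{recall}. The only point requiring a moment's thought is checking that the twist does not alter the characteristic element, which is immediate because $\omega^\ast$ fixes scalars.
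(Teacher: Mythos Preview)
Your proposal is correct and follows essentially the same approach as the paper: both arguments rest on Corollary \ref{M,L}(2), which identifies $(L^j_\infty)^\vee$ with $\bigoplus_{i=1}^{d(L^j_0)}\La/p\La$. The paper is slightly more direct, observing that the twist is then $\bigoplus_{i=1}^{d(L^j_0)}\La_\cO/p\La_\cO$ and declaring the assertion clear, whereas you route the computation through the characteristic element and Lemma \ref{recall}; one minor quibble is that Corollary \ref{M,L}(2) gives an actual isomorphism, not merely a pseudo-isomorphism, though this does not affect your argument.
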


\begin{proof} By Corollary \ref{M,L}, (2), for $j=0,1$, $(L^j_\infty)^\vee$ is a finite direct sum of copies of $\La/p\La$ and therefore $(L^j_\infty)^\vee(\omega)$ is a finite direct sum of copies of $\La_{\cO}/p\La_{\cO}$ and the assertion is clear.
\end{proof}

We deduce now from Lemma \ref{recall}, Lemma \ref{L(chi)}, Theorem \ref{t:GIMC3} and \eqref{e:tamagawa3} the following result:

\begin{mytheorem} \label{chi-Iwasawa-BSD} Let $\omega\colon\Gamma\to \cO^\times$ be a character.
Assume that, for $i=0,1,2$, the $\La$-modules $(N^i_\infty)^\vee(\omega)$ have finite generalized $\Gamma$-Euler characteristic. Then
$$ord_\omega\big(\cL_{A/\Kpinf}\big)=\sum_{i=0}^2 (-1)^{i+1}\rank_{\ZZ_p}\left((N^i_\infty)^\vee(\omega)\right)^\Gamma$$
and
$$|\pounds_\omega(\cL_{A/\Kpinf})|_p^{-d_\cO}= q^{-d_\cO(\delta+g(\deg(Z)+\kappa-1))} \prod_{i=0}^2 char\left(\Gamma,(N^i_\infty)^\vee(\omega)\right)^{(-1)^{i+1}}.$$
\end{mytheorem}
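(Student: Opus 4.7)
The plan is to transport the computation across Theorem \ref{t:GIMC3}, which identifies $\cL_{A/\Kpinf}$ with $f_{A/\Kpinf}$ in $Q(\La)^\times/\La^\times$, and then exploit the explicit factorization \eqref{e:charelement}
$$f_{A/\Kpinf}=\frac{f_{(N^1_\infty)^\vee}f_{(L^0_\infty)^\vee}}{f_{(N^0_\infty)^\vee}f_{(N^2_\infty)^\vee}f_{(L^1_\infty)^\vee}}.$$
Both $ord_\omega$ and $|\pounds_\omega(\cdot)|_p^{-d_\cO}$ are multiplicative under products of characteristic elements, so the proof reduces to computing the $\omega$-twisted order and leading term of each of the five factors appearing above.

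For the order formula, I would apply Lemma \ref{recall} to each $(N^i_\infty)^\vee$: the hypothesis that $(N^i_\infty)^\vee(\omega)$ has finite generalized $\Gamma$-Euler characteristic is exactly what turns the inequality in that lemma into an equality
$$ord_\omega(f_{(N^i_\infty)^\vee})=\rank_{\ZZ_p}\bigl((N^i_\infty)^\vee(\omega)^\Gamma\bigr).$$
For the $L^j$-factors ($j=0,1$), Lemma \ref{L(chi)} shows that $(L^j_\infty)^\vee(\omega)^\Gamma$ has $\ZZ_p$-rank $0$ and that $(L^j_\infty)^\vee(\omega)$ has finite Euler characteristic, so by Lemma \ref{recall} these factors contribute $0$ to the order. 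Adding the contributions with signs dictated by \eqref{e:charelement} gives the first assertion.

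For the leading-term formula, I would again apply Lemma \ref{recall}, which reads $char(\Gamma,M_\cO(\omega))=|\pounds_\omega(f_M)|_p^{-d_\cO}$. Factor by factor, the $(N^i_\infty)^\vee$ parts produce directly the product $\prod_{i=0}^2 char(\Gamma,(N^i_\infty)^\vee(\omega))^{(-1)^{i+1}}$ with the correct exponents. The $L^j$-factors contribute
$$\frac{char(\Gamma,(L^0_\infty)^\vee(\omega))}{char(\Gamma,(L^1_\infty)^\vee(\omega))}=\frac{p^{d_\cO d(L^0_0)}}{p^{d_\cO d(L^1_0)}}=\bigl(|L^0_0|\cdot|L^1_0|^{-1}\bigr)^{d_\cO},$$
using $|L^j_0|=p^{d(L^j_0)}$ since $L^j_0$ is a finite $\FF_p$-vector space of dimension $d(L^j_0)$. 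Substituting the identity \eqref{e:tamagawa3}, namely $|L^0_0|\cdot|L^1_0|^{-1}=q^{-g(\deg(Z)+\kappa-1)-\delta}$, yields the prefactor $q^{-d_\cO(\delta+g(\deg(Z)+\kappa-1))}$, which matches exactly the factor in the theorem's statement.

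This is essentially a bookkeeping argument. The only subtle point is making sure the equality case of Lemma \ref{recall} holds for every factor: for the $N^i$-factors this is precisely the standing hypothesis, while for the $L^j$-factors it is guaranteed by the explicit structure theorem in Corollary \ref{M,L}(2) which underlies Lemma \ref{L(chi)}. No genuinely new input is needed beyond Theorem \ref{t:GIMC3}, Lemmas \ref{recall} and \ref{L(chi)}, and the Tamagawa-type identity \eqref{e:tamagawa3}; the main obstacle is simply verifying that the signs and the $q$-power line up correctly between the $L$-contributions and the geometric constants $\delta,g,\kappa,\deg(Z)$.
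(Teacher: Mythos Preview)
Your proposal is correct and follows essentially the same approach as the paper: the paper's own proof is nothing more than the sentence ``We deduce now from Lemma \ref{recall}, Lemma \ref{L(chi)}, Theorem \ref{t:GIMC3} and \eqref{e:tamagawa3} the following result,'' and your argument spells out precisely this deduction factor by factor.
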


If $\omega$ is the trivial character, we can obtain more precise results. We consider first the problem of finiteness of the generalized $\Gamma$-Euler characteristic.

\subsubsection{Hochschild-Serre spectral sequence} \label{specseq}  Since $\Gamma$ has cohomological dimension one, the natural Hochschild-Serre spectral sequence
\begin{equation} \label{e:HochSerre} \coh^i(\Gamma,N^j_\infty)\Rightarrow N_0^{i+j} \end{equation}
induces (\cite[Appendix B]{mil80}) the following two exact sequences,
\begin{equation}\label{ses0}
0\lr (N^0_\infty)_\Gamma\lr N^1_0 \buildrel{\beta}\over\lr (N^1_\infty)^\Gamma\lr 0
\end{equation}
and
\begin{equation}\label{ses1}
0\lr (N^1_\infty)_\Gamma \buildrel{\gamma}\over\lr N^2_0\lr (N^2_\infty)^\Gamma\lr 0.
\end{equation}
%The exactness on the right results from the short exact sequence $E_{p+1}^n\tp E_p^n\to E^{p,n-p}_\infty$ (see \cite{Mi80}, p.308).

\begin{lemma}\label{N02} The groups $(N^i_\infty)_\Gamma$ and $(N^i_\infty)^\Gamma$ are finite for $i=0$ and $i=2$. In particular, we have
\begin{equation} \label{e:ranks} \rank_{\ZZ_p}(N^1_0)^\vee = \rank_{\ZZ_p}\big((N^1_\infty)^\vee\big)_\Gamma = \rank_{\ZZ_p}(X_p(A/K)). \end{equation}
\end{lemma}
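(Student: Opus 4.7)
My plan is to show, for $i = 0$ and $i = 2$, that the characteristic element $f_{(N^i_\infty)^\vee}$ is coprime to the augmentation generator $T := \gamma - 1$ of $\La$ (for $\gamma$ a topological generator of $\Gamma$). The standard equivalence ``$T \nmid f_M$ if and only if $M^\Gamma$ and $M_\Gamma$ are both finite'' for a finitely generated torsion $\La$-module $M$ (proved by reducing via pseudo-isomorphism to the elementary quotients $\La/f\La$) then yields the finiteness of both $\Gamma$-invariants and $\Gamma$-coinvariants of $(N^i_\infty)^\vee$, and Pontryagin duality translates this into the desired finiteness for $N^i_\infty$.

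For $i=0$: \eqref{e:alg-arithm} identifies $N^0_\infty$ with a $\Gamma$-submodule of $A(\Kpinf)[p^\infty]$, so $(N^0_\infty)^\vee$ is a quotient of $A(\Kpinf)[p^\infty]^\vee$ and $f_{(N^0_\infty)^\vee}$ divides $\prod_i(1-\zeta_i^{-1}\Fr_q^{-1})$ by \eqref{e:fvee}. Reducing modulo $T$ (i.e.\ setting $\Fr_q = 1$) yields $\prod_i(1-\zeta_i^{-1})$: a vanishing factor would produce a nonzero eigenvector in $V_p A(\Kpinf)^\Gamma = V_p A(K)$, contradicting the (standard) finiteness of $A(K)[p^\infty]$. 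Hence $f_{(N^0_\infty)^\vee}$ is coprime to $T$. For $i=2$: Proposition \ref{p:N2trivial} together with \eqref{e:ftp} gives $f_{(N^2_\infty)^\vee}=\prod_i(1-\zeta_i^{-1}\Fr_q)$, and the same argument applied to $A^t$ --- whose $K$-rational $p^\infty$-torsion is equally finite, since $A$ and $A^t$ are isogenous --- shows that this is also coprime to $T$.

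The rank identity \eqref{e:ranks} then follows formally. Pontryagin dualizing \eqref{ses0} produces
$$0 \lr \bigl((N^1_\infty)^\vee\bigr)_\Gamma \lr (N^1_0)^\vee \lr \bigl((N^0_\infty)^\vee\bigr)^\Gamma \lr 0,$$
whose rightmost term is finite by the case $i=0$ just treated; this yields the first equality. For the second, \eqref{e:alg-arithm0} shows that $N^1_0$ differs from $\Sel_{p^\infty}(A/K)$ only by finite subquotients (since both $A(K)[p^\infty]$ and $(\oplus_{v\in Z}\M_v)[p^\infty]$ are finite), so $(N^1_0)^\vee$ and $X_p(A/K)$ share the same $\ZZ_p$-rank. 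The only mildly subtle step is invoking the characteristic-element criterion above for a general finitely generated torsion $\La$-module; everything else --- the formulas for the characteristic elements, the exact sequences, and the finiteness of $A(K)[p^\infty]$ for abelian varieties over the function field $K$ --- is either available earlier in the paper or is classical.
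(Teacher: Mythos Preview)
Your proof is correct, and it takes a genuinely different route from the paper's.

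The paper argues by direct rank computations. For $i=0$ it bounds $\rank_{\ZZ_p}\bigl((N^0_\infty)^\vee\bigr)_\Gamma$ by $\rank_{\ZZ_p}\bigl(A(K)[p^\infty]\bigr)^\vee=0$ via \eqref{e:alg-arithm}. For the middle term in \eqref{e:ranks} it works at the $\Kpinf$-level: from \eqref{e:alg-arithm} one has $0\to X_p(A/\Kpinf)\to(N^1_\infty)^\vee\to\M_\infty^\vee$, and after checking that $\bigl(\M_\infty^\vee\bigr)^\Gamma$ has $\ZZ_p$-rank zero, the control theorem \cite{tan10a} identifies $\rank_{\ZZ_p}X_p(A/\Kpinf)^\Gamma$ with $\rank_{\ZZ_p}X_p(A/K)$. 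Only then is $i=2$ handled, by computing $\rank_{\ZZ_p}(N^2_0)^\vee$ via the compact Selmer group \eqref{e:KT253} and feeding the already-established rank of $(N^1_\infty)_\Gamma$ into \eqref{ses1}.

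Your argument is more uniform: you treat $i=0$ and $i=2$ symmetrically through the explicit characteristic-element formulas \eqref{e:fvee}, \eqref{e:ftp} and Proposition~\ref{p:N2trivial}, reducing both to the single statement that no $\zeta_i$ equals $1$. This lets you bypass the control theorem entirely, and your derivation of \eqref{e:ranks} from the finite-level sequence \eqref{e:alg-arithm0} is more direct than the paper's detour through $\Kpinf$. The trade-off is that you lean on Proposition~\ref{p:N2trivial} and the results quoted from \cite{tan10b}, whereas the paper's argument for $i=2$ uses only the elementary identification \eqref{e:KT253} and the isogeny $A\sim A^t$; in that sense the paper's proof of this particular lemma is slightly more self-contained, even if longer.
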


\begin{proof} Recall that invariants and coinvariants of a $\La$-module have the same rank. For $i=0$, observe that we have, by \eqref{e:alg-arithm},
$$\rank\big((N^0_\infty)^\vee\big)_\Gamma=\rank\big((N^0_\infty)^\Gamma\big)^\vee\leq \rank\big(A_{p^\infty}(\Kpinf)^\Gamma\big)^\vee =\rank\big(A(K)[p^\infty]\big)^\vee=0\,.$$
From \eqref{e:alg-arithm} we get the exact sequence
$$0 \lr X_p(A/\Kpinf) \lr (N^1_\infty)^\vee \lr \M_\infty[p^\infty]^\vee.$$
Keeping the notations of \S\ref{ss:bsdinfty}, we have
$$\rank_{\ZZ_p}\big((\M_\infty)^\vee\big)^\Gamma =\rank_{\ZZ_p}\big(\oplus_{w|v}\big(B_w(k(w))[p^\infty]^\vee\big)^\Gamma\big)=0$$
where the first equality results from the facts that $\Phi_w$ is a finite group and $(T_w)[p^\infty]$ is trivial. Therefore we have
$$\rank_{\ZZ_p}\left((N^1_\infty)^\vee\right)^\Gamma = \rank_{\ZZ_p}X_p(A/\Kpinf)^\Gamma = \rank_{\ZZ_p} X_p(A/K) \,,$$
where the last equality is a consequence of the control theorem \cite[Theorem 4]{tan10a}. This yields \eqref{e:ranks}.
On the other hand, for the group $(N^2_\infty)^\Gamma$, we have
$$\rank_{\ZZ_p}(N^2_0)^\vee=\rank_{\ZZ_p}\Sel_{\ZZ_p}(A^t/K)\,,$$
by \eqref{e:N2compactSel}. On the other hand, we have
$$\rank_{\ZZ_p}\Sel_{\ZZ_p}(A^t/K)=\rank A^t(K)+\rank_{\ZZ_p}\Sha_{p^\infty}(A^t/K)^\vee=\rank_{\ZZ_p}X_p(A^t/K),$$
In particular,
$$\rank_{\ZZ_p}(N^2_0)^\vee = \rank_{\ZZ_p}X_p(A^t/K) = \rank_{\ZZ_p}X_p(A/K)\,,$$
where the second equality results from the existence of an isogeny between $A$ and $A^t$. Hence, from the short exact sequence \eqref{ses1} we deduce $\rank_{\ZZ_p}((N^2_\infty)^\Gamma)^\vee=0$.
\end{proof}

Let $\tau\in \coh^1_{\mathrm{\acute et}}(\FF_p,\ZZ_p)=\Hom_{cont}(\Gal(\bar{\FF}_p/\FF_p),\ZZ_p)$ be the element which sends the arithmetic Frobenius to 1. By \cite[Lemma 6.9]{KT03} the composed map
$$\cup'\colon N_0^1\lr M^1_{1,0}\buildrel{{\bf 1}}\over\lr M^1_{2,0}\lr N_0^2$$
coincides up to sign with $\tau\cup$, the cup product by the image of $\tau$ in $\coh^1_{\mathrm{fl}}(X, \ZZ_p)$. Using \cite[Proposition 6.5]{Mil86b}, we can prove the following result.

\begin{lemma}\label{cup}
The composed map
$$\begin{CD} \cup\colon N^1_0\buildrel{\beta}\over\lr (N^1_\infty)^\Gamma @>{g_{N^1_\infty}}>> (N^1_\infty)_\Gamma \buildrel{\gamma}\over\lr N^2_0 \end{CD}$$
coincides (up to the sign) with the map
$$\tau\cup=\cup'\colon N^1_0\, \lr N^2_0.$$
\end{lemma}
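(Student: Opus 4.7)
The plan is to decompose the statement into two independent identifications, each already well understood in the literature. The first is a purely group-theoretic input: since $\Gamma\cong\ZZ_p$ has cohomological dimension one, for any discrete torsion $\Gamma$-module $M$ one has $H^0(\Gamma,M)=M^\Gamma$ and $H^1(\Gamma,M)=M_\Gamma$, and the tautological class $\tau\in H^1(\Gamma,\ZZ_p)=\Hom_{\mathrm{cont}}(\Gamma,\ZZ_p)$ sending $\Fr_q\mapsto 1$ gives, by the explicit formula for cup product in the bar resolution, a map $H^0(\Gamma,M)\to H^1(\Gamma,M)$ which coincides with the natural projection $g_M\colon M^\Gamma\hookrightarrow M\twoheadrightarrow M_\Gamma$. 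So I would first spell out that this identifies $g_{N^1_\infty}$ with $\tau\cup$ on the $E_2$-page of the Hochschild-Serre spectral sequence \eqref{e:HochSerre}.

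Second, I would identify $\beta$ and $\gamma$ of \eqref{ses0} and \eqref{ses1} with the edge morphisms of \eqref{e:HochSerre} obtained from the five-term exact sequence (here collapsed into two 3-term sequences because $\mathrm{cd}(\Gamma)=1$), using the compatibility of the Hochschild-Serre spectral sequence for the pro-\'etale cover $\pi\colon C_\infty\to C$ with that of the classifying topos of $\Gamma$. With $\beta$, $g_{N^1_\infty}$ and $\gamma$ so reinterpreted, the composition
$$\gamma\circ g_{N^1_\infty}\circ\beta\colon N^1_0\lra H^0(\Gamma,N^1_\infty)\overset{\tau\cup}{\lra} H^1(\Gamma,N^1_\infty)\lra N^2_0$$
is precisely the composition of edge morphisms with the cup product by the pullback of $\tau$ from the one-point topos to $C_{\mathrm{\acute{e}t}}$. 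The naturality of the cup product with respect to the spectral sequence edges (this is the content invoked in [Proposition~6.5]{Mil86b}) then shows that, up to a sign, this composition equals the cup product on $N^\bullet_0$ by the image of $\tau$ under $H^1(\FF_p,\ZZ_p)\to H^1_{\mathrm{fl}}(C,\ZZ_p)$ obtained via pullback along $C\to \Spec(\FF_p)$.

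Finally, I would combine this with \cite[Lemma 6.9]{KT03}, which identifies $\cup'=\tau\cup$ with the composite $N^1_0\to M^1_{1,0}\overset{\bf 1}{\to}M^1_{2,0}\to N^2_0$; this settles the coincidence (up to sign) with the map $\cup'$ as stated.

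The main obstacle will be the compatibility checks: verifying that the edge morphisms of the Hochschild-Serre sequence for $\pi^*\cS_D\otimes^{\mbb{L}}\QQ_p/\ZZ_p$ really are the maps $\beta$ and $\gamma$ arising from \eqref{ses0}, \eqref{ses1}, and checking the sign conventions inherited from the mapping fibre defining $\cS_D$ together with those in the cup product pairing on the derived category of $C_{\mathrm{\acute{e}t}}$. The formalism of [Proposition~6.5]{Mil86b} adapts to our setting because $\tau$ classifies exactly the $\Gamma$-torsor $\pi\colon C_\infty\to C$, but one must be careful to work in the pro-\'etale (or equivalently flat) site in order to make sense of $\tau\in H^1_{\mathrm{fl}}(C,\ZZ_p)$ acting on the syntomic complex $\cS_D$.
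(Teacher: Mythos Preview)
Your proposal is correct and follows essentially the same route as the paper, which in fact gives no detailed argument: the paper simply states that the lemma follows from \cite[Proposition~6.5]{Mil86b}, having already recorded (via \cite[Lemma~6.9]{KT03}) the identification $\cup'=\tau\cup$ just before the statement. Your write-up unpacks exactly what that reference to Milne means---namely, that $\beta$ and $\gamma$ are the Hochschild--Serre edge maps, that $g_{N^1_\infty}$ is cup product by the canonical generator of $H^1(\Gamma,\ZZ_p)$, and that Milne's compatibility result assembles these into cup product by the pullback of $\tau$ on $N^\bullet_0$---so there is nothing to add beyond noting that the paper's $\tau$ is normalized over $\FF_p$ (arithmetic Frobenius $\mapsto 1$) rather than over $\FF$, which is harmless for an ``up to sign'' statement.
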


\subsubsection{The height pairing}\label{iwasawapairing} We denote 
$$\tilde{h}_{A/K}: A(K)\times A^t(K)\to \RR,$$ the N\'eron-Tate height pairing (see for example \cite{lan83}). Let $e_1,...e_r$ be elements of $A(K)$ which form a $\ZZ$-basis of $A(K)/A(K)_{tor}$ and let $e_1^*,...,e_r^*$ be elements of $A^t(K)$ which form a $\ZZ$-basis of $A^t(K)/A^t(K)_{tor}$. Then
$$Disc(\tilde h_{A/K}):=|\det(\tilde h_{A/K}(e_i,e_j^*)_{i,j})|\in\RR$$
is independent of the choices of basis and we call it the discriminant of the height pairing. It is known that $Disc(\tilde h_{A/K})\neq 0$. We write
$$Disc(h_{A/K}) = \log(p)^{-r} Disc(\tilde h_{A/K}),$$
with $r=\rank(A(K))$.

Consider the quotient category $(ab)/(fab)$, where $(ab)$ is the category of abelian groups and $(fab)$ the category of finite abelian groups. Let $\theta$ be the composed map in $(ab)/(fab)$ defined by:
\begin{equation} \label{e:relhp} \begin{CD}  A(K) \otimes \QQ_p/\ZZ_p @>{\alpha}>>  N^1_0 \buildrel{\beta}\over\lr (N^1_\infty)^\Gamma @>{g_{N^1_\infty}}>> (N^1_\infty)_\Gamma \\
@V{\theta}VV &&    @V{\gamma}VV\\
\Hom\big(A^t(K)/A^t_{p^\infty}(K), \QQ_p/\ZZ_p\big) @<{v^{-1}}<< \Hom(A^t(K), \QQ_p/\ZZ_p) @<{u}<< N^2_0
\end{CD}\end{equation}
where: \begin{enumerate}
\item the map $\alpha\colon A(K)\otimes \QQ_p/\ZZ_p \rightarrow N_0^1$ is the canonical morphism in $(ab)/(fab)$ coming from $A(K)\otimes \QQ_p/\ZZ_p \to \Sel_{p^\infty}(A/K)$, by \eqref{e:alg-arithm0};
\item the map $u\colon N^2_0\to \Hom(A^t(K), \QQ_p/\ZZ_p)$ is the map constructed by using the isomorphism \eqref{e:KT253} and the natural map $A^t(K)\otimes \ZZ_p\to \Sel_{\ZZ_p}(A^t/K)$;
\item the map $v$ is induced by the quotient map (which is an isomorphism, since we are in the quotient category $(ab)/(fab)$).
\end{enumerate}
Thanks to Lemma \ref{cup} and \cite[3.3.6.2 and \S6.8]{KT03}, $\theta$ coincides (up to sign) with the map induced by $h_{A/K}$ in $(ab)/(fab)$. In particular, since the N\'eron-Tate height pairing is non-degenerate, $\theta$ is a quasi-isomorphism (i.e., an isomorphism in the quotient category).

\subsubsection{Computation of the Euler characteristic} If $f$ is a quasi-isomorphism of abelian groups, we denote $char(f):=|\Ker(f)|/|\Coker(f)|$. Since this characteristic is multiplicative, \eqref{e:relhp} gives
$$char(\alpha)\cdot char(\beta)\cdot char(g_{N^1_\infty})\cdot char(\gamma)\cdot char(u)\cdot char(v)^{-1} = char(\theta) \equiv_p Disc(h_{A/K})$$
where $ \equiv_p$ means ``$\equiv$ mod $\ZZ_p^{\times}$''. If we assume that $A/K$ has semistable reduction and the Tate-Shafarevich group of $A/K$ is finite, then we have: \label{euler}
$$char(\alpha)=\frac{|A_{p^\infty}(K)|}{|\Sha_{p^\infty}(A/K)|\cdot |\M_0[p^\infty]|\cdot|N^0_0|}$$
by \eqref{e:alg-arithm0};
$$char(v)\equiv_p|A_{p^\infty}^t(K)|^{-1}$$
because $v\colon \Hom\big(A^t(K)/A^t_{p^\infty}(K), \QQ_p/\ZZ_p\big) \rightarrow \Hom(A^t(K), \QQ_p/\ZZ_p)$ is injective with cokernel $A^t_{p^\infty}(K)^\vee$;
$$char(\beta)=|(N^0_\infty)_\Gamma|$$
and
$$char(\gamma)=|(N^2_\infty)^\Gamma|^{-1}\,,$$
by \eqref{ses0} and \eqref{ses1}, using Lemma \ref{N02}; and $char(u)=1$ since the Tate-Shafarevich group is assumed to be finite.

%%%%%%%%%%%%%%%%%%%%%%%%%%%%%%%%%%%%%%%%%%%%%%%%%%%%%%%%%%%%%%%%%%%%%%%%%%%%%
                              % computation of euler char
%%%%%%%%%%%%%%%%%%%%%%%%%%%%%%%%%%%%%%%%%%%%%%%%%%%%%%%%%%%%%%%%%%%%%%%%%%%%%
The following result can be seen as a geometric analogue of the conjecture of Mazur-Tate-Teitelbaum (\cite{mtt86}):
\begin{mytheorem}\label{euchar} Assume that $A/K$ has semistable reduction. Then
\begin{equation}\label{e:rankss}
ord\big(\cL_{A/\Kpinf}\big)=ord_{s=1}\big(L_Z(A,s)\big)\geq\rank_\ZZ A(K)\,.
\end{equation}
If moreover $A/K$ verifies the Birch and Swinnerton-Dyer Conjecture, the inequality above becomes an equality and
\begin{equation}\label{e:euchar}
|L(\cL_{A/\Kpinf})|_p^{-1}\equiv  c_{BSD}\cdot |(N_\infty^2)_\Gamma| \mod \ZZ_p^\times \,,
\end{equation}
where $c_{BSD}$ is the leading coefficient at $s=1$ of $L_Z(A,s)$.
\end{mytheorem}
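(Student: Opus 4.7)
The plan is to specialize Theorem \ref{chi-Iwasawa-BSD} to the trivial character $\omega = 1$, combine it with the height-pairing analysis of \S\ref{iwasawapairing}, and compare the result with the Birch--Swinnerton-Dyer formula for $A/K$.

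For the rank identity and the inequality, Lemma \ref{N02} makes the groups $((N^0_\infty)^\vee)^\Gamma$ and $((N^2_\infty)^\vee)^\Gamma$ finite, so in the order formula of Theorem \ref{chi-Iwasawa-BSD} only the $i=1$ summand survives. Using the equality of $\ZZ_p$-ranks of invariants and coinvariants for a finitely generated $\La$-module, together with \eqref{e:ranks}, this reduces to
$$ord(\cL_{A/\Kpinf}) = \rank_{\ZZ_p}((N^1_\infty)^\vee)^\Gamma = \rank_{\ZZ_p} X_p(A/K).$$
Since $A(K)\otimes \QQ_p/\ZZ_p$ embeds into $\Sel_{p^\infty}(A/K)$, the right-hand side is bounded below by $\rank_\ZZ A(K)$, with equality iff $\Sha_{p^\infty}(A/K)$ is finite. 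The analytic identity $ord(\cL_{A/\Kpinf}) = ord_{s=1} L_Z(A,s)$ comes from comparing the explicit expression $\cL_{A/\Kpinf} = \prod_i\det(id-\Phi_i, P^i_\infty)^{(-1)^{i+1}}$ with the analogous factorization $L_Z(A,s) = L(U, D^\dagger, q^{-s}) = \prod_i\det(1-q^{-s}\varphi_i)^{(-1)^{i+1}}$ of \S\ref{ss:twistedHW}, using \eqref{e:PHrig}--\eqref{e:LD+=LA} and the interpolation \eqref{e:GIMC2} at $\omega=1$ to match the zeros on both sides.

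Assume now BSD. Then $\Sha(A/K)$ is finite and the inequality above becomes an equality. For the leading term, the corresponding statement of Theorem \ref{chi-Iwasawa-BSD} combined with \eqref{e:EulerMdual} yields
$$|L(\cL_{A/\Kpinf})|_p^{-1} = q^{-\delta - g(\deg(Z) + \kappa - 1)}\cdot \frac{char(\Gamma, N^0_\infty)\cdot char(\Gamma, N^2_\infty)}{char(\Gamma, N^1_\infty)}.$$
The Hochschild--Serre sequences \eqref{ses0} and \eqref{ses1} give $char(\Gamma, N^0_\infty) = |(N^0_\infty)_\Gamma|/|N^0_0|$ and $char(\Gamma, N^2_\infty) = |(N^2_\infty)_\Gamma|/|(N^2_\infty)^\Gamma|$. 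For the middle factor, Lemma \ref{cup} identifies the composite $\gamma\circ g_{N^1_\infty}\circ\beta$ with $\tau\cup$, and by \S\ref{iwasawapairing} the enveloping map $\theta$ of \eqref{e:relhp} coincides in the quotient category $(ab)/(fab)$ with the N\'eron--Tate height pairing, whence $char(\theta) \equiv_p Disc(h_{A/K})$. Plugging in the values of $char(\alpha), char(\beta), char(\gamma), char(u), char(v)$ listed in \S\ref{su:Eulerchar} and solving for $char(g_{N^1_\infty}) = char(\Gamma, N^1_\infty)^{-1}$, the factors $|N^0_0|$, $|(N^0_\infty)_\Gamma|$ and $|(N^2_\infty)^\Gamma|$ all cancel, leaving an expression in terms of $Disc(h_{A/K})$, $|\Sha_{p^\infty}(A/K)|$, $|\M_0[p^\infty]|$, $|A_{p^\infty}(K)|^{-1}$, $|A^t_{p^\infty}(K)|^{-1}$, the geometric period $q^{-\delta-g(\deg(Z)+\kappa-1)}$, and the surviving factor $|(N^2_\infty)_\Gamma|$. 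The BSD formula for $A/K$ identifies the first six of these with the $p$-part of $c_{BSD}$, yielding \eqref{e:euchar}.

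The main obstacle lies in the third paragraph: one must carefully track all finite-group contributions arising from the Hochschild--Serre sequences and the diagram \eqref{e:relhp}, verify the claimed cancellations, and match the geometric period factor $q^{-\delta-g(\deg(Z)+\kappa-1)}$ together with the local factor $|\M_0[p^\infty]|$ against the classical Tamagawa-times-period contribution in $c_{BSD}$. The semistable reduction hypothesis is essential at this point, entering through the extension $0\to T_v\to \A^0_v\to B_v\to 0$ of \S\ref{ss:bsdinfty}, which allows the decomposition of $\M_v$ into a toric Tamagawa piece and an abelian-variety piece.
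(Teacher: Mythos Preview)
Your outline for the leading-term computation under BSD is essentially the paper's own argument: Theorem \ref{chi-Iwasawa-BSD} at $\omega=\mathbf 1$, the Hochschild--Serre identifications \eqref{ses0}--\eqref{ses1}, the height-pairing decomposition of $char(\theta)$ from \S\ref{iwasawapairing}, and the Tamagawa/period identity \eqref{e:tamagawa}--\eqref{e:tamagawa3}. The cancellations you describe are exactly those that occur, and the surviving factor $|(N^2_\infty)_\Gamma|$ appears for the reason you indicate.

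There is, however, a genuine gap in your treatment of the \emph{unconditional} part \eqref{e:rankss}. You invoke the order formula of Theorem \ref{chi-Iwasawa-BSD} to obtain
\[
ord(\cL_{A/\Kpinf})=\rank_{\ZZ_p}\big((N^1_\infty)^\vee\big)^\Gamma=\rank_{\ZZ_p}X_p(A/K),
\]
but that theorem has a standing hypothesis: each $(N^i_\infty)^\vee$ must have \emph{finite} generalized $\Gamma$-Euler characteristic. For $i=0,2$ this is supplied by Lemma \ref{N02}, but for $i=1$ it amounts to $g_{N^1_\infty}$ being a quasi-isomorphism, and this is established in the paper only \emph{after} assuming BSD (via the nondegeneracy of the height pairing in \eqref{e:relhp}, which forces $\alpha,\beta,\gamma,u,v$ and hence $g_{N^1_\infty}$ to be quasi-isomorphisms). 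Without that hypothesis, Lemma \ref{recall} gives only the inequality $ord(f_{(N^1_\infty)^\vee})\geq\rank_{\ZZ_p}((N^1_\infty)^\vee)^\Gamma$, so your displayed equality is not yet justified.

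The paper avoids this circularity by proving the unconditional equality $ord(\cL_{A/\Kpinf})=ord_{s=1}L_Z(A,s)$ \emph{directly}, without passing through Theorem \ref{chi-Iwasawa-BSD}: one shows (formula \eqref{e:detfr}) that $\cL_{A/\Kpinf}=\prod_i\prod_j(1-\alpha_{ij}\Fr_q)^{(-1)^{i+1}}$ with $\alpha_{ij}$ the eigenvalues of $p^{-1}F_{i,0}$ on $P^i_0[\tfrac1p]$, so that $ord(\cL_{A/\Kpinf})$ counts the $\alpha_{ij}$ equal to $1$, and this is identified with $ord_{s=1}L_Z(A,s)$ via \cite[3.5.2]{KT03}. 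The inequality $\geq\rank_\ZZ A(K)$ is then quoted from \cite[\S3.5]{KT03}. Your sketch of the analytic identity (``comparing the explicit expression \ldots with the analogous factorization'') is heading in this direction but does not replace the illicit use of Theorem \ref{chi-Iwasawa-BSD}; you should reorganize so that the eigenvalue argument carries the full weight of the unconditional statement, and reserve Theorem \ref{chi-Iwasawa-BSD} for the BSD part, where its hypothesis has been verified.
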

\begin{proof}
The last inequality in \eqref{e:rankss} has been proved in \cite[\S3.5]{KT03}. We show that the analytic rank is equal to the rank of our $p$-adic $L$ function $\cL_{A/\Kpinf}$. First, note that the operator $(\varphi_{i,0}\otimes\Fr_q)^\vee$ on $P^i_\infty$ is induced by the operator  $(p^{-1}F_{i,0}\otimes\Fr_q)^\vee$ on $(P^i_0[\frac{1}{p}]\otimes \QQ_{p,\infty})^\vee$. Moreover, using \eqref{exact1}, we observe that we have an injection of $\QQ_p[[\Gamma]]$-modules
$$P^i_\infty\otimes_{\La[\frac{1}{p}]}\QQ_p[[\Gamma]]\hookrightarrow (P^i_0[p^{-1}]\otimes \QQ_{p,\infty})^\vee \simeq\QQ_p[[\Gamma]]^{r_i}$$
with $r_i:=\dim_{\QQ_p}(P^i_0[\frac{1}{p}])$ and where the operator $(p^{-1}F_{i,0}\otimes\Fr_q)^\vee$ on the left-hand side corresponds to the operator $\Fr_q\cdot p^{-1}F_{i,0}$ on the right-hand side (as shown in Lemma \ref{induce}).
Also, Lemma \ref{linalg1} shows that $P^i_\infty$ is a free $\La[\frac{1}{p}]$-module of rank equal to the $\ZZ_p$-corank of $M^i_{2,0}$, which, by Lemma \ref{cofinite}, is precisely $r_i$.
Hence, the $p$-adic $L$-function $\cL_{A/\Kpinf}$ can be written
\begin{equation} \label{e:detfr}
\prod_{i=0}^2 {\det}_{\QQ_p[[\Gamma]]}\left(id-\Fr_q\cdot p^{-1}F_{i,0},P^i_\infty\otimes\QQ_p[[\Gamma]]\right)^{(-1)^{i+1}}=\prod_{i=0}^2(\prod_{j=1}^{r_i}(1-\alpha_{ij}\Fr_q))^{(-1)^{i+1}},
\end{equation}
where the $\alpha_{ij}$'s are the eigenvalues (in ${\bar \QQ}_p$) of $p^{-1}F_{i,0}$. In particular, $ord_{\bf 1}(\prod_{j=1}^{r_i}(1-\alpha_{ij}\Fr_q))$ is the number of $\alpha_{ij}$ equal to $1$ (note that $1-\lambda\Fr_q$ has order 0 if $\lambda\neq 1$ and order 1 else), that is, the multiplicity of the eigenvalue 1 of the operator $p^{-1}F_{i,0}$ and the assertion follows from \cite[3.5.2]{KT03}.

We proceed to the proof of the second assertion of the theorem.
By Lemma \ref{N02}, $(N^0_\infty)^\vee$ and $(N^2_\infty)^\vee$ have finite generalized $\Gamma$-characteristic.
For $(N^1_\infty)^\vee$, remark that under our assumption $\alpha$, $\beta$ and $v$ are quasi-isomorphisms while $u$ and $\gamma$ are isomorphisms. Therefore, the map $g_{N^1_\infty}$ is a quasi-isomorphism and so is its Pontryagin dual, $g_{(N^1_\infty)^\vee}$. By Theorem \ref{chi-Iwasawa-BSD} and Lemma \ref{N02}, we have
$$ord_{\bf 1}(\cL_{A/\Kpinf})=\rank_{\ZZ_p}X_p(A/K)=\rank_\ZZ A(K)\,,$$
since we have assumed that the Tate-Shafarevich group of $A/K$ is finite. The last equality $\rank_\ZZ A(K)=ord_{s=1}L_Z(A,s)$ follows from the main theorem of \cite{KT03}. The same theorem also proves that if $\alpha_v$ and $\mu_v$ are the Haar measure defined in \S\ref{ss:bsdinfty}, then
$$c_{BSD}=\frac{|\Sha(A/K)|\cdot Disc(h_{A/K})}{|A(K)_{tor}|\cdot|A^t(K)_{tor}|}\cdot\mu(Lie({\cal A})( \mathbb{A}_K)/Lie({\cal A})( K))^{-1}\cdot \prod_{v\in Z}\alpha_v(A(K_v))\,.$$
Replacing the values above and applying \eqref{e:tamagawa} and \eqref{e:tamagawa3} one gets
$$ c_{BSD} \equiv_p q^{-g(\deg(Z)+\kappa-1)-\delta}\cdot\frac{|(N^0_\infty)_\Gamma|\cdot char(g_{N^1_\infty})}{|N^0_0|\cdot|(N^2_\infty)^\Gamma|}\,. $$
%$$ c_{BSD} \equiv_p \frac{|(N^0_\infty)_\Gamma|\cdot char(g_{N^1_\infty})}{|N^0_0|\cdot|(N^2_\infty)^\Gamma|}\cdot\frac{|L^0_0|}{|L^1_0|}. $$

On the other hand, by Theorem \ref{chi-Iwasawa-BSD},
$$|\pounds_{\bf 1}(\cL_{A/\Kpinf})|_p^{-1}=q^{-g(\deg(Z)+\kappa-1)-\delta}\cdot\frac{char(\Gamma,(N^1_\infty)^\vee)}{char(\Gamma,(N^0_\infty)^\vee)\,char(\Gamma,(N^2_\infty)^\vee)}$$
and so the result follows remembering \eqref{e:EulerMdual} and observing that if the $\La$-module $M^\Gamma$ has finite cardinality then $char(\Gamma,M)=|M_\Gamma|/|M^\Gamma|$ (note also that $(N^0_\infty)^\Gamma=N^0_0$).

\end{proof}

\end{document}